\theoremstyle{plain}
\newtheorem{theorem}{Theorem}[section]
\newtheorem*{theorem*}{Theorem}
\newtheorem{lemma}[theorem]{Lemma}
\newtheorem{proposition}[theorem]{Proposition}
\newtheorem*{proposition*}{Proposition}
\newtheorem{corollary}[theorem]{Corollary}
\newtheorem*{corollary*}{Corollary}
\newtheorem{remark}[theorem]{Remark}
\newtheorem{remark*}{Remark}
\newtheorem{defn*}{Definition}
\newtheorem{example}[theorem]{Example}
\let\smash=\wedge
\let\iso=\cong
\let\tensor=\otimes
\let\directsum=\oplus
\let\bigdirectsum=\bigoplus
\def\minus{\!\smallsetminus\!}
\def\colim{\operatornamewithlimits{colim}}
\def\hocolim{\operatornamewithlimits{hocolim}}
\def\hofib{\operatornamewithlimits{hofib}}
\def\cone{\operatorname{cone}}
\def\pr{\operatorname{pr}}
\def\Char{\operatorname{char}}
\def\AA{\mathbb{A}}
\def\PP{\mathbb{P}}
\def\ZZ{\mathbb{Z}}
\def\QQ{\mathbb{Q}}
\def\RR{\mathbb{R}}
\def\FF{\mathbb{F}}
\def\GL{\mathbb{GL}}
\def\op{\mathrm{op}}
\def\Spec{\mathrm{Spec}}
\def\units{\times}
\def\Hom{\operatorname{Hom}}
\def\id{\mathrm{id}}
\def\Id{\mathrm{Id}}
\def\Ker{\mathrm{Ker}}
\def\Img{\mathrm{Im}}
\def\tr{\mathrm{tr}}
\def\Sm{\mathrm{Sm}}
\def\DM{\mathrm{DM}}
\def\eff{\mathrm{eff}}
\def\sSet{\mathbf{sSet}}
\def\Spt{\mathbf{Spt}}
\def\OO{\mathcal{O}}
\newcommand{\KGL}{\mathbf{KGL}}
\newcommand{\MGL}{\mathbf{MGL}}
\newcommand{\can}{\mathrm{can}}
\newcommand{\KT}{\mathbf{KT}}
\newcommand{\KQ}{\mathbf{KQ}}
\newcommand{\MZZ}{\mathbf{MZ}}
\newcommand{\Mmu}{\mathbf{M}\mu}
\newcommand{\Mnu}{\mathbf{M}\nu}
\newcommand{\Sq}{\mathsf{Sq}}
\newcommand{\Qop}{\mathsf{Q}}
\newcommand{\One}{\mathbf{1}}
\newcommand{\SHH}{\mathbf{SH}}
\newcommand{\E}{\mathsf{E}}
\newcommand{\D}{\mathsf{D}}
\newcommand{\F}{\mathsf{F}}
\newcommand{\f}{\mathsf{f}}
\newcommand{\s}{\mathsf{s}}
\newcommand{\cd}{\mathsf{cd}}
\newcommand{\vcd}{\mathsf{vcd}}
\newcommand{\sliced}{\mathbf{d}}
\begin{document}

\title{Slices of hermitian $K$-theory and \\ Milnor's conjecture on quadratic forms}
\author{Oliver R{\"o}ndigs \;\;\; Paul Arne {\O}stv{\ae}r}
\date{\today}
\maketitle

\begin{abstract}
We advance the understanding of $K$-theory of quadratic forms by computing the slices of the motivic spectra 
representing hermitian $K$-groups and Witt-groups.
By an explicit computation of the slice spectral sequence for higher Witt-theory, 
we prove Milnor's conjecture relating Galois cohomology to quadratic forms via the filtration of the Witt ring by its fundamental ideal.
In a related computation we express hermitian $K$-groups in terms of motivic cohomology.
\end{abstract}
\tableofcontents

\newpage

\section{Introduction}

Suppose that $F$ is a field of characteristic $\Char(F)\neq 2$.
In \cite{Milnor} the Milnor $K$-theory of $F$ is defined in terms of generators and relations by 
\[
K^{M}_{\ast}(F)
=
T^{\ast}F^{\times}/(a\otimes (1-a));
\,\, a\neq 0,1.
\]
Here $T^{\ast}F^{\times}$ is the tensor algebra of the multiplicative group of units $F^{\times}$. 
In degrees zero, one and two these groups agree with Quillen's $K$-groups, but for higher degrees they differ in general. 
Milnor \cite{Milnor} proposed two conjectures relating $k^{M}_{\ast}(F)=K^{M}_{\ast}(F)/2K^{M}_{\ast}(F)$ to the mod-$2$ Galois cohomology ring $H^{\ast}(F;\ZZ/2)$ and the 
graded Witt ring $GW_{\ast}(F)=\oplus_{q\geq 0}I(F)^{q}/I(F)^{q+1}$ for the fundamental ideal $I(F)$ of even dimensional forms, via the two homomorphisms:
\begin{diagram}
&& k^{M}_{\ast}(F) \\
& \ldTo^{s^{F}_{\ast}} && \rdTo^{h^{F}_{\ast}} \\
GW_{\ast}(F) &&&& H^{\ast}(F;\ZZ/2)
\end{diagram}

The solutions of the Milnor conjectures on Galois cohomology \cite{Voevodsky:Z/2} 
-- $h^{F}_{\ast}$ is an isomorphism -- 
and on quadratic forms \cite{Orlov-Vishik-Voevodsky} 
-- $s^{F}_{\ast}$ is an isomorphism -- 
are two striking applications of motivic homotopy theory.
For background and influence of these conjectures and also for the history of their proofs we refer to \cite{MR1215959}, \cite{MR1627118}, \cite{MR1627119}, \cite{MR1432056}, 
\cite{MR2841244}, \cite{MR1600334}, \cite{MR1769021}, \cite{MR1700298}.
In this paper we give an alternate proof of Milnor's conjecture on quadratic forms by explicitly computing the slice spectral sequence for the higher Witt-theory spectrum $\KT$. 
Our method of proof applies also to smooth semilocal rings containing a field of characteristic zero.
\vspace{0.1in}

Let $X\in\Sm_{F}$ be a smooth scheme of finite type over a field $F$.
We refer to \cite{GraysonKhandbook} for a survey of the known constructions of the first quadrant convergent spectral sequence relating 
motivic cohomology to algebraic $K$-theory
\begin{equation}
\label{equation:algebraicktheoryss}
\MZZ^{\star}(X)
\Longrightarrow
\KGL_{\ast}(X).
\end{equation}
From the viewpoint of motivic homotopy theory \cite{Voevodsky:open}, 
the problem of constructing (\ref{equation:algebraicktheoryss}) reduces to identifying the slices $\s_{q}(\KGL)$ of the motivic spectrum 
$\KGL$ representing algebraic $K$-theory.
Voevodsky introduced the slice spectral sequence and conjectured that 
\begin{equation}
\label{equation:algebraicktheoryslices}
\s_{q}(\KGL)
\cong
\Sigma^{2q,q}\MZZ.
\end{equation}
The formula (\ref{equation:algebraicktheoryslices}) was proven for fields
of characteristic zero by 
Voevodsky \cite{Voevodsky:motivicss}, 
\cite{Voevodsky:zero-slice}, and (invoking different methods) for perfect fields by Levine \cite{Levine:slices}.
By base change the same holds for all fields.
\vspace{0.1in}

When $\Char(F)\neq 2$ we are interested in the analogues of (\ref{equation:algebraicktheoryss}) and (\ref{equation:algebraicktheoryslices}) 
for the motivic spectra $\KQ$ and $\KT$ representing hermitian $K$-groups and Witt-groups on $\Sm_{F}$, 
respectively \cite{Hornbostel}.
Theorem \ref{thm:slices-ko} shows the slices of hermitian $K$-theory are given by infinite wedge product decompositions
\begin{equation}
\label{equation:hermitianktheoryslices}
\s_{q}(\KQ)
\cong
\begin{cases}
\Sigma^{2q,q}\MZZ
\vee  
\bigvee_{i<\frac{q}{2}}\Sigma^{2i+q,q}\MZZ/2 & q\equiv 0\bmod 2 \\
\bigvee_{i<\frac{q+1}{2}}\Sigma^{2i+q,q}\MZZ/2 & q\equiv 1\bmod 2. \\
\end{cases}
\end{equation}
Moreover,
in Theorem \ref{thm:slices-witt-theory} we compute the slices of higher Witt-theory, 
namely  
\begin{equation}
\label{equation:wittheoryslices}
\s_{q}(\KT)
\cong
\bigvee_{i\in\ZZ}\Sigma^{2i+q,q}\MZZ/2.
\end{equation} 
The summand $\Sigma^{2q,q}\MZZ$ in \eqref{equation:hermitianktheoryslices} is detected by showing that $\s_{q}(\KGL)$ is a retract of $\s_{q}(\KQ)$ if
$q$ is even.
We deduce that $\s_{q}(\KQ)$ is a wedge sum of $\Sigma^{2q,q}\MZZ$ and some $\MZZ$-module, 
i.e., a motive, 
which we identify with the infinite wedge summand in (\ref{equation:hermitianktheoryslices}). 
Our first results show there is an additional ``mysterious summand'' $\Sigma^{2q,q}\Mmu$ of $\s_{q}(\KQ)$.
We show $\Mmu$ is trivial by using base change and the solution of the homotopy fixed point problem for hermitian $K$-theory of the
prime fields \cite{BK}, \cite{Friedlander}, 
cf.~\cite{BKSO}, \cite{Hu-Kriz-Ormsby}.
As conjectured in Hornbostel's foundational paper \cite{Hornbostel}, 
Theorem \ref{thm:cofiber-seq} shows there is a homotopy cofiber sequence relating the algebraic and hermitian $K$-theories:
\begin{equation}
\label{equation:Hopfsequence}
\Sigma^{1,1}\KQ
\rTo^{\eta}
\KQ
\rTo
\KGL
\end{equation}
The stable Hopf map $\eta$ is induced by the canonical map $\AA^{2}\smallsetminus\{0\}\rightarrow\PP^{1}$.
We show this over any finite dimensional regular noetherian base scheme $S$ equipped with the trivial involution and with $2$ invertible 
in its ring of regular functions, 
i.e., $\frac{1}{2}\in\Gamma(S,\mathcal{O}_{S})$. 
A closely related statement is obtained in \cite{Schlichting}.
The sequence (\ref{equation:Hopfsequence}) is employed in our computation of the slices of $\KQ$.
\vspace{0.1in}

The algebraic $K$-theory spectrum $\KGL$ affords an action by the stable Adams operation $\Psi^{-1}$. 
For the associated homotopy orbit spectrum $\KGL_{hC_{2}}$ there is a homotopy cofiber sequence
\begin{equation}
\label{equation:KGLorbitsKOKTsequence}
\KGL_{hC_{2}}
\rTo
\KQ
\rTo
\KT.
\end{equation}
In (\ref{equation:KGLorbitsKOKTsequence}), 
$\KGL_{hC_{2}}\rightarrow\KQ$ is induced by the hyperbolic map $\KGL\rightarrow\KQ$, 
while $\KQ\rightarrow\KT$ is the natural map from hermitian $K$-theory into the homotopy colimit of the tower
\begin{equation}
\label{equation:KQKTtower}
\KQ
\rTo^{\eta}
\Sigma^{-1,-1}\KQ
\rTo^{\Sigma^{-1,-1}\eta}
\Sigma^{-2,-2}\KQ
\rTo^{\Sigma^{-2,-2}\eta}
\cdots .
\end{equation}
We use the formulas $\s_{0}(\Psi^{-1})=\id$ and $\Sigma^{2,1}\Psi^{-1}=-\Psi^{-1}$ to identify the slices 
\begin{equation}
\label{equation:homotopyorbitktheoryslices}
\s_{q}(\KGL_{hC_{2}})
\cong
\begin{cases}
\Sigma^{2q,q}\MZZ
\vee  
\bigvee_{i\geq 0}\Sigma^{2(i+q)+1,q}\MZZ/2
&
q\equiv 0\bmod 2 \\
\bigvee_{i\geq 0} \Sigma^{2(i+q),q}\MZZ/2 
&
q\equiv 1\bmod 2. \\
\end{cases}
\end{equation}
By combining the slice computations in (\ref{equation:hermitianktheoryslices}) and (\ref{equation:homotopyorbitktheoryslices}) with the 
homotopy cofiber sequence in (\ref{equation:KGLorbitsKOKTsequence}) we deduce the identification of the slices of the Witt-theory spectrum 
$\KT$ in (\ref{equation:wittheoryslices}).
Alternatively, 
this follows from (\ref{equation:hermitianktheoryslices}), (\ref{equation:KQKTtower}), 
and Spitzweck's result that slices commutes with homotopy colimits \cite{Spitzweck:slices}.
\vspace{0.1in}

Our next goal is to determine the first differentials in the slice spectral sequences as maps of motivic spectra.
Because of the special form the slices of $\KQ$ and $\KT$ have, 
this involves the motivic Steenrod squares $\Sq^{i}$ constructed by Voevodsky \cite{Voevodsky:Z/2} and further elaborated on in \cite{HKPAO}.
According to (\ref{equation:wittheoryslices}) the differential
\[ 
\sliced_{1}^{\KT}(q)
\colon 
\s_{q}(\KT) 
\rTo
\Sigma^{1,0}\s_{q+1}(\KT) 
\]
is a map of the form
\[ 
\bigvee_{i\in\ZZ}\Sigma^{2i+q,q}\MZZ/2 
\rTo 
\Sigma^{2,1}\bigvee_{j\in\ZZ}\Sigma^{2j+q,q}\MZZ/2. 
\]
Let $\sliced_{1}^{\KT}(q,i)$ denote the restriction of $\sliced_{1}^{\KT}(q)$ to the $i$th summand $\Sigma^{2i+q,q}\MZZ/2$ of $\s_{q}(\KT)$. 
By comparing with motivic cohomology operations of weight one, 
it suffices to consider 
\[ 
\sliced_{1}^{\KT}(q,i)
\colon 
\Sigma^{2i+q,q}\MZZ/2 
\rTo 
\Sigma^{2i+q+4,q+1}\MZZ/2 
\vee
\Sigma^{2i+q+2,q+1}\MZZ/2
\vee
\Sigma^{2i+q,q+1}\MZZ/2. 
\]
In Theorem \ref{thm:diff-witt-theory} we show the closed formula
\begin{equation}
\label{equation:d1KTdifferentials}
\sliced_{1}^{\KT}(q,i) 
= 
\begin{cases} 
(\Sq^{3}\Sq^{1},\Sq^{2},0) & i-2q\equiv 0\bmod 4 \\
(\Sq^{3}\Sq^{1},\Sq^{2}+\rho\Sq^{1},\tau) & i-2q \equiv 2\bmod 4.
\end{cases} 
\end{equation}

The classes $\tau\in h^{0,1}$ and $\rho\in h^{1,1}$ are represented by 
$-1\in F$; here $h^{p,q}$ is shorthand for the mod-$2$ motivic 
cohomology group in degree $p$ and weight $q$.
We denote integral motivic cohomology groups by $H^{p,q}$.
This sets the stage for our proof of Milnor's conjecture on quadratic forms formulated in \cite[Question 4.3]{Milnor}.
For fields of characteristic zero this conjecture was shown by 
Orlov, Vishik and Voevodsky in \cite{Orlov-Vishik-Voevodsky},
and by Morel \cite{MorelCRAS}, \cite{MorelRSMUP} using different approaches.
\vspace{0.1in}

According to (\ref{equation:wittheoryslices}) the slice spectral sequence for $\KT$ fills out the upper half-plane.
A strenuous computation using (\ref{equation:d1KTdifferentials}), 
Adem relations, 
and the action of the Steenrod squares on the mod-$2$ motivic cohomology ring $h^{\star}$ of $F$ shows that it collapses.
We read off the isomorphisms
\begin{equation*}
E^{2}_{p,q}(\KT)
=
E^{\infty}_{p,q}(\KT)
\cong
\begin{cases}
h^{q,q}
&
p\equiv 0\bmod 4 \\
0
&
\text{otherwise}. \\
\end{cases}
\end{equation*}
To connect this computation with quadratic form theory, 
we show the spectral sequence converges to the filtration of the Witt ring $W(F)$ by the powers of the fundamental ideal $I(F)$ 
of even dimensional forms.
By identifying motivic cohomology with Galois cohomology for fields we arrive at the following result.

\begin{theorem}
\label{thm:main}
If $\Char(F)\neq 2$ the slice spectral sequence for $\KT$ converges and furnishes a complete set of invariants 
\[ 
\overline{e}^{q}_{F}
\colon
I(F)^{q}/I(F)^{q+1}
\rTo^\cong
H^{q}(F;\ZZ/2)
\]
for quadratic forms over $F$ with values in the mod-$2$ Galois cohomology ring.
\end{theorem}

If $X\in\Sm_F$ is a semilocal scheme and $F$ a field of characteristic zero, 
our computations and results extend to the Witt ring $W(X)$ with fundamental ideal $I(X)$ and the mod-$2$ motivic cohomology of $X$.
Our reliance on the Milnor conjecture for Galois cohomology \cite{Voevodsky:Z/2} can be replaced by Levine's generalized Milnor conjecture on \'etale cohomology 
of semilocal rings \cite{Levine:milnorK},  
as shown in \cite[\S2.2]{Hoobler} and \cite[Theorem 7.8]{Kerz}, 
cf.~\cite{Lichtenbaum}.  
We refer to the end of \S\ref{section:milnorconjecture} for further details.

In Section \ref{section:hk-groups} we perform computations in the slice spectral sequence for $\KQ$ in low degrees.
We formulate our computation of the second orthogonal $K$-group, 
and refer to the main body of the paper for the more complicated to state results on other hermitian $K$-groups.
\begin{theorem}
\label{thm:mainhermitian}
If $\Char(F)\neq 2$ there is a naturally induced isomorphism
\[ 
KO_{2}(F)
\rTo^{\cong}
\mathrm{ker}(\tau\circ\pr+\Sq^{2}\colon H^{2,2}\directsum h^{0,2}\rTo h^{2,3}).
\]
\end{theorem}

Throughout the paper we employ the following notation.

\begin{tabular}{l|l}
$S$ & finite dimensional regular and separated noetherian base scheme \\
$\Sm_{S}$ & smooth schemes of finite type over $S$ \\
$S^{m,n}$, $\Omega^{m,n}$, $\Sigma^{m,n}$ & motivic $(m,n)$-sphere, $(m,n)$-loop space, $(m,n)$-suspension  \\
$\SHH$, $\SHH^\mathrm{eff}$ & the motivic and effective motivic stable homotopy categories \\ 
$\E$, $\One=S^{0,0}$  & generic motivic spectrum, the motivic sphere spectrum \\
\end{tabular}
\vspace{0.1in}

{\bf Acknowledgements:}  We gratefully acknowledge both the editor and the referee for the two detailed and helpful referee reports on this paper.

\section{Slices and the slice spectral sequence}
\label{section:slices}
Let $i_{q}\colon \Sigma^{2q,q} \SHH^\mathrm{eff}\rInto\SHH$ be the full inclusion of the localizing subcategory generated by 
$\Sigma^{2q,q}$-suspensions of smooth schemes. 
We denote by $r_{q}$ the right adjoint of $i_{q}$ and set $\f_{q}= i_{q}\circ r_{q}$. 
The $q$th slice of $\E$ is characterized up to unique isomorphism by the distinguished triangle 
\begin{equation}\label{eq:slice-def}
\f_{q+1}(\E) 
\rTo 
\f_{q}(\E) 
\rTo 
\s_{q}(\E) 
\rTo 
\Sigma^{1,0}  \f_{q+1}(\E)
\end{equation}
in $\SHH$ \cite{Voevodsky:open}.
When it is helpful to emphasize the base scheme $S$ we shall write $\f^{S}_{q}(\E)$ and $\s^{S}_{q}(\E)$.
Smashing with the motivic sphere $\Sigma^{1,1}$ has the following effect on the slice filtration.
\begin{lemma}
\label{lemma:shift-slices}
For all $q\in\ZZ$ there are natural isomorphisms
\[ 
\f_{q}(\Sigma^{1,1}  \E)
\rTo^{\iso} 
\Sigma^{1,1}  \f_{q-1}(\E)
\text{ and }
\s_{q}(\Sigma^{1,1}  \E) 
\rTo^{\iso} 
\Sigma^{1,1}  \s_{q-1}(\E). 
\]
which are compatible with the natural transformations
occurring in~(\ref{eq:slice-def}).
\end{lemma}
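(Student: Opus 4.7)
The plan is to use the fact that smashing with $S^{1,1}$ is an autoequivalence of $\SHH$ carrying $S^{2(q-1),q-1}\smash\SHH^\mathrm{eff}$ onto $S^{2q,q}\smash\SHH^\mathrm{eff}$, and to extract both isomorphisms from the universal property of $\f_q$. The key identification rests on the factorization $S^{2q,q}\iso S^{1,0}\smash S^{1,1}\smash S^{2(q-1),q-1}$ together with the fact that the localizing subcategory $S^{2(q-1),q-1}\smash\SHH^\mathrm{eff}$ is triangulated, hence closed under $S^{1,0}$-suspension. This yields the equality of full subcategories of $\SHH$
\[
S^{1,1}\smash\bigl(S^{2(q-1),q-1}\smash\SHH^\mathrm{eff}\bigr)
=
S^{2q,q}\smash\SHH^\mathrm{eff},
\]
with inverse restriction $S^{-1,-1}\smash-$, so $S^{1,1}\smash-$ descends to a triangulated equivalence between the sources of $i_{q-1}$ and $i_q$. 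This subcategory identification is the only real point of the argument; everything thereafter is formal.

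With the equivalence in hand, the first isomorphism follows by checking the universal property of $\f_q(S^{1,1}\smash\E)$ on the candidate $S^{1,1}\smash\f_{q-1}(\E)$. Any map $X\to S^{1,1}\smash\E$ with $X$ in the source of $i_q$ corresponds under the $(S^{1,1}\smash-,S^{-1,-1}\smash-)$-adjunction to a map $S^{-1,-1}\smash X\to\E$ with $S^{-1,-1}\smash X$ in the source of $i_{q-1}$; this map factors uniquely through $\f_{q-1}(\E)\to\E$ by the defining property of $\f_{q-1}$, and reapplying $S^{1,1}\smash-$ yields the unique factorization through $S^{1,1}\smash\f_{q-1}(\E)\to S^{1,1}\smash\E$. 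Thus $S^{1,1}$ smashed with the counit $\f_{q-1}(\E)\to\E$ is the canonical map $\f_q(S^{1,1}\smash\E)\to S^{1,1}\smash\E$.

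The slice isomorphism then follows by smashing the defining triangle $\f_q(\E)\to\f_{q-1}(\E)\to\s_{q-1}(\E)\to S^{1,0}\smash\f_q(\E)$ with $S^{1,1}$, using the first isomorphism in indices $q$ and $q+1$ to identify the first two terms of the triangle for $\s_q(S^{1,1}\smash\E)$, and invoking the triangle-completion axiom to extract an isomorphism on third terms. The compatibility clause is automatic once both $\f_q$-isomorphisms are described as $S^{1,1}\smash(\text{counit})$, since the natural transformation $\f_{q+1}\to\f_q$ reflects the inclusion of subcategories and is preserved by the exact functor $S^{1,1}\smash-$.
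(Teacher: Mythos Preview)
Your proof is correct. The paper states this lemma without proof, treating it as a standard consequence of the definition of the slice filtration, so there is no proof to compare against; your argument via the autoequivalence $S^{1,1}\smash-$ shifting the effective subcategories and the universal property of the coreflection $\f_q$ is precisely the expected one.

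One small comment on presentation: in the slice step you invoke triangle completion to get the isomorphism on cofibers, but for this to give a \emph{canonical} isomorphism (rather than just existence) you need the compatibility of the two $\f$-isomorphisms with $\f_{q+1}\to\f_q$ first, not afterwards. You do supply this (both identifications arise as $S^{1,1}$ smashed with the counit, and the transformation $\f_q\to\f_{q-1}$ is the unique map over $\E$), so the logic is fine, but the order of exposition slightly obscures that the compatibility is an input to the slice isomorphism rather than a separate afterthought.
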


\begin{proof}
  Let $m$ and $q$ be integers. The suspension
  functor $\Sigma^{m,m}$ restricts to a functor 
  \[ \Sigma^{m,m}_q - \colon \Sigma^{2q,q} \SHH^\mathrm{eff}
  \rTo \Sigma^{2(q+m),q+m} \SHH^\mathrm{eff} \]
  satisfying $ \Sigma^{m,m}  i_q(\E) = i_{q+m} \circ (\Sigma^{m,m}_q \E)$
  for all $\E\in \Sigma^{2q,q} \SHH^\mathrm{eff}$. This equality induces
  a unique natural isomorphism on the respective right adjoints:
  \[ r_q\bigl(\Sigma^{-m,-m}  \E\bigr)\iso \Sigma^{-m,-m}_{q+m} r_{q+m}(\E) \]
  In particular, there results a natural isomorphism:
  \[ \f_q(\Sigma^{1,1}  \E)=i_q r_q(\Sigma^{1,1}  \E) \iso
  i_q\Sigma^{1,1}_{q-1} r_{q-1}(\E) = \Sigma^{1,1}  i_{q-1}r_{q-1}(\E)
  = \Sigma^{1,1}  \f_q(\E) \]
  In order to conclude the same for $\s_q$, observe 
  that the inclusion $i_{q+1}$ factors as $i_q\circ i^{q}_{q+1}$, where
  \[ i^q_{q+1}\colon \Sigma^{2(q+1),q+1} \SHH^\mathrm{eff} \rTo 
  \Sigma^{2q,q} \SHH^\mathrm{eff} \]
  is the natural inclusion. The functor $i^q_{q+1}$ has a right 
  adjoint $r^q_{q+1}$ for the same reason that $i_q$ does, and
  the natural transformation $\f_{q+1}\rTo \f_q$ is obtained
  from the counit $i^q_{q+1}\circ r^q_{q+1} \rTo \Id$. As before, the
  equality 
  $\Sigma^{m,m}_q i^q_{q+1}(\E) = i^{q+m}_{q+m+1} \circ (\Sigma^{m,m}_{q+1} \E)$
  induces a unique natural isomorphism on right adjoints, which
  serves to show that the diagram
  \begin{diagram}
    \f_{q+1}(\Sigma^{1,1}  \E) & \rTo^\iso & \Sigma^{1,1}  \f_q(\E) \\
    \dTo & & \dTo \\
    \f_{q}(\Sigma^{1,1}  \E) & \rTo^\iso & \Sigma^{1,1}  \f_{q-1}(\E) 
  \end{diagram}
  commutes. The remaining statements follow.
\end{proof}

Let $\eta\colon S^{1,1}\rTo \One$ denote the Hopf map induced by the canonical map $\AA^2\minus\{0\}\rTo \PP^1$. 
Since every motivic spectrum $\E$ is a module over the motivic sphere spectrum $\One$, 
multiplication with $\eta$ defines, 
by abuse of notation, 
a map $\eta\colon \Sigma^{1,1}  \E\rTo \E$.

\begin{lemma}\label{lem:hopf-slices}
For every $\E$ and $q\in \ZZ$ there is a naturally induced commutative diagram:
\begin{diagram}
\f_{q+1}(\Sigma^{1,1}  \E) & \rTo^\iso & \Sigma^{1,1}  \f_q(\E) \\
\dTo^{\f_{q+1}(\eta)} & & \dTo_{\eta} \\
\f_{q+1}\E & \rTo & \f_q \E
\end{diagram}
\end{lemma}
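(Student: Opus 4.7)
The plan is to prove commutativity of the square by exploiting the universal property of the effective cover $\pi_q^{\E}\colon\f_q(\E)\to\E$. First I would spell out how the isomorphism supplied by Lemma \ref{lemma:shift-slices} is realised. Since $S^{1,1}\smash\f_q(\E)$ is $(q+1)$-effective, the map $S^{1,1}\smash\pi_q^{\E}\colon S^{1,1}\smash\f_q(\E)\to S^{1,1}\smash\E$ factors uniquely through the counit $\pi_{q+1}^{S^{1,1}\smash\E}\colon\f_{q+1}(S^{1,1}\smash\E)\to S^{1,1}\smash\E$, and this factorisation is precisely the inverse of the top horizontal arrow. In other words, the top arrow is characterised by the identity $S^{1,1}\smash\pi_q^{\E}=\pi_{q+1}^{S^{1,1}\smash\E}\circ(\text{top arrow})^{-1}$.

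With this characterisation fixed, I would verify commutativity of the square by comparing the two composites after postcomposition with $\pi_q^{\E}\colon\f_q(\E)\to\E$. The source $\f_{q+1}(S^{1,1}\smash\E)$ is $(q+1)$-effective and therefore $q$-effective, so the universal property of $\f_q(\E)$ ensures that two morphisms into $\f_q(\E)$ from such a source agree as soon as their postcompositions with $\pi_q^{\E}$ agree.

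Going down then right, the compatibility of the slice tower with the counits (i.e., $\pi_q^{\E}$ composed with the bottom arrow equals $\pi_{q+1}^{\E}$) together with the defining square for the lift $\f_{q+1}(\eta)$ yields $\eta\circ\pi_{q+1}^{S^{1,1}\smash\E}$. Going right then down, naturality of the Hopf map applied to $\pi_q^{\E}$ supplies $\pi_q^{\E}\circ\eta=\eta\circ(S^{1,1}\smash\pi_q^{\E})$; the characterisation of the top iso recorded above then rewrites this, after precomposing with the top arrow, as $\eta\circ\pi_{q+1}^{S^{1,1}\smash\E}$ as well. The two composites therefore coincide, and commutativity follows.

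The argument is essentially a diagram chase powered by two universal properties (for $\f_{q+1}(S^{1,1}\smash\E)$ and $\f_q(\E)$) together with naturality of $\eta$; there is no real obstacle beyond keeping straight which map plays which role. The main point that deserves care is to fix, once and for all, the isomorphism of Lemma \ref{lemma:shift-slices} as the one compatible with the canonical maps to $S^{1,1}\smash\E$, since the characterisation of the iso is what marries the two halves of the diagram. Naturality in $\E$ of the square is then immediate from naturality of $\eta$, of the counits $\pi_q^{\E}$, and of the iso of Lemma \ref{lemma:shift-slices}.
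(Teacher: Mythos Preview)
The paper states this lemma without proof, so there is no argument to compare against. Your proposal is correct: it supplies exactly the routine verification the authors left implicit, namely that both composites become $\eta\circ\pi_{q+1}^{S^{1,1}\smash\E}$ after postcomposition with the counit $\pi_q^{\E}$, and that this suffices by the universal property of $\f_q$ on $q$-effective sources. Your care in pinning down the isomorphism of Lemma~\ref{lemma:shift-slices} as the one compatible with the counits is the only point requiring attention, and you handle it correctly.
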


\begin{proof}
  This follows from Lemma \ref{lemma:shift-slices} by naturality.
\end{proof}

\begin{example}
\label{example:KT}
The Hopf map induces a periodicity isomorphism $\eta\colon \Sigma^{1,1}  \KT\rTo^\iso \KT$, 
cf.~the definition \eqref{KTdef}.
In particular, 
the vertical map on the left hand side in the diagram of Lemma~\ref{lem:hopf-slices} is an isomorphism.
It also implies the isomorphism $\s_{q}(\KT)\cong \Sigma^{q,q}  \s_{0}(\KT)$.
\end{example}

The slices $\s_{q}(\E)$ are modules over the motivic ring spectrum $\s_{0} (\One)$,  
cf.~\cite[Theorem 3.6.22]{Pelaez} and \cite[\S 6(v)]{GRSO}. 
If the base scheme $S$ is a perfect field, 
then $\s_{0}(\One)$ is the Eilenberg-MacLane spectrum $\MZZ$ by the works of Levine \cite{Levine:slices} and Voevodsky \cite{Voevodsky:zero-slice}.
We set $D^{1}_{p,q,n}=\pi_{p,n}\f_{q}(\E)$ and $E^{1}_{p,q,n}=\pi_{p,n}\s_{q}(\E)$.
The exact couple 
\begin{diagram}[width=2cm,height=0.7cm]
D^{1}_{\ast} && \rTo^{(0,-1,0)} && D^{1}_{\ast}\\
& \luTo_{(-1,1,0)} && \ldTo_{(0,0,0)} & \\
&& E^{1}_{\ast} &&
\end{diagram}
gives rise to the slice spectral sequence
\begin{equation}
\label{equation:splicespectralsequence2}
E^{1}_{p,q,n} 
\Longrightarrow
\pi_{p,n}(\E).
\end{equation}
Our notation does not connote any information on the convergence of (\ref{equation:splicespectralsequence2}). 
The $d_{1}$-differential 
\[  
d_{1}^{\E}(p,q,n)
\colon
\pi_{p,n}\s_{q}(\E) 
\rTo 
\pi_{p-1,n}\s_{q+1}(\E) 
\]
of (\ref{equation:splicespectralsequence2}) is induced on homotopy groups
$\pi_{p,n}$ by the composite map
\[ 
\sliced_{1}^{\E}(q)
\colon
\s_{q}(\E) 
\rTo 
\Sigma^{1,0}  \f_{q+1}(\E)
\rTo 
\Sigma^{1,0}  \s_{q+1}(\E)
\] 
of motivic spectra. The $r$th differential has tri-degree $(-1,r,0)$.
By construction of the $q$th slice, 
if $n>q$ the maps in 
\begin{equation}
\label{eq:seq-filt}
\dotsm 
\rTo 
\pi_{p,n}\f_{q+1}(\E) 
\rTo 
\pi_{p,n}\f_{q}(\E) 
\rTo 
\dotsm 
\end{equation}
are isomorphisms, 
and $\pi_{p,n}\s_{q}(\E)$ is trivial.
Thus only finitely many nontrivial differentials enter each tri-degree, 
so that (\ref{equation:splicespectralsequence2}) is a half-plane spectral sequence with entering differentials.
Let $\f_{q}\pi_{p,n}(\E)$ denote the image of $\pi_{p,n}\f_{q}(\E)$ in $\pi_{p,n}(\E)$.
The terms $\f_{q}\pi_{p,n}(\E)$ form an exhaustive filtration of $\pi_{p,n}(\E)$.
Moreover, 
$\E$ is called convergent with respect to the slice filtration if 
\[ 
\bigcap_{i\geq 0} 
\f_{q+i}\pi_{p,n}\f_{q}(\E)
=
0 
\]
for all $p,q,n\in\ZZ$ \cite[Definition 7.1]{Voevodsky:open}. 
That is, the filtration $\{\f_{q}\pi_{p,n}(\E)\}$ of $\pi_{p,n}(\E)$ is Hausdorff.
When $\E$ is convergent, 
the spectral sequence (\ref{equation:splicespectralsequence2}) converges \cite[Lemma 7.2]{Voevodsky:open}.
Precise convergence properties of the slice spectral sequence are unclear in general \cite{Levine:sss}.
\vspace{0.1in}

When comparing slices along field extensions we shall appeal to the following base change property of the slice filtration.
\begin{lemma}
\label{lem:slice-smooth-base-change}
Let $\alpha\colon X\rTo Y$ be a smooth map. 
For $q\in\ZZ$ there are natural isomorphisms
\[ 
\f_{q}^{X} \alpha^{\ast} 
\rTo^{\iso} 
\alpha^{\ast} \f_{q}^{Y},  
\;
\s_{q}^{X} \alpha^{\ast} 
\rTo^{\iso} 
\alpha^{\ast} \s_{q}^{Y}.
\]
\end{lemma}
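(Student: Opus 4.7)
The plan is to reduce the statement to the fact that smooth pullback plays well with the localizing subcategory $S^{2q,q}\smash\SHH^{\mathrm{eff}}$ of $\SHH$, by exploiting the shriek adjunction $\alpha_{\#}\dashv\alpha^{\ast}\dashv\alpha_{\ast}$ available because $\alpha$ is smooth. The slice functor $\f_{q}^{Y}=i_{q}^{Y}\circ r_{q}^{Y}$ is determined up to unique isomorphism by two properties of the counit triangle
\[
\f_{q}^{Y}(\E)\rTo \E\rTo\E/\f_{q}^{Y}(\E),
\]
namely that $\f_{q}^{Y}(\E)$ belongs to $S^{2q,q}\smash\SHH^{\mathrm{eff}}(Y)$ and that the cofiber $\E/\f_{q}^{Y}(\E)$ is right orthogonal to this subcategory. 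So the goal is to produce a comparison map $\f_{q}^{X}\alpha^{\ast}\rTo\alpha^{\ast}\f_{q}^{Y}$ and to verify these two defining properties after applying $\alpha^{\ast}$.

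First I would check that $\alpha^{\ast}$ maps $S^{2q,q}\smash\SHH^{\mathrm{eff}}(Y)$ into $S^{2q,q}\smash\SHH^{\mathrm{eff}}(X)$. Since $\alpha^{\ast}$ is both a left and a right adjoint, it preserves all small homotopy colimits and the sphere bigradings, so it suffices to trace the generators: for $U\rTo Y$ smooth one has $\alpha^{\ast}\Sigma^{2q,q}\Sigma^{\infty}U_{+}\cong\Sigma^{2q,q}\Sigma^{\infty}(U\times_{Y}X)_{+}$, and $U\times_{Y}X\rTo X$ is again smooth. By symmetry one checks simultaneously that $\alpha_{\#}$ sends generators of $S^{2q,q}\smash\SHH^{\mathrm{eff}}(X)$ to generators of $S^{2q,q}\smash\SHH^{\mathrm{eff}}(Y)$, because a smooth $X$-scheme composed with the smooth map $\alpha$ is smooth over $Y$. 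This gives both inclusions $\alpha^{\ast}\circ i_{q}^{Y}\simeq i_{q}^{X}\circ\alpha^{\ast}$ and $\alpha_{\#}\circ i_{q}^{X}\simeq i_{q}^{Y}\circ\alpha_{\#}$ at the level of the full subcategories.

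Now apply $\alpha^{\ast}$ to the defining triangle above. The left term $\alpha^{\ast}\f_{q}^{Y}(\E)$ lies in $S^{2q,q}\smash\SHH^{\mathrm{eff}}(X)$ by the previous paragraph. For the right term I invoke the shriek adjunction: for any $V\rTo X$ smooth,
\[
[\Sigma^{2q,q}\Sigma^{\infty}V_{+},\alpha^{\ast}(\E/\f_{q}^{Y}\E)]_{\SHH(X)}\cong[\alpha_{\#}\Sigma^{2q,q}\Sigma^{\infty}V_{+},\E/\f_{q}^{Y}\E]_{\SHH(Y)},
\]
and the right-hand side vanishes because $\alpha_{\#}\Sigma^{2q,q}\Sigma^{\infty}V_{+}\cong\Sigma^{2q,q}\Sigma^{\infty}V_{+}$ (with $V$ now smooth over $Y$) lies in $S^{2q,q}\smash\SHH^{\mathrm{eff}}(Y)$, against which $\E/\f_{q}^{Y}\E$ is right orthogonal. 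These generators detect the subcategory, so $\alpha^{\ast}(\E/\f_{q}^{Y}\E)$ is right orthogonal to $S^{2q,q}\smash\SHH^{\mathrm{eff}}(X)$. By the universal property, the triangle $\alpha^{\ast}\f_{q}^{Y}(\E)\rTo\alpha^{\ast}\E\rTo\alpha^{\ast}(\E/\f_{q}^{Y}\E)$ is the one defining $\f_{q}^{X}\alpha^{\ast}\E$, yielding the desired natural isomorphism $\f_{q}^{X}\alpha^{\ast}\cong\alpha^{\ast}\f_{q}^{Y}$. The statement for $\s_{q}$ then follows by applying $\alpha^{\ast}$ to the triangle $\f_{q+1}^{Y}\E\rTo\f_{q}^{Y}\E\rTo\s_{q}^{Y}\E$ and invoking what was just shown in degrees $q$ and $q+1$.

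The main potential obstacle is the verification that $\alpha^{\ast}$ genuinely preserves the localizing subcategory $S^{2q,q}\smash\SHH^{\mathrm{eff}}$, rather than merely sending its generators there; this requires commuting $\alpha^{\ast}$ past arbitrary homotopy colimits and suspensions, which is automatic from the double adjunction available for smooth $\alpha$ but would fail for a general morphism. The orthogonality step likewise relies crucially on the existence of $\alpha_{\#}$ together with the compatibility $\alpha_{\#}\Sigma^{\infty}V_{+}\cong\Sigma^{\infty}V_{+}$ for $V$ smooth over $X$, and these two ingredients together are precisely what make smoothness the correct hypothesis.
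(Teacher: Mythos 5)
Your proposal is correct and follows essentially the same route as the paper: the key inputs in both are that smoothness provides the adjunction $\alpha_{\#}\dashv\alpha^{\ast}$ and that both $\alpha^{\ast}$ and $\alpha_{\#}$ preserve the subcategories $S^{2q,q}\smash\SHH^{\mathrm{eff}}$ (checked on generators via base change, respectively composition, of smooth schemes). The paper packages the final step as uniqueness of adjoints applied to $\alpha_{\#}\circ i_{q}^{X}\cong i_{q}^{Y}\circ\alpha_{\#}$, giving $r_{q}^{X}\alpha^{\ast}\cong\alpha^{\ast}r_{q}^{Y}$, whereas you verify the universal property of the colocalization triangle directly; these are the same argument in substance.
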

\begin{proof}
Any map $\alpha\colon X\rTo Y$ between base schemes yields a commutative diagram: 
\begin{diagram}
\Sigma^{2q,q}  \SHH^{\eff}(Y) & 
\rTo^{\alpha^{\ast}} & 
\Sigma^{2q,q} \SHH^{\eff}(X) \\
\dTo^{i_{q}^{Y}} & & \dTo_{i_{q}^{X}} \\
\SHH(Y) & 
\rTo^{\alpha^{\ast}} & 
\SHH(X) 
\end{diagram}
Let $\alpha_{\ast}$ be the right adjoint of $\alpha^{\ast}$.  
By uniqueness of adjoints, 
up to unique isomorphism, 
there exists a natural isomorphism of triangulated functors
\[ 
r^{Y}_{q} \alpha_{\ast} 
\rTo^{\iso} \alpha_{\ast} r_{q}^{X}.  
\]
Since $\alpha$ is smooth,  
the functor $\alpha^{\ast}$ has a left adjoint $\alpha_\sharp$ and there is a commutative diagram:
\begin{diagram}
\Sigma^{2q,q}  \SHH^{\eff}(X) & 
\rTo^{\alpha^\sharp} & 
\Sigma^{2q,q} \SHH^{\eff}(Y) \\
\dTo^{i_{q}^{X}} & & \dTo_{i_{q}^{Y}}\\  
\SHH(X) & \rTo^{\alpha^\sharp} & \SHH(Y)
\end{diagram}
By uniqueness of adjoints,  
there is an isomorphism
\[ 
r_{q}^{X} \alpha^{\ast} 
\rTo^{\iso}
\alpha^{\ast} r_{q}^{Y}, 
\]
and hence
\begin{equation}
\label{equation:fbasechange}
\f_{q}^{X} \alpha^{\ast} 
= 
r_{q}^{X} i_{q}^{X} \alpha^{\ast} 
= 
r_{q}^{X} \alpha^{\ast} i_{q}^{Y} 
\iso 
\alpha^{\ast} r_{q}^{Y} i_{q}^{Y} 
= 
\alpha^{\ast}\f_{q}^{Y}.
\end{equation}
The desired isomorphism for slices follows since, 
by uniqueness of adjoints, 
(\ref{equation:fbasechange}) is compatible with the natural transformation $\f_{q+1}\rTo\f_{q}$. 
\end{proof}

\begin{theorem}
\label{thm:slice-ess-smooth-base-change}
Suppose $\mathcal{I}$ is a filtered partially ordered set and
$
D
\colon 
\mathcal{I}^{\op}
\rTo 
\Sm_{Y}, 
i
\rMapsto X_{i}
$
is a diagram of $Y$-schemes with affine bonding maps.
Let $\alpha\colon X\equiv\lim_{i\in \mathcal{I}} X_{i}\rTo Y$ be
the naturally induced morphism. 
For every $q\in \ZZ$ there are natural isomorphisms
\[  
\s_{q}^{X} \alpha^{\ast} 
\rTo^{\iso} 
\alpha^{\ast} \s_{q}^{Y}
\text{ and }
\f_{q}^{X} \alpha^{\ast} 
\rTo^{\iso} 
\alpha^{\ast} \f_{q}^{Y}.
\]
\end{theorem}
\begin{proof}
Let $\mathcal{G}$ be a compact generator of the triangulated category $\Sigma^{2q+2,q+1} \SHH^{\eff}(X)$ and $\E\in\SHH(Y)$.
According to \cite[Theorem 2.12]{Pelaez:base} it suffices to show $[\mathcal{G},\alpha^{\ast}\s_{q}^{Y}(\E)]$ is trivial. 
Note that $\mathcal{G}$ is of the form $\alpha(j)^{\ast}\mathcal{G}_{j}$ for $\mathcal{G}_{j}$ a compact generator of the triangulated 
category $\Sigma^{2q+2,q+1} \SHH^{\eff}(X_{j})$. 
We consider the overcategory of an arbitrary element $j\in\mathcal{I}$ and the composite diagram
\[
D\downarrow j
\colon  
\mathcal{I}^{\op}\downarrow j
\rTo 
\mathcal{I}^{\op} 
\rTo^{D} 
\Sm_{Y}, 
(j\rightarrow i)
\mapsto 
X_{i}.
\]
We claim the functor $\Phi\colon\mathcal{I}^{\op}\downarrow j\rTo\mathcal{I}^{\op}$ yields an isomorphism 
$\colim D\downarrow j\rTo\colim D$.
For $i\in\mathcal{I}$ there exists an object $i^{\prime}$ and maps $i\rightarrow i^{\prime} \leftarrow j$, 
since $\mathcal{I}$ is filtered. 
Thus $\Phi\downarrow i$ is nonempty.
For zig-zags $i\rightarrow i^{\prime}\leftarrow j$ and $i\rightarrow i^{\prime\prime}\leftarrow j$ there exist maps 
$i^{\prime}\rightarrow\ell\leftarrow i^{\prime\prime}$ such that the induced maps from $i$ to $\ell$ coincide, 
and similarly for $j$ ($\mathcal{I}$ is a partially ordered set.)
Hence $\Phi\downarrow i$ is connected. 
For $j\rightarrow i$ in $\mathcal{I}^{\op}\downarrow j$, 
let $e(i)\colon X_{i}\rTo X_{j}$ be the structure map of the diagram. 
Thus the map
\[ 
\hocolim_{j\downarrow \mathcal I} e(i)_{\ast} e(i)^{\ast}\F
\rTo 
\alpha(j)_{\ast} \alpha(j)^{\ast}\F 
\]
is an isomorphism in $\SHH(X_{j})$ for all $\F\in \SHH(X_{j})$. 
Since $\mathcal{G}_{j}$ is compact and slices commute with base change along smooth maps by Lemma \ref{lem:slice-smooth-base-change},
the group
\begin{align*}
[\mathcal{G},\alpha^{\ast} \s_{q}^{Y}(\E)] 
& = 
[\alpha(j)^{\ast} \mathcal{G}_{j},\alpha^{\ast} \s_{q}^{Y}(\E)]  \\
& \cong 
[\mathcal{G}_{j},\alpha(j)_{\ast} \alpha(j)^{\ast} \beta(j)^{\ast} \s_{q}^{Y}(\E)] \\
& \cong
[\mathcal{G}_{j},\hocolim_{j\downarrow \mathcal{I}} e(i)_{\ast} e(i)^{\ast} \beta(j)^{\ast} \s_{q}^{Y}(\E)] \\
& \cong 
\colim_{j\downarrow \mathcal{I}} [e(i)^{\ast} \mathcal{G}_{j},e(i)^{\ast} \beta(j)^{\ast} \s_{q}^{Y}(\E)]  
\end{align*}
is trivial.
With reference to \cite[Remark 2.13]{Pelaez:base} the second isomorphism follows in the same way.
\end{proof}

\begin{lemma}
\label{lemma:slice-ess-smooth-base-change}
If $\alpha\colon A\rTo B$ is a regular ring map 
then $\s_{q}^{B}\alpha^{\ast}\iso\alpha^{\ast}\s_{q}^{A}$.
\end{lemma}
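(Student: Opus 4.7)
The plan is to reduce the statement to Theorem \ref{thm:slice-ess-smooth-base-change} by presenting $\alpha\colon\Spec B\rTo \Spec A$ as a cofiltered limit of smooth $\Spec A$-schemes with affine bonding maps. Once such a presentation is in place, the conclusion follows directly from the preceding theorem.

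The key input will be the N\'eron-Popescu desingularization theorem. It asserts that a regular homomorphism $\alpha\colon A\to B$ of noetherian rings realizes $B$ as a filtered colimit $B\iso\colim_{i\in\mathcal{I}}B_{i}$ of smooth, finitely presented $A$-subalgebras. Passing to spectra produces a cofiltered diagram $D\colon\mathcal{I}^{\op}\rTo\Sm_{\Spec A}$, $i\mapsto X_{i}=\Spec B_{i}$, whose bonding maps are affine because each $X_{i}$ is affine, and whose limit is canonically isomorphic to $\Spec B$ as an affine scheme over $\Spec A$.

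With this cofiltered limit presentation the hypotheses of Theorem \ref{thm:slice-ess-smooth-base-change} are satisfied, and that theorem delivers the natural isomorphisms $\s_{q}^{B}\alpha^{\ast}\iso\alpha^{\ast}\s_{q}^{A}$ and $\f_{q}^{B}\alpha^{\ast}\iso\alpha^{\ast}\f_{q}^{A}$. The only substantive obstacle is verifying that the regularity and noetherianness hypotheses needed to invoke N\'eron-Popescu are available in the paper's setting of finite-dimensional, regular, separated, noetherian base schemes; this is standard. Everything else is a formal consequence of Theorem \ref{thm:slice-ess-smooth-base-change}.
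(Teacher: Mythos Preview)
Your proposal is correct and follows essentially the same approach as the paper's proof: invoke Popescu's desingularization theorem to write $B$ as a filtered colimit of smooth $A$-algebras of finite type, then apply Theorem~\ref{thm:slice-ess-smooth-base-change}. One small inaccuracy: Popescu's theorem does not in general produce smooth $A$-\emph{subalgebras} of $B$, only a filtered diagram of smooth $A$-algebras with colimit $B$; this does not affect your argument.
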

\begin{proof}
By Popescu's general N{\'e}ron desingularization theorem \cite{Popescu} the regularity assumption on $\alpha$ is 
equivalent to $B$ being the colimit of a filtered diagram of smooth $A$-algebras of finite type.
The result follows now from Theorem~\ref{thm:slice-ess-smooth-base-change}.
\end{proof}
\begin{corollary}\label{cor:slice-ess-smooth-base-change}
If $\alpha\colon F\rTo E$ is a separable field extension then $\s_{q}^{E}\alpha^{\ast}\iso\alpha^{\ast}\s_{q}^{F}$.
\end{corollary}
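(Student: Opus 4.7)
The plan is to reduce immediately to Lemma~\ref{lemma:slice-ess-smooth-base-change}; all that is needed is to verify that a separable field extension $\alpha\colon F\to E$ is a regular ring map in the sense used there, after which the conclusion is automatic.

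First I would note that $\alpha$ is flat, since every field extension is flat. The spectrum of $F$ has only one point, so geometrically regular fibres means exactly that $E\otimes_{F} F'$ is a regular Noetherian ring for every finite purely inseparable extension $F'/F$. Since $F'/F$ is finite, $E\otimes_F F'$ is a finite-dimensional $E$-algebra, hence Artinian and a fortiori Noetherian, so the Noetherian hypothesis in Popescu's theorem is not an issue.

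Next I would invoke the classical characterization (e.g.\ Bourbaki, \emph{Alg\`ebre}, Ch.~V) that $E/F$ is separable if and only if $E\otimes_F F'$ is reduced for every purely inseparable extension $F'/F$. Combined with the previous paragraph, $E\otimes_F F'$ is a reduced Artinian ring, hence a finite product of fields, which is in particular regular. This establishes that $\alpha$ is a regular ring map.

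Having verified the hypothesis, the isomorphism $\s_{q}^{E}\alpha^{\ast}\iso \alpha^{\ast}\s_{q}^{F}$ follows directly from Lemma~\ref{lemma:slice-ess-smooth-base-change}. The only potential obstacle is the regularity criterion for separable extensions, but this is purely commutative algebra and entirely standard; no further homotopical input is required beyond the previous lemma.
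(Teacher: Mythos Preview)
Your proposal is correct and takes essentially the same approach as the paper: the corollary is stated without proof there because it is immediate from Lemma~\ref{lemma:slice-ess-smooth-base-change} once one knows that a separable field extension is a regular ring map, and you have supplied precisely that standard commutative-algebra verification.
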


\section{Algebraic and hermitian $K$-theory}
Let $\KGL=(K,\dotsc)$ denote the motivic spectrum representing algebraic $K$-theory \cite[\S 6.2]{Voevodsky:icm} over the base scheme $S$.
Here, 
$K\colon\Sm_{S}\rTo\Spt$ sends a smooth $S$-scheme to its algebraic $K$-theory spectrum.
The structure maps of $\KGL$ are given by the Bott periodicity operator
\[
\beta
\colon 
K
\rTo^{\sim} 
\Omega^{2,1}K.
\]
Suppose  $2$ is invertible in the ring of regular functions on $S$. 
Let $KO\colon \Sm_{S}\rTo \Spt$ denote the functor sending a smooth $S$-scheme to the (non-connective) spectrum representing the 
hermitian $K$-groups for the trivial involution on $S$ and sign of symmetry $\varepsilon=+1$. 
Similarly, 
when $\varepsilon = -1$, 
we use the notation $KSp\colon \Sm_{S}\rTo \Spt$.

There are  natural maps $f_{0}\colon KO \rTo K$ and $f_{2}\colon KSp\rTo K$ induced by the forgetful functors.
Taking the homotopy fibers of these forgetful maps yields the following
homotopy fiber sequences:
\begin{equation}\label{eq:v}
\begin{diagram}[width=1.3cm]
\Omega^{1,0}K& \rTo^{h_3^\prime} & VQ &\rTo^\can & KO & \rTo^{f_0} & K \\ 
\Omega^{1,0}K& \rTo^{h_1^\prime} & VSp &\rTo^\can & KSp & \rTo^{f_2} & K 
\end{diagram}
\end{equation}
Moreover, 
there are natural maps
\begin{diagram}
h_{0}\colon K & \rTo & KO 
\text{ and }
h_{2}\colon K & \rTo & KSp
\end{diagram}
induced by the hyperbolic functors.
Taking the homotopy fibers of these hyperbolic maps yields the following
homotopy fiber sequences:
\begin{equation}\label{eq:u}
\begin{diagram}[width=1.3cm]
\Omega^{1,0}KO& \rTo^\can & UQ &\rTo^{f_3} & K & \rTo^{h_0} & KO \\ 
\Omega^{1,0}KSp& \rTo^\can & USp &\rTo^{f_1} & K & \rTo^{h_2} & KSp 
\end{diagram}
\end{equation}
Karoubi's fundamental theorem in hermitian $K$-theory \cite{Karoubi} can be formulated as follows.
\begin{theorem}[Karoubi]
\label{thm:master}
There are natural weak equivalences
\begin{diagram} 
\phi
\colon 
\Omega^{1,0} USp & \rTo^{\sim} & VQ 
\text{ and }
\psi
\colon 
\Omega^{1,0} UQ & \rTo^{\sim} & VSp.
\end{diagram}
\end{theorem}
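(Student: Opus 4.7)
The plan is to reformulate and apply Karoubi's classical fundamental theorem \cite{Karoubi} sectionwise over $\Sm_{S}$. Karoubi's original statement, for a ring $A$ with $\tfrac{1}{2}\in A$ equipped with an involution, provides natural stable equivalences between $\Omega$ of the hyperbolic fiber at one sign of symmetry and the forgetful fiber at the opposite sign. Since $VQ$, $VSp$, $UQ$, $USp$ are defined as presheaves of spectra obtained as homotopy fibers of natural maps between spectra built from exact categories with duality, their values at each $X\in\Sm_{S}$ (equipped with the trivial involution) fall within the scope of Karoubi's theorem.

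First I would apply Karoubi's theorem to $A=\Gamma(X,\OO_{X})$ for each affine open $X\in\Sm_{S}$ in order to produce natural stable equivalences of spectra $\phi(X)\colon \Omega USp(X)\rTo VQ(X)$ and $\psi(X)\colon \Omega UQ(X)\rTo VSp(X)$. Next I would verify that these are natural in $X$: since Karoubi's maps arise from functorial constructions on the exact category of vector bundles with duality (a twist-and-shift of the symplectic or symmetric form combined with the functoriality of the $S_{\bullet}$-construction), they respect pullback along morphisms in $\Sm_{S}$ and assemble into morphisms of presheaves of spectra. Stable equivalences of presheaves of spectra are detected sectionwise, so the assembled maps $\phi$ and $\psi$ are weak equivalences in the sense required by the theorem.

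The main obstacle is the preliminary comparison between Karoubi's originally constructed hermitian $K$-theory spectra and the spectra $KO$ and $KSp$ employed here, which are taken to be Schlichting's non-connective hermitian $K$-theory spectra of the exact category of vector bundles with the natural duality. Once this identification is granted—see \cite{Hornbostel} for the comparison in the motivic context—the result follows by formal manipulation of homotopy fibers and the naturality of the functors of exact categories with duality appearing in Karoubi's construction, which already guarantees that Karoubi's sectionwise equivalences promote to equivalences of the motivic spectra $VQ$, $VSp$, $UQ$, $USp$.
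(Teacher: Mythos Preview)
The paper does not prove this statement; it is quoted as Karoubi's fundamental theorem with a citation to \cite{Karoubi}. So there is no proof in the paper to compare against---the theorem is treated as a black box input.

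Your outline of how one would deduce the presheaf-level statement from Karoubi's ring-level theorem is broadly reasonable, but there is a gap. You apply Karoubi's theorem only to $A=\Gamma(X,\OO_X)$ for affine $X$, and then assert that the resulting maps assemble to sectionwise weak equivalences of presheaves on all of $\Sm_S$. This does not follow: a general smooth $S$-scheme is not affine, and hermitian $K$-theory of a non-affine scheme is not the hermitian $K$-theory of its ring of global sections. To close this gap you would need either a descent argument (Nisnevich or Zariski descent for the presheaves $KO$, $KSp$, $K$, which reduces to the affine case) or a direct appeal to the scheme-level version of Karoubi's theorem proved in Schlichting's framework of exact categories with duality, where the input is the category of vector bundles on $X$ rather than finitely generated projective modules over a ring. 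You gesture at this in your last paragraph, but the actual work---identifying the non-connective spectra used here with those to which Karoubi periodicity applies, and verifying naturality in $X$---is more substantial than ``formal manipulation of homotopy fibers''; it is precisely the content of the references \cite{Hornbostel} and \cite{Hornbostel-Schlichting} that the paper cites around this theorem.
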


In their foundational paper on localization in hermitian $K$-theory of rings \cite[Section 1.8]{Hornbostel-Schlichting}, 
Hornbostel-Schlichting show the following result.
\begin{theorem}[Hornbostel-Schlichting]
\label{thm:hornbostel-schlichting}
The homotopy cofiber of the map 
\[ 
KO 
\rTo 
KO(\AA^{1}\minus \{0\}\times_{S} -)
\quad
\mathrm{resp.}
\quad
KSp 
\rTo 
KSp(\AA^{1}\minus \{0\}\times_{S} -)
\] 
induced by the map $\AA^{1}\minus \{0\}\rTo S$ is naturally weakly equivalent to $\Sigma^{1,0}UQ$ resp.~$\Sigma^{1,0}USp$.
\end{theorem}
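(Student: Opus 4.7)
The plan is to follow Hornbostel--Schlichting's strategy for establishing Bass-type fundamental theorems in hermitian K-theory, combining their localization theorem for hermitian K-theory of regular schemes with $\AA^1$-homotopy invariance and a dévissage identification whose technical content is supplied by Karoubi's fundamental theorem (Theorem~\ref{thm:master}).

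First, for a smooth $S$-scheme $X$, I would apply the hermitian localization sequence to the open immersion $\AA^1\minus\{0\}\hookrightarrow\AA^1$ over $S$ after base change by $X$. This yields a homotopy cofiber sequence
\[
KO^{\{0\}}(\AA^1\times_{S}X)\to KO(\AA^1\times_{S}X)\to KO((\AA^1\minus\{0\})\times_{S}X),
\]
where $KO^{\{0\}}$ denotes the hermitian K-theory of perfect complexes supported on the zero section. The sequence is natural in $X$, giving the corresponding cofiber sequence of spectrum-valued presheaves on $\Sm_S$. Applying $\AA^1$-homotopy invariance for $KO$ on regular noetherian schemes with $2$ invertible identifies the middle term with $KO(X)$ and the middle map with the canonical one appearing in the theorem statement; since the cofiber of the localization sequence suspends the left-hand term, it suffices to identify $KO^{\{0\}}(\AA^1\times_{S}X)$ with $UQ(X)$.

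The crux of the argument is this last identification. I would approach it via dévissage along the Koszul resolution $\mathcal{O}_{\AA^1\times_S X}\xrightarrow{t}\mathcal{O}_{\AA^1\times_S X}$: a perfect complex supported on $\{0\}\times_{S}X$ is quasi-isomorphic to a two-term Koszul complex of vector bundles on $X$, and the duality on the supported category inherits a cohomological shift from the regular codimension-one embedding. This shift alters the $\pm 1$-symmetry type of pairings on cohomology, so the supported hermitian K-theory does not literally return $KO(X)$ or $KSp(X)$; rather, by Karoubi's fundamental theorem, which governs precisely the interplay between $KO$, $KSp$, $UQ$, and $USp$ under such shifts, the supported hermitian K-theory is identified with $UQ(X)$, the fiber of the hyperbolic map $h_0\colon K\to KO$. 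The symplectic variant is treated in parallel and produces $USp$.

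The principal technical obstacle is tracking the duality shift through the dévissage and recognizing the output as the fiber of the hyperbolic map $h_0\colon K\to KO$ rather than of the forgetful map $f_0\colon KO\to K$ or of one of the symplectic analogues. This requires the naturality and periodicity content of Karoubi's theorem in a crucial way, and the precise bookkeeping is the main content of \cite{Hornbostel-Schlichting}. Once the identification of the supported hermitian K-theory is in place, the cofiber sequence produces the asserted equivalence $\mathrm{cofib}(KO\to KO(\AA^1\minus\{0\}\times_S -))\simeq \Sigma^{1,0}UQ$, with the symplectic case yielding $\Sigma^{1,0}USp$.
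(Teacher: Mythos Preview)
The paper does not give its own proof of this statement: it is quoted as a result of Hornbostel--Schlichting \cite[Section 1.8]{Hornbostel-Schlichting} and used as a black box. So there is no proof in the paper against which to compare your proposal.

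That said, your sketch is a faithful outline of the strategy behind the cited result. The ingredients you name---a localization/support cofiber sequence for hermitian $K$-theory along $\AA^1\minus\{0\}\hookrightarrow\AA^1$, $\AA^1$-homotopy invariance of $KO$ and $KSp$ over regular bases with $2$ invertible, and a d\'evissage identification of the supported term in which the codimension-one shift of duality is unwound via Karoubi's fundamental theorem---are exactly the components of the Hornbostel--Schlichting argument. You are also right that the delicate point is the bookkeeping of the duality shift: the supported category carries the shifted duality, and one must check that the resulting hermitian $K$-theory matches $UQ$ (respectively $USp$) rather than one of the other three pieces. Your proposal correctly flags this as the technical heart and correctly attributes its resolution to \cite{Hornbostel-Schlichting}; for the purposes of this paper that is all that is needed.
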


The homotopy cofiber sequences in Theorem \ref{thm:hornbostel-schlichting} split by the unit section $1\in \AA^1\minus\{0\}(S)$.
Since $\Omega^{1,1}$ is the homotopy fiber with respect to the unit section, 
Theorem~\ref{thm:hornbostel-schlichting} implies the natural weak equivalences
\[ 
\Omega^{2,1}KO \sim \Omega^{1,0}\Sigma^{1,0}UQ\lTo^\sim UQ
\quad \mathrm{and} \quad
\Omega^{2,1}KSp \sim \Omega^{1,0}\Sigma^{1,0}USp\lTo^\sim USp.
\]
In \cite{Hornbostel} Hornbostel shows that hermitian $K$-theory is represented by the motivic spectrum 
\[
\KQ
=
(KO,USp,KSp,UQ,KO,USp,\dotsc).
\]
Our notation emphasizes the connection between hermitian $K$-theory and quadratic forms.
The structure maps of $\KQ$ are the adjoints of the weak equivalences:
\begin{equation}
\label{equation:adjointweakequivalences}
\begin{diagram}[height=1cm]
KO & \rTo^{\sim}  & \Omega^{2,1} USp & & USp & \rTo^{\sim} & \Omega^{2,1}KSp \\
KSp & \rTo^{\sim}  & \Omega^{2,1} UQ & & UQ & \rTo^{\sim} & \Omega^{2,1}KO 
\end{diagram}
\end{equation}

\begin{proposition}
There are commutative diagrams:
\begin{diagram}
KO & \rTo^{f_{0}} & K  & &   USp & \rTo^{f_{1}} &  K \\
\dTo_{\sim} & & \dTo_{\beta}^{\sim} && \dTo_{\sim} & & \dTo_{\beta}^{\sim} \\ 
\Omega^{2,1}USp & \rTo^{\Omega^{2,1}f_{1}} & \Omega^{2,1}K & 
& \Omega^{2,1}KSp & \rTo^{\Omega^{2,1}f_{2}} & \Omega^{2,1}K \\
KSp & \rTo^{f_{2}} & K  & &   UQ & \rTo^{f_{3}} &  K \\
\dTo_{\sim} & & \dTo_{\beta}^{\sim} && \dTo_{\sim} & & \dTo_{\beta}^{\sim} \\ 
\Omega^{2,1}UQ & \rTo^{\Omega^{2,1}f_{3}} & \Omega^{2,1}K & 
& \Omega^{2,1}KO & \rTo^{\Omega^{2,1}f_{0}} & \Omega^{2,1}K \\
\end{diagram}
\end{proposition}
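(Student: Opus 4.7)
The plan is to establish all four commutative diagrams by a single naturality argument, treating them uniformly since they differ only by the position in the 4-step periodicity cycle $KO,USp,KSp,UQ$. It therefore suffices to carry out the argument for the first diagram in detail.

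First I would unpack the two vertical maps in terms of split cofiber sequences coming from the projection $\AA^{1}\minus\{0\}\times_{S} -\rTo -$. On the right, the Bott periodicity equivalence $\beta\colon K\rTo\Omega^{2,1}K$ is the $\Omega^{1,0}$-adjoint of the natural equivalence $\Omega^{1,1}K\simeq K$ obtained by taking the homotopy fiber of the retraction given by the unit section $1\in\AA^{1}\minus\{0\}(S)$ in the projective bundle split cofiber sequence
\[
K
\rTo
K(\AA^{1}\minus\{0\}\times_{S} -)
\rTo
\Sigma^{1,0}K.
\]
On the left, the structure map $KO\rTo\Omega^{2,1}USp$ is obtained by the same procedure, substituting Theorem~\ref{thm:hornbostel-schlichting} for the projective bundle formula to produce the cofiber $\Sigma^{1,0}UQ$, and then rotating the resulting equivalence via Karoubi's Theorem~\ref{thm:master} to reach $USp$.

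Next I would invoke naturality. Since $f_{0}\colon KO\rTo K$ is a natural transformation of presheaves of spectra on $\Sm_{S}$, it commutes with the endofunctor $-(\AA^{1}\minus\{0\}\times_{S} -)$ and with the unit-section retractions. Applying $f_{0}$ to the Hornbostel--Schlichting splitting for $KO$ and to the projective bundle splitting for $K$ yields a map of split cofiber sequences
\begin{diagram}
KO & \rTo & KO(\AA^{1}\minus\{0\}\times_{S} -) & \rTo & \Sigma^{1,0}UQ \\
\dTo^{f_{0}} & & \dTo & & \dTo \\
K & \rTo & K(\AA^{1}\minus\{0\}\times_{S} -) & \rTo & \Sigma^{1,0}K
\end{diagram}
whose induced map on the right-hand cofibers is $\Sigma^{1,0}f_{3}$. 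Taking $\Omega^{1,1}$-homotopy fibers of the unit sections on the two leftmost columns, passing to $\Omega^{1,0}$-adjoints, and then applying the Karoubi rotation to the upper-right term produces the first commutative square of the proposition. The remaining three diagrams follow by the same argument with $f_{0}$ replaced successively by $f_{1},f_{2},f_{3}$.

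The main obstacle I expect is justifying that the induced map on cofibers really is $\Sigma^{1,0}f_{3}$ (and its analogues in the other three cases). Both the projective bundle splitting for $K$ and the Hornbostel--Schlichting splitting for $KO$ arise from multiplication by the virtual class $[\OO(1)]-[\OO]$, promoted to a hermitian splitting class on the $KO$ side. One must check at the level of categories with duality that the forgetful functor intertwines these splittings and sends the hermitian cofiber summand $\Sigma^{1,0}UQ$ to the algebraic summand $\Sigma^{1,0}K$ via precisely $\Sigma^{1,0}f_{3}$, where $f_{3}\colon UQ\rTo K$ is the canonical map from the fiber sequence $UQ\rTo K\rTo KO$ defining $UQ$. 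Modulo this compatibility, the commutativity of the four diagrams is an immediate consequence of the naturality of $f_{0},f_{1},f_{2},f_{3}$.
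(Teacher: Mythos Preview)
The paper states this proposition without proof: there is no \verb|\begin{proof}...\end{proof}| environment following it, and the text immediately continues with the definition of the forgetful map $f\colon\KQ\rTo\KGL$ as the sequence $(f_0,f_1,f_2,f_3,\dotsc)$. The proposition is treated as a compatibility statement that is either standard or implicit in the references \cite{Hornbostel}, \cite{Hornbostel-Schlichting}, and \cite{Karoubi}; indeed, its content is precisely what is needed for the levelwise $f_i$ to assemble into a map of motivic spectra.

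Your approach is the natural one and is essentially correct in outline. The key insight---that the structure maps on both sides arise from the same functorial construction (homotopy cofiber along $\AA^1\minus\{0\}\times_S -\rTo -$, split by the unit section) and that the forgetful functor, being a natural transformation of presheaves of spectra, intertwines these constructions---is exactly right. You have also correctly isolated the only nontrivial step: identifying the induced map on cofibers with $\Sigma^{1,0}f_3$ (resp.\ $f_0,f_1,f_2$). This identification is where the actual content of Hornbostel--Schlichting's localization theorem \cite[\S1.8]{Hornbostel-Schlichting} enters, since their description of the cofiber of $KO\rTo KO(\AA^1\minus\{0\}\times_S -)$ as $\Sigma^{1,0}UQ$ is given in terms of categories with duality, and one must verify that forgetting the duality recovers the analogous splitting for $K$ with the expected map. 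Once that is checked, the rest follows formally. Since the paper offers no argument to compare against, your sketch is as much as one can reasonably ask for here.
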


The forgetful map $f\colon \KQ\rTo \KGL$ is given by the sequence
$(f_0,f_1,f_2,f_3,f_0,\dotsc)$ of maps of pointed motivic spaces
displayed in diagrams~(\ref{eq:v}) and~(\ref{eq:u}).
Similarly, the hyperbolic map $h\colon \KGL\rTo \KQ$ is given by the
sequence $(h_0,h_1,h_2,h_3,h_0,\dotsc)$. Here $h_0$ and $h_2$ have
been introduced before, and $h_1$ and $h_3$ are defined by the
weak equivalences from Theorem~\ref{thm:master} and the
canonical maps $h_1^\prime$ and $h_3^\prime$ introduced 
with the construction
of $VQ$ and $VSp$ in diagram~(\ref{eq:v}).
By inspection of the structure maps of $\KQ$ determined by (\ref{equation:adjointweakequivalences}), 
devising a map $\KQ \rTo \Omega^{1,1} \KQ$ of motivic spectra is tantamount to giving a compatible sequence of maps between motivic spaces
\[ 
KO \rTo \Sigma^{1,0}UQ, 
\;\;
USp \rTo \Sigma^{1,0} KO,
\;\;
KSp \rTo \Sigma^{1,0}USp,
\;\;
UQ \rTo \Sigma^{1,0} KSp,\dotsc . 
\]

Next we show the homotopy cofiber sequence (\ref{equation:Hopfsequence}) relating algebraic and hermitian $K$-theory via 
the stable Hopf map, 
as reviewed in the introduction. 
A closely related statement is obtained in \cite[Theorem 6.1]{Schlichting}.
\begin{theorem}
\label{thm:cofiber-seq}
The stable Hopf map and the forgetful map yield a homotopy cofiber sequence 
\[ 
\Sigma^{1,1} \KQ
\rTo^{\eta} 
\KQ 
\rTo^{f}
\KGL \rTo \Sigma^{2,1}  \KQ. 
\]
The connecting map factors as $\KGL \rTo^{\iso} \Sigma^{2,1} \KGL\rTo^{\Sigma^{2,1}  h} \Sigma^{2,1} \KQ$.
\end{theorem}

\begin{proof}
We show the map $\KQ\rTo\Omega^{1,1} \KQ$ induced by $\eta$ is determined by the canonical maps 
\[ 
KO\rTo\Sigma^{1,0}UQ
\quad 
USp\sim\Sigma^{1,0}VQ\rTo\Sigma^{1,0} KO 
\]
\[  
KSp\rTo\Sigma^{1,0}USp 
\quad 
UQ\sim\Sigma^{1,0}VSp\rTo\Sigma^{1,0} KSp. 
\]
To that end, 
it suffices to describe the maps
\[
KO(\PP^{1})\rTo KO(\AA^{2}\minus\{0\})  
\quad 
USp(\PP^{1})\rTo USp(\AA^{2}\minus\{0\}) 
\]
\[
KSp(\PP^{1})\rTo KSp(\AA^{2}\minus\{0\})  
\quad 
UQ(\PP^{1})\rTo UQ(\AA^{2}\minus \{0\}) 
\]
induced by the unstable Hopf map $\AA^{2}\minus \{0\}\rTo \PP^{1}$, 
or equivalently, 
see \cite[p.~73 in \S 3.3 and Example 7.26]{Morel:2052},
by the Hopf construction applied to the map
\[
\Upsilon
\colon 
(\AA^{1}\minus \{0\})
\times_{S} 
(\AA^{1}\minus \{0\}) 
\rTo 
\AA^{1}\minus \{0\}, 
(x,y)\rMapsto xy^{-1}.
\]
We note there is an isomorphism of schemes
\[
\theta\colon 
(\AA^{1}\minus\{0\})
\times_{S} 
(\AA^{1}\minus\{0\}) 
\rTo 
(\AA^{1}\minus \{0\})
\times_{S}  
(\AA^{1}\minus \{0\}), 
(x,y)\rMapsto (xy,y).
\]
Now the composite $\Upsilon\theta$ is the projection map on the first factor. 
Thus for $KO$, $USp$, $KSp$ and $UQ$, 
the homotopy cofibers of the maps induced by $\Upsilon$ and the projection map coincide up to weak equivalence. The latter homotopy cofibers
are given in Theorem~\ref{thm:hornbostel-schlichting}.
\end{proof}

\begin{lemma}
\label{lem:c-ring-map}
The unit map $\One \rTo \KGL$ factors as $\One\rTo \KQ\rTo^f\KGL$.
\end{lemma}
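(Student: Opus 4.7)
The plan is to exhibit the factorization explicitly by identifying both unit maps with classes in $\pi_{0,0}$ of the respective ring spectra, and observing that the forgetful functor manifestly relates these classes.

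Both $\KGL$ and $\KQ$ are motivic ring spectra, so their unit maps $e_{\KGL}\colon\One\rTo\KGL$ and $e_{\KQ}\colon\One\rTo\KQ$ correspond, under the identification $[\One,\E]_{\SHH} = \pi_{0,0}\E$, to the multiplicative units $[\mathcal{O}_S]\in K_0(S) = \pi_{0,0}\KGL$ and $\langle 1\rangle\in KO_0(S) = \pi_{0,0}\KQ$, respectively. Here $\langle 1\rangle$ denotes the class of the rank one hermitian module $(\mathcal{O}_S,\mathrm{id})$.

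The core of the argument is that the level-zero component $f_0\colon KO\rTo K$ of $f$ is constructed from the forgetful functor $(M,\phi)\mapsto M$ on hermitian projective modules, so on $\pi_0$ it sends $\langle 1\rangle$ to $[\mathcal{O}_S]$. Hence the composite $f\circ e_{\KQ}\colon\One\rTo\KGL$ represents the class $f_{\ast}(\langle 1\rangle) = [\mathcal{O}_S]$ in $\pi_{0,0}\KGL$, which also represents $e_{\KGL}$. Since a map $\One\rTo\KGL$ in $\SHH$ is determined by the element of $\pi_{0,0}\KGL$ it represents, we conclude $f\circ e_{\KQ} = e_{\KGL}$, giving the desired factorization.

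There is no substantial obstacle here; the only conceptual point is that the unit map of a motivic ring spectrum is classified by its image in $\pi_{0,0}$, and the forgetful functor visibly preserves the unit at the level of $K_0$. One could alternatively apply $[\One,-]$ to the cofiber sequence of Theorem \ref{thm:cofiber-seq} and argue that the image of the unit under the connecting map $\KGL\rTo S^{2,1}\smash\KQ$ vanishes; but this requires analyzing the composite of the Bott periodicity isomorphism with $S^{2,1}\smash h$ on the unit, which is less direct than the verification above.
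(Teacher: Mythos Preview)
Your proof is correct and follows the same idea as the paper: the unit of $\KGL$ is the trivial line bundle, the unit of $\KQ$ is the trivial line bundle with its standard form, and the forgetful functor sends the latter to the former. The paper states this in two sentences; you unpack the same observation by explicitly identifying the unit maps with elements of $\pi_{0,0}$ and noting that $f_0$ is induced by the forgetful functor, but the content is identical.
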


\begin{proof}
The unit map $\One\rTo \KGL$ is given by the trivial line bundle over the base scheme $S$.  
The latter is obtained by forgetting the standard nondegenerate quadratic form on it, 
which provides the factorization.
\end{proof}

Let $\epsilon\colon\One\rTo\One$ be the endomorphism of the sphere spectrum induced by the commutativity isomorphism 
on the smash product $S^{1,1}\smash  S^{1,1}$.
\begin{lemma}
\label{lemma:multiplicationbyfh}
The composition $\KQ\rTo^{f}\KGL\rTo^{h}\KQ$ coincides with multiplication by $1-\epsilon$.
\end{lemma}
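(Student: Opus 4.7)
The plan is to exploit a projection formula for the forgetful/hyperbolic adjunction to reduce the statement to identifying a single class in $\pi_{0,0}\KQ$, and then to recognize this class as the image of $1-\epsilon$ via Morel's computation of $\pi_{0,0}\One$.

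First, I would establish that $h\colon \KGL \to \KQ$ is a map of right $\KQ$-modules, where $\KGL$ is regarded as a $\KQ$-module via the ring map $f$ (cf.~Lemma~\ref{lem:c-ring-map}). At the categorical level this is Frobenius reciprocity: for a quadratic module $(N,q)$ and an ordinary module $M$ there is a natural isomorphism $H(M \otimes f(N,q)) \iso H(M) \otimes (N,q)$ coming from the adjunctions $(h_i,f_i)$ at each level of the structure maps in (\ref{equation:adjointweakequivalences}). At the spectrum level this translates to the identity $h(x \cdot f(y)) = h(x) \cdot y$ for all $x \in \pi_{\ast,\ast}\KGL$ and $y \in \pi_{\ast,\ast}\KQ$. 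Since $f$ is a ring map, the composite $h \circ f \colon \KQ \to \KQ$ is therefore a $\KQ$-module endomorphism, and is thus determined by its value on the unit; explicitly $h \circ f$ equals multiplication by the single element $h(1_{\KGL}) = h(f(1_{\KQ})) \in \pi_{0,0}\KQ$.

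Next, I would identify $h(1_{\KGL})$ with the image of $1-\epsilon \in \pi_{0,0}\One$ in $\pi_{0,0}\KQ$. On the one hand, $h(1_{\KGL})$ is the class of the standard hyperbolic plane $H(\mathcal{O}_S)$ on the trivial line bundle, which in the Grothendieck-Witt ring is $\langle 1 \rangle + \langle -1 \rangle$. On the other hand, Morel's identification of $\pi_{0,0}\One$ with the Grothendieck-Witt ring (over a field, extended to regular bases via base change as in Section~\ref{section:slices}) sends $\epsilon$ to $-\langle -1 \rangle$, so that $1-\epsilon$ corresponds to $1+\langle -1 \rangle$. Because the unit map $\One \to \KQ$ intertwines the ring structures, multiplication on $\KQ$ by $1-\epsilon \in \pi_{0,0}\One$ agrees with multiplication by $h(1_{\KGL}) \in \pi_{0,0}\KQ$, which completes the proof.

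The main obstacle I anticipate is verifying the projection formula rigorously at the level of motivic spectra rather than on underlying categories of modules: one must check that the Bott periodicities $\beta$ and the structure maps in (\ref{equation:adjointweakequivalences}) are compatible with the hyperbolic/forgetful natural transformations in the precise sense required to promote Frobenius reciprocity to a $\KQ$-linearity statement for $h$. A useful reduction is to base change (Lemma~\ref{lem:slice-smooth-base-change}) to a convenient base such as $\Spec\ZZ[\tfrac{1}{2}]$ and invoke naturality of both sides of the claimed identity; alternatively one may verify the identity directly on $1_{\KQ}$ by unwinding the action of $\epsilon$ through the swap on $S^{1,1}\smash S^{1,1}$ and comparing with the explicit hyperbolic-after-forgetful composite on the trivial line bundle.
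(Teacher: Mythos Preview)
Your proposal is correct and follows essentially the same route as the paper: reduce to the value on the unit by using that $h\circ f$ is a $\KQ$-module map, then identify that value with $1-\epsilon$ via Morel's identification $1-\epsilon = 1+\langle -1\rangle$ (the hyperbolic plane). The paper's proof is organized slightly more diagrammatically---it smashes the commuting unit square with $\KQ$ and uses the multiplication---but the content is the same; your projection-formula justification for the $\KQ$-linearity of $h$ is in fact more explicit than the paper, which simply asserts that $h\circ f$ is a $\KQ$-module map without further comment.
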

\begin{proof}
Since $1-\epsilon = 1+\langle-1\rangle$ is the hyperbolic plane \cite[p.~53]{Morel:2052} the unit map for $\KQ$ induces a commutative diagram:  
\begin{diagram}
\One & \rTo & \KQ \\
\dTo^{1-\epsilon} & & \dTo_{h\circ f} \\
\One & \rTo & \KQ
\end{diagram}
By smashing with $\KQ$ and employing its multiplicative structure we obtain the diagram:
\begin{diagram}
\KQ & \rTo & \KQ\smash \KQ & \rTo & \KQ\\
\dTo^{\KQ\smash (1-\epsilon)} & & \dTo
& & \dTo_{h\circ f} \\
\KQ & \rTo & \KQ \smash \KQ & \rTo & \KQ
\end{diagram}
The middle vertical map is $\KQ\smash (h\circ f)$, 
while the two composite horizontal maps coincide with the identity on $\KQ$. 
The right hand square commutes because $h\circ f$ is a map of $\KQ$-modules. 
\end{proof}

\section{Slices}

This section contains a determination of
the slices of hermitian $K$-theory and Witt theory
over any field of characteristic not two. These are the first 
examples of non-orientable motivic spectra 
for which all slices are explicitly known.
Our starting point is the computation of the
slices of algebraic $K$-theory.

\subsection{Algebraic $K$-theory}
\label{section:slicesofktheory}
We recall and augment previous work on the slices of algebraic $K$-theory.
\begin{theorem}[Levine, Voevodsky]
\label{thm:slices-k}
The unit map $\One\rTo\KGL$ induces an isomorphism of zero slices
\[ 
\s_{0}(\One)\rTo \s_{0}(\KGL). 
\]
Hence there is an isomorphism $\s_q(\KGL)\iso \Sigma^{2q,q} \MZZ$ for all $q\in \ZZ$.
\end{theorem}
\begin{proof}
By \cite{Levine:slices}, 
\cite{Voevodsky:motivicss}, 
and \cite{Voevodsky:zero-slice}, 
and base change to any field as in \cite{HKPAO},
the unit $\One\rTo\KGL$ induces a map 
\[ 
\MZZ 
\iso 
\s_{0}(\One)
\rTo 
\s_{0}(\KGL) 
\iso 
\MZZ 
\]
corresponding to multiplication by an integer $i\in\MZZ_{0,0}\iso\ZZ$. 
Now $\s_{0}(\One)\rTo \s_{0}(\KGL)$ is a map of ring spectra by multiplicativity of the slice filtration \cite[Theorem 5.19]{GRSO}, \cite{Pelaez}.
It follows that $i=1$.
Moreover, 
the $q$th power $\beta^{q}\colon S^{2q,q}\rTo\KGL$ of the Bott map induces an isomorphism
\[ 
\Sigma^{2q,q} \MZZ
\iso 
\s_{q}(S^{2q,q}) 
\rTo 
\s_{q}(\KGL). 
\]
\end{proof}

\subsection{Homotopy orbit $K$-theory}
\label{section:slicesofhomotopyorbitktheory}
Let $\PP^{1}$ be pointed at $\infty$.
The general linear group scheme $\GL(2n)$ acquires an involution given by
\[ 
\begin{pmatrix} 
A & B 
\\ C & D 
\end{pmatrix} 
\rMapsto 
\begin{pmatrix} 
D^t & B^t 
\\ C^t & A^t 
\end{pmatrix}^{-1}. 
\]
Geometrically, 
this corresponds to a strictification of the pseudo-involution obtained by sending a vector bundle of rank $n$ to its dual.
These involutions induce the inverse-transpose involution on the infinite general linear group scheme $\GL$ and its classifying 
space $\mathrm{B}\GL$ \cite{Thomason}. 
Letting $C_{2}$ operate trivially on the first factor in $\ZZ\times\mathrm{B}\GL$, 
the involution coincides sectionwise with the unstable Adams operation $\Psi^{-1}$ on the motivic space representing algebraic $K$-theory.
The stable Adams operation
\[
\Psi^{-1}_{\mathrm{st}}
\colon 
\KGL
\rTo 
\KGL
\]
is determined by the structure map $K\rTo\Omega^{2,1}K=\hofib(K(-\times\PP^{1})\rTo^{\infty^{\ast}} K(-))$ obtained from multiplication by the 
class $1-[\OO(-1)]\in K_{0}(\PP^{1})$ of the hyperplane section. 
Note that 
\[ 
\Psi^{-1}(1-[\OO(-1)])
= 
1-[\OO(1)]
= 
1-(1+(1-[\OO(-1)])) 
=
[\OO(-1)]-1 
= 
-(1-[\OO(-1)]). 
\]
Thus, 
up to homotopy,
the stable operation $\Psi^{-1}_{\mathrm{st}}\colon\KGL\rTo\KGL$ is given levelwise on motivic spaces by the formula
\[
\Psi^{-1}_{\mathrm{st},n} 
= 
\begin{cases} 
 \Psi^{-1} & n \equiv 0 \bmod 2 \\
-\Psi^{-1} & n \equiv 1 \bmod 2.
\end{cases} 
\]
Since smashing with $S^{2,1}$ shifts motivic spectra by one index we get
\begin{equation}\label{eq:stable-adams}
\Sigma^{2,1}  \Psi^{-1}_{\mathrm{st}} 
= 
-\Psi^{-1}_{\mathrm{st}}.
\end{equation}
When forming homotopy fixed points and homotopy orbits of $\KGL$ we implicitly make use of a naive $C_{2}$-equivariant motivic spectrum,
i.e., 
a non-equivariant motivic spectrum with a $C_{2}$-action 
(given by $\Psi^{-1}_{\mathrm{st}}$) 
which maps by a levelwise weak equivalence to $\KGL$. 
Recall the Witt-theory spectrum $\KT$ is the homotopy colimit of the 
sequential diagram
\begin{equation}
\label{KTdef}
\KQ 
\rTo^{\eta} 
\Sigma^{-1,-1}\KQ 
\rTo^{\Sigma^{-1,-1}\eta} 
\Sigma^{-2,-2}\KQ 
\rTo^{\Sigma^{-2,-2}\eta} 
\dotsm .
\end{equation}
Note that $\KT$ is a motivic ring spectrum equipped with an evident $\KQ$-algebra structure.
Our notation follows \cite{Hornbostel} and reminds us of the fact that $\KT$ is an example of a "Tate spectrum" or more precisely "geometric fixed point spectrum" via the homotopy cofiber sequence relating it to $\KQ$ and homotopy orbit algebraic $K$-theory \cite{Kobal}.
\begin{theorem}[Kobal]
\label{theorem:kobalsequence}
There is a homotopy cofiber sequence
\[ 
\KGL_{hC_{2}}
\rTo
\KQ 
\rTo 
\KT.
\]
\end{theorem}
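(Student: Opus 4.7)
The plan is to build the map $\KGL_{hC_{2}}\to\KQ$ from the $\Psi^{-1}$-invariance of the hyperbolic functor and then identify its cofiber with $\KT$. Since the hyperbolic form $H(V)=V\directsum V^{*}$ depends on $V$ only up to duality, $h\circ\Psi^{-1}\simeq h$, and the universal property of homotopy orbits produces a map $\bar{h}\colon\KGL_{hC_{2}}\to\KQ$ whose restriction along $\KGL\to\KGL_{hC_{2}}$ recovers $h$.

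Next, I would show the composite $\KGL_{hC_{2}}\xrightarrow{\bar{h}}\KQ\to\KT$ is null. By the universal property this composite corresponds to the $\Psi^{-1}$-invariant map $h$ followed by $\KQ\to\KT$, so it suffices to show that $\KGL\xrightarrow{h}\KQ\to\KT$ is null. The key observation is that $\eta$ acts as zero on $\KGL$: by Bott periodicity and vanishing of negative $K$-theory for regular noetherian base schemes one has $\pi_{1,1}\KGL\iso K_{-1}(S)=0$, so the unit $\One\to\KGL$ sends $\eta$ to zero, and thus $\KGL[\eta^{-1}]\simeq 0$. Since $\KT=\KQ[\eta^{-1}]$, naturality of $\eta$-localization factors $\KGL\to\KQ\to\KT$ through $\KGL[\eta^{-1}]=0$. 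This produces an induced map $\mathrm{cofib}(\bar h)\to\KT$.

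The crux is to verify this induced map is an equivalence, which I would establish by matching two compatible filtrations whose associated gradeds consist of shifts of $\KGL$. The cellular filtration of $\mathcal{E}C_{2}=S^{\infty}$ by $C_{2}$-skeleta induces a filtration of $\KGL_{hC_{2}}$ whose successive cofibers are $\Sigma^{n,0}\KGL$ with attaching maps alternating between $1-\Psi^{-1}$ and $1+\Psi^{-1}$. In parallel, the fiber $F$ of $\KQ\to\KT$ is $\colim_{n}F_{n}$ where $F_{n}=\hofib(\KQ\to\Sigma^{-n,-n}\KQ)$, and applying the octahedral axiom to Theorem~\ref{thm:cofiber-seq} combined with the Bott periodicity isomorphism $\Sigma^{-n,-n}\KGL\iso\Sigma^{n,0}\KGL$ via $\beta^{n}$ identifies each successive quotient $F_{n+1}/F_{n}$ with $\Sigma^{n,0}\KGL$. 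The main obstacle is then showing the attaching maps of the two filtrations coincide: the connecting maps for the $F_{n}$ arise from $f\circ h=1+\Psi^{-1}$, and the sign relation $\Sigma^{2,1}\Psi^{-1}=-\Psi^{-1}$ from \eqref{eq:stable-adams} alternates this to $1\pm\Psi^{-1}$ after each application of Bott periodicity, matching the standard $\KGL_{hC_{2}}$ filtration. Once the filtrations are aligned, induction on filtration degree gives an equivalence at each finite stage, and passing to the colimit yields $\KGL_{hC_{2}}\iso F$, completing the cofiber sequence.
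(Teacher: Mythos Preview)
The paper does not prove this theorem; it is stated with attribution to Kobal and the reference \cite{Kobal}, with no argument given. The only additional information the paper provides is in the introduction, where it confirms (as you do) that the map $\KGL_{hC_2}\to\KQ$ is induced by the hyperbolic map.

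Your outline is a reasonable reconstruction of how such a result is proved, and steps one and two are essentially correct. One small point: to conclude that the composite $\KGL_{hC_2}\to\KQ\to\KT$ is null, it is not enough to check nullity after precomposing with $\KGL\to\KGL_{hC_2}$; the cleaner argument (implicit in what you wrote) is that $\eta$-localization, being a smashing localization, commutes with homotopy colimits, so $(\KGL_{hC_2})[\eta^{-1}]\simeq(\KGL[\eta^{-1}])_{hC_2}\simeq 0$, and any map to the $\eta$-local spectrum $\KT$ factors through this.

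The third step is where your sketch is thinnest. You assert that the attaching maps of the skeletal filtration of $\KGL_{hC_2}$ match those of the filtration $\{F_n\}$ of $\hofib(\KQ\to\KT)$, invoking $f\circ h=1+\Psi^{-1}$ and the sign relation~\eqref{eq:stable-adams}. This is the right shape of argument, but making it precise requires tracking the connecting maps through the octahedral axiom and checking that the comparison map $\KGL_{hC_2}\to F$ you constructed in step two is actually compatible with the two filtrations (not merely that the associated gradeds abstractly agree). Kobal's original argument proceeds via Karoubi's fundamental theorem and the forgetful/hyperbolic fiber sequences rather than by directly comparing cell structures; your filtration approach would work but needs these compatibilities spelled out to be a proof rather than a plausibility argument.
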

The involution on $\KGL$ induces an $\MZZ$-linear involution on the slices $\s_{q}(\KGL)\iso \Sigma^{2q,q}  \MZZ$. 
Suspensions of $\MZZ$ allow only two possible involutions, 
namely the identity (trivial involution) and the multiplication by $-1$ map (nontrivial involution).
This follows because $\MZZ_{0,0}^{\times}\iso\{\pm 1\}$.
\begin{proposition}
\label{prop:involution-slices}
Let $(\KGL,\Psi^{-1}_{\mathrm{st}})$ be the motivic $K$-theory spectrum with its Adams involution.
The induced involution on $\s_{q}(\KGL)\iso \Sigma^{2q,q} \MZZ$ is nontrivial if $q$ is odd and trivial if $q$ is even.
\end{proposition}
\begin{proof}
This follows from (\ref{eq:stable-adams}) and the equality $\s_{0}(\Psi^{-1})=\id_{\MZZ}$. 
For the latter, 
note that the Adams involution and the unit map of $\KGL$ yield commutative diagrams:
\begin{diagram}[width=2cm,height=0.5cm]
& & \KGL & & & \s_{0}(\KGL) \\
& \ruTo^{\iota} & & & \ruTo^{\s_{0}(\iota)} & \\
\One & & \dTo_{\Psi^{-1}_{\mathrm{st}}} & \s_{0}(\One) & & \dTo_{\s_{0}(\Psi^{-1}_{\mathrm{st}})} \\
& \rdTo_{\iota} & & & \rdTo_{\s_{0}(\iota)} & \\
& & \KGL & & & \s_{0}(\KGL)
\end{diagram}
By reference to Theorem~\ref{thm:slices-k} it follows that $\s_{0}(\Psi^{-1}_{\mathrm{st}})=\id_{\MZZ}$.
\end{proof}
 
Next we consider the composite of the hyperbolic and forgetful maps 
\begin{equation}\label{eq:fh}
f\circ h 
= 
\Psi^{1}_{\mathrm{st}}+\Psi^{-1}_{\mathrm{st}}
\colon\KGL\rTo\KGL.
\end{equation}
\begin{proposition}
\label{prop:hyper-forget-slices}
The endomorphism $\s_{q}(fh)$ of $\s_{q}(\KGL)\iso \Sigma^{2q,q}  \MZZ$ 
is multiplication by $2$ if $q$ is even and the trivial map if $q$ is odd.
\end{proposition}
\begin{proof}
Apply Proposition \ref{prop:involution-slices}, 
the additivity of $\s_{q}$ and $\s_{q}(\Psi^{1}_{\mathrm{st}})=\s_{q}(\id_{\KGL})=\id_{\s_{q}(\KGL)}$.
\end{proof}

\begin{lemma}
\label{lemma:sliceshomotopycolimits}
The slice functor $\s_{q}$ commutes with homotopy colimits for all $q\in\ZZ$.
\end{lemma}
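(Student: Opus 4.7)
The plan is to reduce the statement for $\s_q$ to the analogous statement for $\f_q$, and then to exploit compact generation. Since $\SHH$ is a triangulated category in which arbitrary homotopy colimits are built out of (small) coproducts and mapping cones, and since $\s_q$ is triangulated (so it preserves mapping cones automatically), it suffices to verify that $\s_q$ preserves arbitrary coproducts. The defining triangle
\[
\f_{q+1}(\E) \rTo \f_{q}(\E) \rTo \s_{q}(\E) \rTo S^{1,0}\smash \f_{q+1}(\E)
\]
reduces the problem further to showing that each truncation functor $\f_q$ preserves coproducts.

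Writing $\f_q = i_q \circ r_q$, the inclusion $i_q \colon S^{2q,q}\smash\SHH^{\eff} \rInto \SHH$ preserves coproducts because it is the inclusion of a localizing subcategory (coproducts in $S^{2q,q}\smash\SHH^{\eff}$ are computed as coproducts in $\SHH$). So everything comes down to showing that the right adjoint $r_q$ preserves arbitrary coproducts.

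For this I would invoke the standard Neeman-style compactness criterion: a right adjoint between triangulated categories with coproducts preserves coproducts if and only if its left adjoint preserves compact objects. The localizing subcategory $S^{2q,q}\smash\SHH^{\eff}$ is compactly generated by the objects $\Sigma^{2q,q}\Sigma^{\infty}X_{+}$ for $X \in \Sm_S$, and these objects are also compact in the ambient category $\SHH$ (indeed, they are among the standard compact generators of $\SHH$). Hence $i_q$ sends a generating set of compact objects to compacts, and so preserves all compacts. By adjunction, for any family $\{\E_\beta\}$ in $\SHH$ and any compact generator $\mathcal{G}$ of $S^{2q,q}\smash\SHH^{\eff}$ one has
\[
[\mathcal{G},r_{q}(\bigdirectsum_{\beta} \E_\beta)] = [i_{q}\mathcal{G},\bigdirectsum_{\beta}\E_\beta] \iso \bigdirectsum_{\beta}[i_{q}\mathcal{G},\E_\beta] = \bigdirectsum_{\beta}[\mathcal{G},r_{q}\E_\beta],
\]
which forces the canonical map $\bigdirectsum_\beta r_q(\E_\beta) \rTo r_q(\bigdirectsum_\beta \E_\beta)$ to be an isomorphism.

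Combining these observations, $\f_q$ preserves coproducts, hence preserves homotopy colimits, and the defining triangle then shows the same for $\s_q$. The main obstacle is really only bookkeeping: one must confirm that the chosen compact generators of $S^{2q,q}\smash\SHH^{\eff}$ remain compact after the inclusion into $\SHH$. This is well-documented in the motivic setting and is the content of Spitzweck's argument in \cite{Spitzweck:slices}; the rest of the proof is formal from the adjoint functor theorem and standard properties of triangulated categories.
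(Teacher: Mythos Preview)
Your argument is correct and follows essentially the same route as the paper's proof: both reduce the claim to the assertion that $\s_q$ preserves coproducts, and the paper simply cites Spitzweck \cite[Lemma 4.4]{Spitzweck:slices} for this, whereas you unpack the underlying Neeman-style compactness argument (that $i_q$ sends compact generators of $S^{2q,q}\smash\SHH^{\eff}$ to compact objects of $\SHH$, so its right adjoint $r_q$ preserves sums). Your version is a more detailed rendering of the same idea the paper is invoking by reference.
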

\begin{proof}
By a general result for Quillen adjunctions between stable pointed model categories shown by Spitzweck \cite[Lemma 4.4]{Spitzweck:slices},
the conclusion follows because $\s_{q}$ commutes with sums.
\end{proof}

The idea is now to combine Theorem~\ref{thm:slices-k}, 
Proposition~\ref{prop:involution-slices} and Lemma \ref{lemma:sliceshomotopycolimits} to identify the slices of homotopy orbit $K$-theory.
To start with, 
we compute the homotopy orbit spectra of $\MZZ$ for the trivial
involution and the unique nontrivial involution
over a base scheme essentially smooth over a  
field. 
These computations are parallel to the corresponding
computations for the topological integral Eilenberg-MacLane
spectrum $H\ZZ$, as the proofs suggest. 
In the case where $F$ is a subfield of the complex
numbers, complex realization -- which is compatible with
homotopy colimits, and sends $\MZZ$ to $H\ZZ$ by \cite[Lemma 5.6]{Levine:comparison}
 -- maps
the motivic computation to the topological one.
\begin{lemma}
\label{lem:mz-triv-inv}
With the trivial involution on motivic cohomology there is an isomorphism
\[ 
(\MZZ,\id)_{hC_{2}}  
\iso 
\MZZ\vee\bigvee_{i=0}^{\infty} \Sigma^{2i+1,0}  \MZZ/2. 
\] 
\end{lemma}
\begin{proof}
If $\E$ is equipped with an involution then $\E_{hC_{2}}\iso\bigl(\E\smash (EC_{2})_{+}\bigr)_{C_{2}}$, 
where $EC_{2}$ is a contractible simplicial set with a free $C_{2}$-action considered as a constant motivic space.
In the case of $(\MZZ,\id)$, 
\[ 
(\MZZ,\id)_{hC_{2}} 
\iso 
\bigl(\MZZ\smash (EC_{2})_{+}\bigr)_{C_{2}} 
\iso 
\MZZ\smash (EC_{2})_{+}/C_{2}
\iso  
\MZZ\smash\mathbb{RP}^{\infty}_{+}.
\]
Since $S^{n,0}\rTo \mathbb{RP}^{n}\rTo\mathbb{RP}^{n+1}$ is a homotopy cofiber sequence, 
so is 
\begin{equation}
\label{RPncofibration} 
\Sigma^{n,0} \MZZ
\rTo 
\MZZ\smash \mathbb{RP}^{n} 
\rTo 
\MZZ\smash \mathbb{RP}^{n+1}.
\end{equation}
Clearly $\MZZ\smash\mathbb{RP}^{2}\cong \Sigma^{1,0} \MZZ/2$.
Proceeding by induction on (\ref{RPncofibration}) using Lemma \ref{lemma:MZtoMZ/2weightzero},
we obtain
\[
\MZZ\smash \mathbb{RP}^{n}
\cong
\begin{cases} 
\bigvee_{i=0}^{n-2} \Sigma^{2i+1,0}  \MZZ/2 & n \equiv 0 \bmod 2 \\
\Sigma^{n,0}  \MZZ\vee \bigvee_{i=0}^{n-3} \Sigma^{2i+1,0}  \MZZ/2 & n \equiv 1 \bmod 2.
\end{cases} 
\]
\end{proof}

\begin{lemma}
\label{lem:mz-nontriv-inv}
With the nontrivial involution $\sigma$ on motivic cohomology there is an isomorphism
\[ 
(\MZZ,\sigma)_{hC_{2}} 
\iso 
\bigvee_{i=0}^{\infty} \Sigma^{2i,0}  \MZZ/2. 
\]
\end{lemma}
\begin{proof}
The nontrivial involution $\sigma$ on $\MZZ$ is determined levelwise by the nontrivial involution on 
\[
\MZZ_{n}  
= 
K(\ZZ,2n,n) 
= 
\ZZ^{\tr}(S^{2n,n}).
\]
Here, 
$\ZZ^\tr$ sends a motivic space $\mathcal{X}$ to the motivic space with transfers freely generated by $\mathcal{X}$,
cf.~\cite[Example 3.4]{DRO:motivic}.
On the level of motivic spaces the nontrivial involution is induced by a degree $-1$ pointed map of the simplicial circle $S^{1,0}$. 
Since $\ZZ^{\tr}$ commutes with homotopy colimits \cite[\S 2]{RO:mz} we are reduced to identify
\[
\hocolim_{C_{2}} S^{2n,n}.
\]
When $n=0$ the homotopy colimit is contractible.
When $n>0$ there are isomorphisms
\[
\hocolim_{C_{2}} S^{2n,n}
\iso 
\hocolim_{C_{2}} \Sigma^{2n-1,n}  S^{1,0}
\iso 
\Sigma^{2n-1,n}  (\hocolim_{C_{2}} S^{1,0})
\iso 
\Sigma^{2n-1,n} \mathbb{RP}^{\infty}. 
\]
By passing to the sphere spectrum the above yields an isomorphism
\[ 
\hocolim_{C_{2}}\One 
\iso 
\Sigma^{-1,0} \mathbb{RP}^{\infty}.
\]
Finally, 
applying transfers and arguing as in the proof of Lemma \ref{lem:mz-triv-inv}, 
we deduce the isomorphism
\[ 
\hocolim_{hC_{2}}(\MZZ,\sigma) 
\cong 
\bigvee_{i=0}^{\infty} \Sigma^{2i,0}  \MZZ/2. 
\]
\end{proof}

\begin{theorem}
\label{thm:slices-homotopy-orbit-K}
Suppose $F$ is a field equipped 
with the trivial involution and $\Char(F)\neq 2$. 
The slices of homotopy orbit $K$-theory are given by
\[ 
\s_{q}(\KGL_{hC_{2}}) 
\iso 
\begin{cases} 
\Sigma^{2q,q}  \MZZ \vee \bigvee_{i=\frac{q}{2}}^{\infty} \Sigma^{q+2i+1,q}  \MZZ/2 & q\equiv 0\bmod 2 \\
\bigvee_{i=\frac{q-1}{2}}^{\infty} \Sigma^{q+2i+1,q}  \MZZ/2 & q\equiv 1\bmod 2.
\end{cases}
\]
\end{theorem}
\begin{proof}
This follows from Theorem~\ref{thm:slices-k}, Proposition \ref{prop:involution-slices} and Lemmas \ref{lemma:sliceshomotopycolimits}, \ref{lem:mz-triv-inv}, \ref{lem:mz-nontriv-inv}.
\end{proof}

\subsection{Hermitian $K$-theory}
\label{section:slic-herm-algebr}
Throughout this section $F$ is a field of $\Char(F)\neq 2$.
\begin{corollary}
\label{cor:split-zeroslice-ko}
There is a splitting of $\MZZ$-modules
\[ 
\s_{0}(\KQ)
\iso 
\s_{0}(\KGL)\vee\mu
\]
which identifies $\s_0(f)$ with the projection map onto $\s_0(\KGL)$.
\end{corollary}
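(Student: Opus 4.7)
The plan is to produce a section of $\s_0(f)$ directly from the factorization of the unit map given by Lemma~\ref{lem:c-ring-map}. Writing $u_\KQ\colon\One\rTo\KQ$ and $u_\KGL\colon\One\rTo\KGL$ for the unit maps, that lemma asserts $u_\KGL = f\circ u_\KQ$. Applying the slice functor $\s_0$ yields the commutative triangle
\[
\s_0(u_\KGL) \;=\; \s_0(f)\circ\s_0(u_\KQ)\colon\s_0(\One)\rTo\s_0(\KQ)\rTo\s_0(\KGL).
\]
By Theorem~\ref{thm:slices-k} (together with the identification $\s_0(\One)\iso\MZZ$), the composite $\s_0(u_\KGL)$ is an isomorphism $\MZZ\rTo^{\iso}\MZZ$. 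Consequently $s \defeq \s_0(u_\KQ)\circ\s_0(u_\KGL)^{-1}$ is a section of $\s_0(f)$.

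I would then define $\mu$ to be the homotopy cofiber of $s\colon\s_0(\KGL)\rTo\s_0(\KQ)$ (equivalently, the homotopy fiber of $\s_0(f)$). The distinguished triangle
\[
\s_0(\KGL)\rTo^{s}\s_0(\KQ)\rTo^{\s_0(f)}\s_0(\KGL)\rTo S^{1,0}\smash\s_0(\KGL)
\]
splits because $\s_0(f)\circ s=\id$, producing an isomorphism $\s_0(\KQ)\iso\s_0(\KGL)\vee\mu$ under which $\s_0(f)$ corresponds to the projection onto the first summand.

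The remaining point is to upgrade this decomposition to one of $\MZZ$-modules. By the results of Pelaez and GRSO recalled in Section~\ref{section:slices}, every slice is canonically an $\s_0(\One)=\MZZ$-module, and slices of maps of $\One$-module spectra (in particular of the ring spectrum maps $u_\KQ$, $u_\KGL$ and $f$) are $\MZZ$-linear. Hence both $s$ and $\s_0(f)$ are $\MZZ$-linear, and $\mu$ inherits a canonical $\MZZ$-module structure for which the splitting is an isomorphism of $\MZZ$-modules. The main conceptual step is the first one --- extracting the section from the factorization of the unit map --- while the rest is a formal consequence of the triangulated $\MZZ$-module structure on the target of $\s_0$.
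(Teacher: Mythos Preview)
Your approach is essentially identical to the paper's: both use Lemma~\ref{lem:c-ring-map} and Theorem~\ref{thm:slices-k} to exhibit $\s_0(\KGL)$ as a retract of $\s_0(\KQ)$ via the section $\s_0(u_\KQ)\circ\s_0(u_\KGL)^{-1}$, and then invoke that retracts are direct summands in a triangulated category (the paper phrases this in one line, while you spell out the $\MZZ$-linearity). One small correction: the displayed sequence $\s_0(\KGL)\rTo^{s}\s_0(\KQ)\rTo^{\s_0(f)}\s_0(\KGL)\rTo S^{1,0}\smash\s_0(\KGL)$ is not itself a distinguished triangle, since the cofiber of $s$ is $\mu$, not $\s_0(\KGL)$; the argument you want is that the genuine distinguished triangle $\mu\rTo\s_0(\KQ)\rTo^{\s_0(f)}\s_0(\KGL)\rTo S^{1,0}\smash\mu$ splits because $\s_0(f)$ admits the section $s$.
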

\begin{proof}
Lemma~\ref{lem:c-ring-map} shows the unit of $\KQ$ and the forgetful map to $\KGL$ furnish a factorization 
\[
\One\rTo^{\iota}\KQ\rTo^{f} \KGL
\]
of the unit of $\KGL$. 
By Theorem~\ref{thm:slices-k} the composite
\[ 
\s_{0}(\One)
\rTo^{\s_{0}(\iota)} 
\s_{0}(\KQ)
\rTo^{\s_{0}(f)} 
\s_{0}(\KGL)
\]
is an isomorphism.
The desired splitting follows since retracts are direct summands in $\SHH$.
\end{proof}
\begin{proposition}
\label{prop:mult-2} 
The composite $\KQ\rTo^{f}\KGL\rTo^{h}\KQ$ induces the multiplication by $2$ map on $\s_{q}(\KQ)$ for all $q\in\ZZ$.
\end{proposition}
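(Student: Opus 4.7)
The plan is to combine Lemma~\ref{lemma:multiplicationbyfh} with the $\MZZ$-module structure on slices, reducing the question to an elementary computation in the Grothendieck--Witt ring.

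First I would invoke Lemma~\ref{lemma:multiplicationbyfh}, which identifies $h\circ f\colon \KQ\rTo\KQ$ with multiplication by $1-\epsilon\in\pi_{0,0}(\One)$. Concretely, this is the composite
\[
\KQ \iso \One\smash \KQ \rTo^{(1-\epsilon)\smash\id} \One\smash\KQ \iso \KQ,
\]
arising from the $\One$-module structure on $\KQ$ via its unit map.

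Next I would appeal to the multiplicativity of the slice filtration (Pelaez, \cite{GRSO}): each $\s_q(\KQ)$ is naturally a module over the motivic ring spectrum $\s_0(\One)$, which is $\MZZ$ by Theorem~\ref{thm:slices-k}. Functoriality of $\s_q$ applied to the above composite shows that $\s_q(h\circ f)$ is multiplication on $\s_q(\KQ)$ by the image of $1-\epsilon$ under the canonical ring homomorphism
\[
\pi_{0,0}(\One)\rTo \pi_{0,0}(\s_0(\One))\iso\ZZ,
\]
where the target is $H^{0,0}(F,\ZZ)=\ZZ$ since $F$ is a perfect field.

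Finally I would identify this homomorphism with the rank. By Morel, $\pi_{0,0}(\One)\iso GW(F)$, and a ring homomorphism $GW(F)\rTo \ZZ$ sending $\langle 1\rangle$ to $1$ is necessarily the dimension (rank) map. As recorded in the proof of Lemma~\ref{lemma:multiplicationbyfh}, the element $1-\epsilon = \langle 1\rangle + \langle -1\rangle$ is the hyperbolic plane, whose rank equals $2$. Hence $\s_q(h\circ f)$ is multiplication by $2$ on $\s_q(\KQ)$ for every $q\in\ZZ$.

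The only genuine subtlety is the bookkeeping in the middle step: one must be sure that $\s_q$ of multiplication by an element $\alpha\in\pi_{0,0}(\One)$ on a $\One$-module genuinely factors as multiplication by the image of $\alpha$ in $\pi_{0,0}(\s_0(\One))$ on the $\s_0(\One)$-module $\s_q(\KQ)$. This is formal once one has the pairing $\s_0(\One)\smash\s_q(\KQ)\rTo\s_q(\KQ)$ supplied by multiplicativity of the slice filtration, together with compatibility with the unit maps; beyond this, the argument is essentially tautological.
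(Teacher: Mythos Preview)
Your strategy is the same as the paper's: invoke Lemma~\ref{lemma:multiplicationbyfh} to identify $hf$ with multiplication by $1-\epsilon$, use the $\s_0(\One)\cong\MZZ$-module structure on the slices to reduce to computing the image of $1-\epsilon$ under the ring map $[\One,\One]\to[\MZZ,\MZZ]$, and then check that this image is $2$.

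There is, however, a genuine gap in your final identification. The claim that a ring homomorphism $GW(F)\to\ZZ$ sending $\langle 1\rangle$ to $1$ is \emph{necessarily} the rank map is false whenever $F$ admits an ordering: any signature map is also such a homomorphism, and it sends the hyperbolic plane $\langle 1\rangle+\langle -1\rangle$ to $0$, not $2$. So uniqueness cannot be the reason the map in question is the rank. You need an independent argument that the canonical map $\pi_{0,0}(\One)\to\pi_{0,0}\s_0(\One)\cong\ZZ$ is the rank; for instance, naturality in $F$ together with base change to an algebraic closure (compare the proof of Lemma~\ref{lem:f1-ideal}) does the job.

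The paper sidesteps this entirely by computing the image of $\epsilon$ directly: under passage to motives, $\epsilon$ becomes the twist isomorphism on $\ZZ(1)[1]\otimes\ZZ(1)[1]$, and Voevodsky \cite[Corollary~2.1.5]{Voevodsky:dm} shows the twist on $\ZZ(1)\otimes\ZZ(1)$ is the identity. The simplicial shift contributes the sign, so $\epsilon\mapsto -1$ and $1-\epsilon\mapsto 2$. This avoids invoking Morel's identification of $\pi_{0,0}(\One)$ with $GW(F)$ altogether and is both shorter and more self-contained.
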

\begin{proof}
The unit of $\MZZ$ induces an isomorphism on zero slices by Theorem \ref{thm:slices-k}.
Since $\MZZ\in\SHH^\mathrm{eff}$ is an effective motivic spectrum, 
the counit $\f_{0}(\MZZ)\rTo\MZZ$ is an isomorphism. 
Thus there is a canonical isomorphism of ring spectra
\[
\MZZ
\iso 
\f_{0}(\MZZ)
\rTo 
\s_{0}(\MZZ)
\iso 
\MZZ.
\]
It follows that smashing with $\MZZ$,
i.e., 
passing to motives \cite{RO:cr}, \cite{RO:mz}, 
induces a canonical ring map 
\[ 
[\One,\One]
\rTo^{}_{} 
{[\s_{0}(\One),\s_{0}(\One)]}
\iso [\MZZ,\MZZ].
\]
Lemma \ref{lemma:multiplicationbyfh} shows the composite $hf$ coincides with $1-\epsilon$.
Its image in motives is multiplication by $2$ because the twist isomorphism of $\ZZ(1)\tensor \ZZ(1)$ is the identity map 
\cite[Corollary 2.1.5]{Voevodsky:dm}.
\end{proof}
  
\begin{corollary}
In the splitting $\s_{0}(\KQ)\iso \s_{0}(\KGL)\vee\mu$ the homotopy groups $\pi_{s,t}\mu$ are modules over $\pi_{0,0}\MZZ/2\iso \FF_{2}$.
\end{corollary}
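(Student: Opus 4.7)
The plan is to show that multiplication by $2$ is the zero endomorphism of $\mu$, from which the $\FF_{2}$-module structure on each $\pi_{s,t}\mu$ follows immediately since $\mu$ inherits an $\MZZ$-module structure as a direct summand of $\s_{0}(\KQ)$ in $\SHH^{\eff}$.

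Concretely, I would combine the two previous results in this subsection. By Corollary~\ref{cor:split-zeroslice-ko} the map $\s_{0}(f)\colon\s_{0}(\KQ)\rTo\s_{0}(\KGL)$ is, under the splitting $\s_{0}(\KQ)\iso\s_{0}(\KGL)\vee\mu$, the projection onto the first summand; in particular its restriction to $\mu$ is the zero map. Consequently
\[
\s_{0}(h\circ f)\big|_{\mu}
=
\s_{0}(h)\circ \s_{0}(f)\big|_{\mu}
=
0.
\]
On the other hand, Proposition~\ref{prop:mult-2} identifies $\s_{0}(h\circ f)$ with multiplication by $2$ on $\s_{0}(\KQ)$. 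Restricting this identity to the summand $\mu$ gives that multiplication by $2$ on $\mu$ is zero.

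Now $\mu$ is naturally a module over $\s_{0}(\One)\iso\MZZ$, and the factorization of the $\MZZ$-action through the quotient $\MZZ\rTo\MZZ/2$ makes $\mu$ an $\MZZ/2$-module in $\SHH$. Taking bigraded homotopy groups yields the desired $\pi_{0,0}\MZZ/2\iso\FF_{2}$-module structure on $\pi_{s,t}\mu$. The only non-routine ingredient is bookkeeping the compatibility of the slice-level splitting with the $\MZZ$-module structure, but this is immediate from the fact that the splitting comes from a retraction in $\SHH$ of $\MZZ$-modules (Theorem~\ref{thm:slices-k} identifies $\s_{0}(\KGL)$ with $\MZZ$ and the unit $\iota$ produces the retraction used in Corollary~\ref{cor:split-zeroslice-ko}).
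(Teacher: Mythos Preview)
Your proposal is correct and follows exactly the route the paper takes: the paper's proof is the single sentence ``This follows from Corollary~\ref{cor:split-zeroslice-ko} and Proposition~\ref{prop:mult-2},'' and you have simply unpacked that citation. Your extra remarks about $\mu$ acquiring an $\MZZ/2$-module structure in $\SHH$ go slightly beyond what the Corollary asserts, but the core argument---$\s_0(f)$ kills $\mu$, hence $\s_0(hf)|_\mu=0$, hence $2\cdot\id_\mu=0$---is precisely what is intended.
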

\begin{proof}
This follows from Corollary \ref{cor:split-zeroslice-ko} and Proposition \ref{prop:mult-2}.
\end{proof}

The slice functors are triangulated. 
Thus the homotopy cofiber sequence in Theorem \ref{thm:cofiber-seq} induces homotopy cofiber sequences of slices. 

\begin{lemma}
\label{lem:s1-ko}
There is a distinguished triangle of $\MZZ$-modules
\[ 
\Sigma^{1,1}  \MZZ 
\rTo^{\Sigma^{1,1}  (2,0)} 
\Sigma^{1,1}  (\MZZ\vee \mu) 
\rTo \s_{1}(\KQ)
\rTo 
\Sigma^{2,1}  \MZZ, 
\]
where $2\in\ZZ\iso\MZZ_{0,0}$ and $0\in\pi_{0,0}\mu$.
\end{lemma}
\begin{proof}
The distinguished triangle follows from Theorems \ref{thm:cofiber-seq}, 
\ref{thm:slices-k},
Lemma \ref{lemma:shift-slices} and Corollary \ref{cor:split-zeroslice-ko}.
Since induced maps between slices are module maps over $\s_{0}(\One)\iso \MZZ$,
we have
\[ 
\Hom_{\MZZ}(\Sigma^{1,1}  \MZZ,\Sigma^{1,1}  (\MZZ\vee \mu)) 
\iso
\Hom_{\SHH}(\One,\MZZ\vee \mu) 
\iso 
\MZZ_{0,0}\directsum\pi_{0,0}\mu. 
\]
This shows $\MZZ\rTo \MZZ\vee \mu$ is of the form $(a,\alpha)\in\ZZ\directsum \pi_{0,0}\mu$. 
By the proof of Corollary~\ref{cor:split-zeroslice-ko},
$\s_{0}(f)$ is the projection map onto the direct summand $\s_{0}(\KGL)$. 
The connecting map in the distinguished triangle identifies with the $(1,1)$-shift of the hyperbolic map $h$. 
It follows that $\s_{-1}(h)=0$. 
By Proposition~\ref{prop:mult-2},
$\s_{0}(hf)$ is multiplication by $2$. 
Thus $a=2$ and $\pi_{0,0}\s_{0}(\KGL)\rTo \pi_{0,0}\s_{0}(\KQ)$ is injective. 
Consider the short exact sequence
\[ 
0 
\rTo 
\pi_{0,0}\s_{0}(\KGL) 
\rTo 
\pi_{0,0}\s_{0}(\KQ) 
\rTo 
\pi_{0,0}\cone({2,\alpha})
\rTo 
\pi_{-1,0}\s_{0}\KGL=0. 
\]
Proposition~\ref{prop:mult-2} shows that $\s_{1}(\KQ)\rTo \s_{1}(\KGL) \rTo \s_{1}(\KQ)$ is multiplication by $2$.
Hence the same holds for the induced map $\pi_{0,0}\cone({2,\alpha})\rTo \pi_{0,0} \Sigma^{1,0}  \MZZ = 0\rTo\pi_{0,0}\cone({2,\alpha})$.
It follows that $\pi_{0,0}\cone({2,\alpha})$ is an $\FF_{2}$-module. 
We note that the image of $(1,\alpha)$ has order $4$ unless $\alpha=0$:
Write $\pi_{0,0}\cone({2,\alpha})$ as the cokernel of $(2,\alpha)\colon\ZZ\rTo\ZZ\directsum A$, 
$A$ an $\FF_{2}$-module. 
Its subgroup generated by $\overline{(1,\alpha)}$ is
$\{\overline{(1,\alpha)},\overline{(2,0)},\overline{(3,\alpha)},\overline{(4,0)}=2\overline{(2,\alpha)} = 0\}$, 
and if $\alpha\neq 0$, 
then $\overline{(2,0)}\neq 0$.
\end{proof}

\begin{corollary}
\label{cor:s1-ko}
There is an isomorphism $\s_{1}(\KQ) \iso \Sigma^{1,1} (\MZZ/2\vee \mu)$ and $\s_1(f)$ coincides with the composite map
\[ 
\s_{1}(\KQ)\iso \Sigma^{1,1}  (\MZZ/2 \vee \mu) 
\rTo^{\pr} 
\Sigma^{1,1} \MZZ/2 \rTo^{\delta} \Sigma^{2,1}  \MZZ 
\iso 
\s_{1}(\KGL).\]
\end{corollary}
\begin{proof}
This follows from Lemma~\ref{lem:s1-ko}.
\end{proof}

\begin{lemma}\label{lem:s1h}
The map $\s_{1}(h)\colon \s_1(\KGL)\rTo \s_{1}\KQ$ is trivial.
\end{lemma}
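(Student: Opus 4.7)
The plan is to reduce $\s_1(h)$ to a single unknown component $\xi \in \pi_{1,0}\mu$ and then show that the $2$-torsion of $\mu$, combined with the Bockstein long exact sequence, forces $\xi=0$.

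First I would use Theorem \ref{thm:slices-k} and Corollary \ref{cor:s1-ko} to rewrite the map as $\s_1(h) \colon \Sigma^{2,1}\MZZ \rTo \Sigma^{1,1}\MZZ/2 \vee \Sigma^{1,1}\mu$. As a map of $\MZZ$-modules it splits into two components. The component into $\Sigma^{1,1}\MZZ/2$ lives in $\Hom_{\MZZ}(\Sigma^{2,1}\MZZ, \Sigma^{1,1}\MZZ/2) \iso \pi_{1,0}\MZZ/2 = h^{-1,0} = 0$ and is therefore automatically trivial. The task reduces to showing that the remaining component $\xi \in \Hom_{\MZZ}(\Sigma^{2,1}\MZZ, \Sigma^{1,1}\mu) \iso \pi_{1,0}\mu$ vanishes.

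Next I would invoke Proposition \ref{prop:mult-2}: the composite $\s_1(h)\circ\s_1(f) = \s_1(hf)$ is multiplication by $2$ on $\s_1(\KQ)$. Applied at slice zero the same proposition, together with $\s_0(f)|_{\mu}=0$, forces $2\cdot \id_{\mu} = \s_0(h)\circ\s_0(f)|_{\mu} = 0$ as an endomorphism of spectra; in particular both wedge summands of $\s_1(\KQ)$ are annihilated by $2$, so $\s_1(h)\circ\s_1(f) = 0$. Substituting $\s_1(f) = \delta\circ\pr_{\MZZ/2}$ from Corollary \ref{cor:s1-ko} and $\s_1(h) = (0,\xi)$ into the resulting $2\times 2$ matrix, the only possibly nonzero entry is $\xi\circ\delta$, yielding the relation $\xi\circ\delta = 0$.

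The last step is to apply $\Hom_{\MZZ}(-,\mu)$ to the Bockstein cofiber sequence $\MZZ \rTo^{2} \MZZ \rTo \MZZ/2 \rTo^{\delta} \Sigma^{1,0}\MZZ$. The resulting long exact sequence identifies the kernel of precomposition $\delta^{\ast}\colon \pi_{1,0}\mu \rTo \Hom_{\MZZ}(\MZZ/2,\mu)$ with $2\cdot\pi_{1,0}\mu$, which is zero because $\mu$ is $2$-torsion. Hence $\delta^{\ast}$ is injective and $\xi\circ\delta = 0$ forces $\xi = 0$. I expect the main subtlety to be verifying that multiplication by $2$ annihilates $\mu$ \emph{as a map of spectra} and not merely on homotopy groups, since the injectivity of $\delta^{\ast}$ depends on it; this falls out cleanly from Proposition \ref{prop:mult-2} combined with the explicit form $\s_0(h) = (2,0)$ extracted in the proof of Lemma \ref{lem:s1-ko}.
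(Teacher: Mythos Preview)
Your proof is correct and follows essentially the same approach as the paper. Both arguments kill the $\MZZ/2$-component by degree reasons, deduce $\xi\circ\delta=0$ from $\s_1(hf)=2=0$ on the $2$-torsion target, and then use the Bockstein cofiber sequence together with $2\cdot\id_\mu=0$ to conclude $\xi=0$; the paper phrases this last step as ``$\beta$ factors through multiplication by $2$, hence is zero,'' which is the same exact-sequence reasoning you write out explicitly, and your justification that $2\cdot\id_\mu=0$ holds on the level of spectra (not just homotopy groups) is a useful point that the paper leaves implicit.
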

\begin{proof}
The first component of
\[ 
\s_{1}(h)\colon 
\s_1(\KGL)
\rTo 
\s_{1}(\KQ)\iso \Sigma^{1,1}  (\MZZ/2 \vee \mu)
\]
is trivial for degree reasons by Lemma \ref{lemma:MZtoMZ/2weightzero}.
Thus $\s_{1}(h)$ corresponds to the transpose of the matrix $\begin{pmatrix} 0 & \beta \end{pmatrix}$.
By Corollary~\ref{cor:s1-ko}, 
$\s_{1}(f)$ corresponds to the matrix $\begin{pmatrix} \delta & 0\end{pmatrix}$.
Proposition~\ref{prop:mult-2} implies that
\[ 
\s_{1}(hf) 
= 
\begin{pmatrix} 
0 & 0 \\ 0 & \beta\delta 
\end{pmatrix}
\]
is the multiplication by $2$ map,
hence trivial on $\Sigma^{1,1}  (\MZZ/2\vee \mu)$. 
We conclude that $\beta\delta =0$.
Thus $\beta$ factors as
\[ 
\Sigma^{2,1}  \MZZ 
\rTo^{2} \Sigma^{2,1}  \MZZ 
\rTo^{\beta^\prime} 
\Sigma^{1,1}  \mu, 
\]
which coincides with
\[ 
\Sigma^{2,1}  \MZZ 
\rTo^{\beta^\prime} 
\Sigma^{1,1}  \mu 
\rTo^2 \Sigma^{1,1}  \mu.
\]
Since the multiplication by $2$ map on $\mu$ is trivial, 
the result follows.
\end{proof}

\begin{corollary}
\label{cor:s2-ko}
There is an isomorphism
\[  
\s_{2}(\KQ)
\rTo^{\iso} 
\Sigma^{4,2}  \MZZ\vee \Sigma^{2,2}  (\MZZ/2\vee \mu) 
\]
which identifies $\s_2(f)$ with the projection map onto $\Sigma^{4,2}  \MZZ$.
\end{corollary}
\begin{proof}
Theorem \ref{thm:slices-k} and Corollary \ref{cor:s1-ko} show that, 
up to isomorphism,
Theorem \ref{thm:cofiber-seq} gives rise to the distinguished triangle
\[ 
\Sigma^{3,2}  \MZZ 
\rTo^{\Sigma^{1,1}  \s_1(h)} 
\Sigma^{2,2}  (\MZZ/2\vee \mu) 
\rTo 
\s_{2}(\KQ) 
\rTo^{\s_2(f)} 
\Sigma^{4,2}  \MZZ. \]
Lemma \ref{lem:s1h} implies that  $\s_1(h)$ is the trivial map.
\end{proof}

\begin{lemma}
\label{lem:s2h}
The map $\s_{2}(h)\colon \s_{2}(\KGL)\rTo \s_{2}(\KQ)$ coincides with the composite 
\[ 
\Sigma^{4,2}  \MZZ
\rTo^{(2,0)}
\Sigma^{4,2}  \MZZ\vee \Sigma^{2,2}  (\MZZ/2\vee \mu). 
\]
\end{lemma}
\begin{proof}
Corollary \ref{cor:s2-ko} identifies $\s_{2}(f)$ with the projection map onto $\Sigma^{4,2}  \MZZ$. 
Since $\s_2(fh)=2$ by Proposition~\ref{prop:hyper-forget-slices}, 
$\s_{2}(h)=(2,\gamma_1,\gamma_2)$ where 
\[ 
(\gamma_1,\gamma_2)
\colon 
\Sigma^{4,2} \MZZ
\rTo 
\Sigma^{2,2} (\MZZ/2\vee \mu).
\]
Note that $\gamma_1=0$ by Lemma~\ref{lemma:MZtoMZ/2weightzero}. 
Similarly $\s_{2}(hf)=2$ by Proposition~\ref{prop:mult-2}. 
Using the matrix 
\[ 
\s_{2}(hf) 
= 
\begin{pmatrix} 
2 & 0 & 0 \\ 0 & 0 & 0 \\ \gamma_2 & 0 & 0 
\end{pmatrix}\]
we conclude that $\gamma_2=0$.
\end{proof}

\begin{corollary}\label{cor:s3-ko}
There is an isomorphism
\[ 
\s_{3}(\KQ)
\iso  
\Sigma^{5,3}  \MZZ/2 \vee \Sigma^{3,3}  (\MZZ/2\vee \mu). 
\]
Moreover,
$\s_{3}(f)$ coincides with the composite map
\[ 
\s_{3}(\KQ)
\iso 
\Sigma^{5,3}  \MZZ/2 \vee \Sigma^{3,3}  (\MZZ/2 \vee \mu) 
\rTo^{\pr} 
\Sigma^{5,3}  \MZZ/2 
\rTo^{\delta} \Sigma^{6,3}  \MZZ 
\iso 
\s_{3}(\KGL).\]
\end{corollary}
\begin{proof}
Apply $\s_{3}$ to the Hopf cofiber sequence in Theorem \ref{thm:cofiber-seq} and use Lemma~\ref{lem:s2h}.
\end{proof}

\begin{theorem}
\label{thm:slices-ko}
The slices of the hermitian $K$-theory spectrum $\KQ$ are given by
\[  
\s_{q}(\KQ)
=
\begin{cases}
\bigl(\Sigma^{2q,q}  \MZZ\bigr) 
\vee 
\bigl(\Sigma^{q,q}  \Mmu\bigr) 
\vee
\bigvee_{i<\frac{q}{2}} \Sigma^{2i+q,q}  \MZZ/2 & q\equiv 0 \bmod 2 \\
\bigl(\Sigma^{q,q}  \Mmu\bigr) 
\vee 
\bigvee_{i<\frac{q+1}{2}} \Sigma^{2i+q,q}  \MZZ/2 & q\equiv 1 \bmod 2.
\end{cases}
\]
Here $\Mmu\cong \Sigma^{4,0} \Mmu$ and $\Mmu_{s,t}$ is an $\FF_{2}$-module for all integers $s$ and $t$.
\end{theorem}
\begin{proof}
Corollary \ref{cor:s3-ko} identifies the third slice
\[ 
\s_{3}(\KQ)
\iso  
\Sigma^{5,3}  \MZZ/2 \vee \Sigma^{3,3}  (\MZZ/2\vee \mu). 
\]
On the other hand, 
Karoubi periodicity and Corollary~\ref{cor:split-zeroslice-ko} imply the isomorphism
\[ 
\s_{3}(\KQ)
\iso 
\s_{3}(\Sigma^{8,4}  \KQ) 
\iso 
\Sigma^{8,4}  s_{-1}(\KQ)
\iso 
\Sigma^{7,3}  \mu.
\]
It follows that $\Sigma^{-2,0}  \MZZ \vee \Sigma^{-4,0}  \MZZ$ is a direct summand of $\mu$. 
Iterating the procedure furnishes an isomorphism
\[ 
\mu 
\iso 
\Mmu \vee \bigvee_{i<0} \Sigma^{-2i,0}  \MZZ/2, 
\]
where $\Mmu$ is simply the complementary summand. The result follows.
\end{proof}

In the next section we show the mysterious summand $\Mmu$ of $\s_{0}(\KQ)$ is trivial.
The following summarizes some of the main observations in this section.
\begin{proposition}
\label{prop:slices-forget-hyper}
The map $\s_{2q}(f)\colon \s_{2q}(\KQ) \rTo \s_{2q}(\KGL)$ is the projection onto $\Sigma^{4q,2q}  \MZZ$,
$\s_{2q+1}(f)\colon \s_{2q+1}(\KQ) \rTo \s_{2q+1}(\KGL)$ is the projection map onto $\Sigma^{4q+1,2q+1}  \MZZ/2$ composed with 
$\delta\colon \Sigma^{4q+1,2q+1}  \MZZ/2\rTo \Sigma^{4q+2,2q+1}  \MZZ$.
The map $\s_{i}(h)\colon \s_{i}(\KGL) \rTo \s_{i}(\KQ)$ is multiplication by $2$ composed with the inclusion of $\Sigma^{4q,2q} \MZZ$ if $i=2q$, 
and trivial if $i$ is odd.
\end{proposition}

\subsection{The mysterious summand is trivial}
\label{section:mysterious-summand}
To prove $\Mmu$ is trivial we use the 
solution of the homotopy limit problem for hermitian $K$-theory of prime fields due to 
Berrick-Karoubi \cite{BK} and Friedlander \cite{Friedlander}.
Their results have been generalized by Hu-Kriz-Ormsby \cite{Hu-Kriz-Ormsby} and 
Berrick-Karoubi-Schlichting-{\O}stv{\ae}r \cite{BKSO},
where the following is shown.
Here $\vcd_{2}(F)=\cd_{2}(F(\sqrt{-1}))$ denotes the virtual mod-$2$ cohomological dimension of $F$.
\begin{theorem}
\label{thm:homotopy-limit-prime-fields}
If $\vcd_{2}(F)<\infty$ there is a canonical weak equivalence $\KQ/2\rTo\KGL^{hC_{2}}/2$.
\end{theorem}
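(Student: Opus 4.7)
The plan is to build the canonical comparison map and then deduce the equivalence by Galois descent from the separable closure, where the problem reduces to the topological input of Berrick-Karoubi \cite{BK} and Friedlander \cite{Friedlander}.

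First, I would construct the comparison. The stable Adams operation of Section~\ref{section:slicesofhomotopyorbitktheory} equips $\KGL$ with a naive $C_2$-action under which the forgetful map $f\colon \KQ \rTo \KGL$ is $C_2$-invariant---indeed, the inverse-transpose involution acts trivially on the isomorphism class of a self-dual module via the form itself. Consequently $f$ refines through the homotopy fixed points to give the canonical comparison $c\colon \KQ \rTo \KGL^{hC_{2}}$, and its mod-$2$ reduction is the map in question.

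Second, I would check that $c/2$ is a weak equivalence after base change along $F \rTo \overline{F}$ to a separable closure. This is the case of prime fields proved in \cite{BK} and \cite{Friedlander}, extended to any separably closed field of characteristic different from $2$ via the base change property of slices (Corollary~\ref{cor:slice-ess-smooth-base-change}) together with Suslin-style rigidity: mod-$2$ algebraic and hermitian $K$-theory of such a field compute the $2$-completions of topological complex and real $K$-theory, and the classical identification of real $K$-theory with the $C_2$-homotopy fixed points of complex $K$-theory under conjugation closes this step.

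Third, I would descend the equivalence from $\overline{F}$ to $F$. Writing $G = \Gal(\overline{F}/F)$, the hypothesis $\vcd_2(F) = \cd_2(F(\sqrt{-1}))<\infty$ ensures that the Galois hyperdescent spectral sequences for both $\KGL/2$ and $\KQ/2$ converge strongly. Mod-$2$ algebraic $K$-theory satisfies \'etale descent as a consequence of Bloch-Kato \cite{Voevodsky:Z/2}, and there is an analogous descent statement for $\KQ/2$ over base fields of finite vcd. Since $c/2$ is natural in the base and compatible with the $C_2$-action, it induces a morphism of descent spectral sequences, so the equivalence over $\overline{F}$ propagates to $F$. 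The main obstacle will be establishing \'etale/Galois descent for $\KQ/2$ at the required generality and verifying the compatibility of the two spectral sequences under $c$; once this is in place, the finite vcd hypothesis makes the filtrations complete and the equivalence on associated graded pieces forces the equivalence on abutments.
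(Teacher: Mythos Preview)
The paper does not actually prove this theorem. It is quoted as a result established elsewhere: the text immediately preceding the statement attributes the prime field cases to Berrick--Karoubi \cite{BK} and Friedlander \cite{Friedlander}, and the general statement to Hu--Kriz--Ormsby \cite{Hu-Kriz-Ormsby} and Berrick--Karoubi--Schlichting--{\O}stv{\ae}r \cite{BKSO}. So there is no in-paper proof to compare against; the theorem is imported as a black box and used only to kill the mysterious summand in Theorem~\ref{theorem:mysteriousvanishing}.

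Regarding your outline on its own terms: the architecture (construct $c$, verify over $\overline{F}$, descend) is sensible, but the third step hides the entire difficulty. You write that ``there is an analogous descent statement for $\KQ/2$ over base fields of finite vcd'' and then acknowledge that establishing this is ``the main obstacle.'' That is correct, and it is essentially circular: knowing that $\KQ/2$ satisfies Galois descent under a finite vcd hypothesis is tantamount to the hermitian Quillen--Lichtenbaum statement you are trying to prove. The references \cite{BKSO} and \cite{Hu-Kriz-Ormsby} do not proceed by first proving \'etale descent for $\KQ/2$ and then comparing; rather, \cite{BKSO} works through Bott-inverted hermitian $K$-theory and its comparison with topological hermitian $K$-theory, deducing the homotopy fixed point statement from the (already known) algebraic Quillen--Lichtenbaum conjecture for $\KGL$, while \cite{Hu-Kriz-Ormsby} uses genuine $C_2$-equivariant motivic homotopy theory and Real cobordism. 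In either approach the finite vcd hypothesis enters to control convergence of a Bott tower or a fixed-point spectral sequence, not to supply an independent descent theorem for $\KQ/2$.

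A smaller point: in step two, invoking Corollary~\ref{cor:slice-ess-smooth-base-change} is not what reduces the separably closed case to prime fields. That corollary is about base change for \emph{slices}, whereas what you need over $\overline{F}$ is a rigidity statement for the spectra $\KQ/2$ and $\KGL/2$ themselves (Suslin rigidity and its hermitian analogue), which is a different input.
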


To make use of Theorem \ref{thm:homotopy-limit-prime-fields} we need to understand how slices compare 
with mod-$2$ reductions of motivic spectra and formation of homotopy fixed points.
The former is straightforward because the slice functors are triangulated.
\begin{lemma}
\label{lemma:slices-mod2}
Let $\E$ be a motivic spectrum. 
There is a canonical isomorphism $\s_{q}(\E)/2\iso \s_{q}(\E/2)$.
\end{lemma}

\begin{lemma}\label{lem:hofix-mz-trivial}
There is a naturally induced commutative diagram:
\begin{diagram}
(\MZZ,\id)^{hC_{2}} & &\rTo^\iso &&  
\MZZ\vee\bigvee_{i<0} \Sigma^{2i,0}  \MZZ/2 \\
& \rdTo & & \ldTo_{\pr}& \\
&&\MZZ&&
\end{diagram}
\end{lemma}
\begin{proof}
With the trivial $C_{2}$-action on $\MZZ$ the homotopy fixed points spectrum is given by
\[ 
(\MZZ,\id)^{hC_{2}} 
= 
\sSet_\ast(\RR\PP^{\infty}_{+},\MZZ) 
\cong
\MZZ\vee \sSet_\ast(\RR\PP^{\infty},\MZZ). 
\]
The canonical map $(\MZZ,\id)^{hC_{2}} \rTo \MZZ$ corresponds to the map induced by $\Spec(F)_{+}\rInto\RR\PP^{\infty}_{+}$.
Induction on $n$ shows there are isomorphisms 
\[ 
\sSet_\ast(\RR\PP^n,\MZZ)
\cong
\begin{cases} 
\Sigma^{-n,0}  \MZZ \times \prod_{i=1}^{\frac{n-1}{2}} \Sigma^{-2i,0}  \MZZ/2 & n\equiv 1\bmod 2 \\
\prod_{i=1}^{\frac{n}{2}} \Sigma^{-2i,0}  \MZZ/2 & n\equiv 0\bmod 2.
\end{cases} 
\]
Writing $\sSet_\ast(\RR\PP^{\infty},\MZZ)=\colim_{n}\sSet_\ast(\RR\PP^n,\MZZ)$ we deduce
\[ 
\sSet_\ast(\RR\PP^{\infty},\MZZ) 
\cong
\prod_{i<0} \Sigma^{2i,0}  \MZZ/2. 
\]
Proposition \ref{prop:sum-product} identifies the latter with $\bigvee_{i<0} \Sigma^{2i,0}  \MZZ/2$. 
\end{proof}

\begin{lemma}\label{lem:hofix-mz-nontrivial}
The nontrivial involution on $\MZZ$ and the connecting map $\delta\colon \Sigma^{-1,0}  \MZZ/2\rTo \MZZ$ (on the zeroth summand below) give rise to the commutative diagram:
\begin{diagram}
(\MZZ,\sigma)^{hC_{2}} & & 
\rTo^\iso  & & 
\bigvee_{i\leq 0} \Sigma^{2i-1,0}  \MZZ/2\\
& \rdTo & & \ldTo_{\delta} & \\
& & \MZZ & & 
\end{diagram}
\end{lemma}
\begin{proof}
Let $(G,\sigma)$ be an involutive simplicial abelian group with $\sigma(g)=-g$ and $EC_{2}$ a contractible simplicial set 
with a free $C_{2}$-action.
Then $C_{2}$ acts on the space of fixed points $\sSet(EC_{2},G)^{C_{2}}$ by conjugation,
i.e., 
on the simplicial set of maps from $EC_{2}$ to $(G,\sigma)$. 
Now choose $EC_{2}$ such that its skeletal filtration
$
(S^0,g) 
\subseteq 
(S^1,g) 
\subseteq 
\dotsm 
\subseteq 
(S^n,g) 
\subseteq 
\dotsm 
$
comprises spheres equipped with the antipodal  $C_{2}$-action. 
Here $(S^{n+1},g)$ arises from $(S^n,g)$ by attaching a free $C_{2}$-cell of dimension $n+1$ along the $C_{2}$-map
$C_{2}\times S^n \rTo (S^n,g)$ adjoint to the identity on $S^n$. 
Thus the inclusion $(S^n,g) \subseteq (S^{n+1},g)$ yields a pullback square of simplicial sets:
\begin{equation}
\label{eq:3}
\begin{diagram}
\sSet\bigl((S^{n+1},g),(G,\sigma)\bigr)^{C_{2}} & \rTo & \sSet\bigl((S^{n},g),(G,\sigma)\bigr)^{C_{2}} \\
\dTo & & \dTo \\
\sSet(D^{n+1},G) & \rTo & \sSet(S^{n},G) 
\end{diagram}
\end{equation}
By specializing to $n=0$, 
(\ref{eq:3}) extends to the commutative diagram:
\begin{diagram}
\sSet\bigl((S^{1},g),(G,\sigma)\bigr)^{C_{2}} & \rTo & 
\sSet\bigl((S^{0},g),(G,\sigma)\bigr)^{C_{2}} \iso G & \rTo & 0 \\
\dTo & & \dTo^{x\mapsto (x,-x)} & & \dTo \\
\sSet(D^{1},G) & \rTo^{\phi} & \sSet(S^{0},G)\iso G\times G & \rTo^{+} & G 
\end{diagram}
$\phi$ is a fibrant replacement of the diagonal $G\rTo G\times G$. 
The right hand square is a pullback, 
and addition is a Kan fibration.
Hence $\sSet\bigl((S^{1},g),(G,\sigma)\bigr)^{C_{2}}$ is the homotopy fiber of multiplication by 2 on $G$. 
On the other hand, 
there is a homotopy cofiber sequence $S^1 \rTo^2 S^1\iso \RR\PP^1\rTo \RR\PP^2$ and $\sSet\bigl((S^{1},g),(G,\sigma)\bigr)^{C_{2}}$ 
has a canonical basepoint.
Thus there is a homotopy equivalence
\[ 
\Omega\sSet\bigl((S^{1},g),(G,\sigma)\bigr)^{C_{2}} 
\simeq 
\sSet_\ast(\RR\PP^2,G).
\]
By induction on (\ref{eq:3}), 
we find for every $n\geq 0$ a homotopy equivalence 
\[ 
\Omega \sSet\bigl((S^{n},g),(G,\sigma)\bigr)^{C_{2}} 
\simeq 
\sSet_\ast(\RR\PP^{n+1},G), 
\]
which implies
\begin{equation}
\label{eq:4}
\Omega(G,\sigma)^{hC_{2}}  
\simeq 
\sSet_\ast(\RR\PP^\infty,G).
\end{equation}
A levelwise and sectionwise application of~(\ref{eq:4}) yields the weak equivalences
\begin{equation*}
\Omega^{1,0}(\MZZ,\sigma)^{hC_{2}}  
\cong 
\sSet_\ast(\RR\PP^\infty,\MZZ)
\cong
\prod_{i<0} S^{2i,0}  \MZZ/2. 
\end{equation*}
(For the second weak equivalence see the proof of Lemma~\ref{lem:mz-triv-inv}.) 
The canonical map from $(\MZZ,\sigma)^{hC_{2}}$ to $\MZZ$ corresponds to the map induced by $\RR\PP^{1}\rInto \RR\PP^{\infty}$.
Proposition~\ref{prop:sum-product} concludes the proof.
\end{proof}

\begin{proposition}
\label{proposition:slices-hofix}
Suppose that $\vcd_{2}(F)<\infty$.
The canonical map\[ 
\s_{q}(\KGL^{hC_{2}}) 
\rTo
\s_{q}(\KGL)^{hC_{2}} 
\]
is an isomorphism for every integer $q$.
\end{proposition}
\begin{proof}
Let $F$ be an arbitrary field of characteristic not 2.
Lemma~\ref{lem:map-myst} shows that $\KGL^{hC_2}$ satisfies the properties which were used to determine the slices of $\KQ$. 
As in the proof of Theorem \ref{thm:slices-ko},
this results in a splitting
\[  
\s_{q}(\KGL^{hC_2})
=
\begin{cases}
\bigl(\Sigma^{2q,q}  \MZZ\bigr) 
\vee 
\bigl(\Sigma^{q,q}  \Mnu\bigr) 
\vee
\bigvee_{i<\frac{q}{2}} \Sigma^{2i+q,q}  \MZZ/2 & q\equiv 0 \bmod 2, \\
\bigl(\Sigma^{q,q}  \Mnu\bigr) 
\vee 
\bigvee_{i<\frac{q+1}{2}} \Sigma^{2i+q,q}  \MZZ/2 & q\equiv 1 \bmod 2.
\end{cases}
\]
Here $\Mnu\cong \Sigma^{4,0} \Mnu$ and $\Mnu_{s,t}$ is an $\FF_{2}$-module for all integers $s$ and $t$. 
Hence the vanishing of $\Mnu$ can be deduced from a computation of the zero slice of $\KGL^{hC_{2}}/2=\bigl(\KGL/2\bigr)^{hC_{2}}$.
Moreover, 
the homotopy norm cofiber sequence
\[ \KGL_{hC_{2}}\rTo \KGL^{hC_{2}} \rTo \widehat{\KGL}^{C_{2}}
\rTo \Sigma^{1,0}\KGL_{hC_{2}} \]
from \cite[Diagram (20)]{Hu-Kriz-Ormsby} and Theorem~\ref{thm:slices-homotopy-orbit-K} imply that $\Mnu$ is also a direct summand 
of $\s_0\bigl(\widehat{\KGL}^{C_{2}}\bigr)$.

Suppose that $\vcd_{2}(F)<\infty$. 
Theorem~\ref{thm:homotopy-limit-prime-fields} shows the canonical maps $\KQ/2\rTo \KGL^{hC_{2}}/2$ and $\KT/2\rTo \widehat{\KGL/2}^{C_{2}}$ are equivalences.
Thus the Tate motivic spectrum $\widehat{\KGL/2}^{C_{2}}$ is cellular in the sense of \cite{dugger-isaksen.cell}, since $\KQ$ is cellular \cite{rso.kqcell}.
(The latter is deduced using the model for $\KQ$ from \cite{paninwalterBO}.) 
Hence the zero slice of $\widehat{\KGL/2}^{C_{2}}$ is the motivic Eilenberg-MacLane spectrum associated to a chain complex of abelian groups. This chain complex
does not depend on the base field.
Thus it suffices to compute the zero slice of $\widehat{\KGL/2}^{C_{2}}$ (or, equivalently, of $\KT/2$) over an algebraically closed field of characteristic not two. 
For this one can employ the specific cell presentation given in \cite[Theorem 3.2]{hornbostel.nil} (see also \cite[Section 4]{roendigs.eta}),
which extends to any algebraically closed field of characteristic
not two via the base change techniques used in the proof of
Lemma~\ref{lemma:KGL2differential}. It shows that $\KT$ may be obtained
in two steps from the $\eta$-inverted sphere $\One[\tfrac{1}{\eta}]$.
The first step produces a canonical map $\D_\infty \rTo \KT$
from the colimit $\D_\infty$ of the sequential diagram
\[ \One[\tfrac{1}{\eta}] =\D_1 \rTo \D_2  \rTo \dotsm \rTo \D_n \rTo \dotsm \]
where $\D_{n}\rTo \D_{n+1}$ is the canonical map to the
homotopy cofiber of the unique nontrivial
map $\Sigma^{4n-1,0}\One[\tfrac{1}{\eta}] \rTo \D_n$. 
The second step provides that $\D_\infty \rTo \KT$
is obtained by inverting the unique nontrivial element
$\kappa\in\pi_{4,0}\D_\infty$. 
The computation of the slices of the sphere spectrum \cite[\S8]{Levine:comparison}  
leads to the computation
\[ 
\s_{0}(\One[\tfrac{1}{\eta}])
\cong
\MZZ/2
\vee
\bigvee_{n\geq 2} \Sigma^{n,0}\MZZ/2
\]
stated as \cite[Theorem 4.12]{roendigs.eta}.
The sequential diagram above implies that
\[ 
\s_{0}(\D_\infty)
\cong
\bigvee_{n\geq 0} \Sigma^{2n,0}\MZZ/2
\]
whence, using $\KT\cong \D_\infty[\tfrac{1}{\kappa}]$, the zero slice
of $\KT$ is
\[ 
\s_{0}(\KT) \cong \s_{0}\bigl(\D_\infty[\tfrac{1}{\kappa}]\bigr)
\cong \s_{0}(\D_\infty)[\tfrac{1}{\s_0\kappa}]
\cong
\bigvee_{n\in \ZZ} \Sigma^{2n,0}\MZZ/2.
\]
It follows that $\Mnu$ is contractible, which proves the statement.
\end{proof}

\begin{theorem}
\label{thm:slices-kgl-hofix}
Suppose that $\vcd_{2}(F)<\infty$.
The slices of homotopy fixed point algebraic $K$-theory $\KGL^{hC_{2}}$ are given by
\[  
\s_{q}(\KGL^{hC_{2}})
=
\begin{cases}
\Sigma^{q,q}  \Bigl(\bigl(\Sigma^{q,0}  \MZZ\bigr) \vee \bigvee_{i<\frac{q}{2}} \Sigma^{2i,0}  \MZZ/2\Bigr) & q\equiv 0 \bmod 2 \\
\Sigma^{q,q}  \bigvee_{i<\frac{q+1}{2}} \Sigma^{2i,0}  \MZZ/2 & q\equiv 1 \bmod 2.
\end{cases}
\]
\end{theorem}
\begin{proof}
This follows from Theorem~\ref{thm:slices-k},
Lemmas \ref{lem:hofix-mz-trivial}, \ref{lem:hofix-mz-nontrivial},
and Proposition \ref{proposition:slices-hofix}.
\end{proof}

\begin{lemma}
\label{lem:map-myst}
The map $\KQ \rTo \KGL^{hC_{2}}$ induces the projection map on all slices.
\end{lemma}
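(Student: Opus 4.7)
The plan is to analyze the induced map $\s_q(\KQ)\rTo\s_q(\KGL^{hC_2})$ summand by summand, using compatibility with $\s_q(f)$ and the strong constraints imposed by $\MZZ$-linearity on maps between suspensions of $\MZZ$ and $\MZZ/2$.

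First I will use Proposition~\ref{proposition:slices-hofix} to identify $\s_q(\KGL^{hC_2})\iso\s_q(\KGL)^{hC_2}$. Proposition~\ref{prop:involution-slices} gives the induced involution on $\s_q(\KGL)\iso S^{2q,q}\smash\MZZ$ (trivial for $q$ even, nontrivial for $q$ odd), and Lemmas~\ref{lem:hofix-mz-trivial} and~\ref{lem:hofix-mz-nontrivial} provide the explicit splittings used in Theorem~\ref{thm:slices-kgl-hofix} together with the explicit form of the canonical map $\s_q(\KGL^{hC_2})\rTo\s_q(\KGL)$. A direct comparison with Theorem~\ref{thm:slices-ko} shows that every summand of $\s_q(\KGL^{hC_2})$ is matched in bidegree by a summand of $\s_q(\KQ)$, the only unmatched summand of the source being $S^{q,q}\smash\Mmu$. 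By construction the composite $\s_q(\KQ)\rTo\s_q(\KGL^{hC_2})\rTo\s_q(\KGL)$ equals $\s_q(f)$, which is explicit from Proposition~\ref{prop:slices-forget-hyper}.

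Next I would exploit $\MZZ$-linearity of the map on slices. Because the weight-zero part of the motivic Steenrod algebra is concentrated in bidegree $(0,0)$ where it equals $\FF_2$, any $\MZZ$-linear map $\Sigma^{a,q}\MZZ/2\rTo\Sigma^{c,q}\MZZ/2$ vanishes unless $a=c$, and in that case is either zero or an isomorphism. Combined with Lemma~\ref{lemma:MZtoMZ/2weightzero} for the $\MZZ$-to-$\MZZ/2$ components, this forces the map to be block-diagonal on the non-mysterious part with respect to the bidegree matching. On the $\MZZ$-summand (for $q$ even) and on the top $\MZZ/2$-summand (for $q$ odd) the diagonal entry is pinned down as an isomorphism by compatibility with $\s_q(f)$; for the remaining $\MZZ/2$-to-$\MZZ/2$ entries, which all lie in the kernel of the projection to $\s_q(\KGL)$, I would apply Karoubi periodicity $\KQ\iso S^{8,4}\smash\KQ$ to propagate the identification from the low slices treated in \S\ref{section:slic-herm-algebr}.

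The main obstacle is to rule out nonzero contributions $S^{q,q}\smash\Mmu\rTo\Sigma^{k,q}\MZZ/2$ in the matrix. Such a map corresponds to a weight-zero cohomology class on $\Mmu$; the $(4,0)$-periodicity $\Mmu\iso S^{4,0}\smash\Mmu$ from Theorem~\ref{thm:slices-ko} forces the relevant Hom-groups to be $4$-periodic in the first coordinate. To extract the desired vanishing I plan to bring in the Kobal cofiber sequence $\KGL_{hC_2}\rTo\KQ\rTo\KT$ of Theorem~\ref{theorem:kobalsequence}: by Theorem~\ref{thm:slices-homotopy-orbit-K} the $\MZZ/2$-summands of $\s_q(\KGL_{hC_2})$ live in odd first-coordinate suspensions whereas those of $\s_q(\KGL^{hC_2})$ live in even ones, so the composite $\s_q(\KGL_{hC_2})\rTo\s_q(\KQ)\rTo\s_q(\KGL^{hC_2})$ can only hit the $\MZZ$-summand. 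Chasing the resulting commutative diagram confines the $\Mmu$-contribution to a subgroup that a degree and periodicity inspection shows to be zero, completing the identification of $\s_q(\KQ)\rTo\s_q(\KGL^{hC_2})$ with the projection.
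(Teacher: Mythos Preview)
Your opening moves are sound: block-diagonality from weight-zero $\MZZ$-linearity is correct, and compatibility with $\s_q(f)$ does pin down the top summand in each slice. However, two genuine gaps remain.

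First, the ``Karoubi periodicity'' step for the lower $\MZZ/2$-summands is not filled in. Periodicity $\KQ\iso\Sigma^{8,4}\KQ$ identifies $\s_q(\KQ)$ with a shift of $\s_{q-4}(\KQ)$, but to propagate the \emph{map} $\s_q(\theta)$ you would need $\theta$ to intertwine the two periodicities, and even granting that, $(8,4)$-periodicity only moves you along the diagonal of the slice tower --- it does not relate the top summand of one slice to the lower summands of the same slice. What is actually needed is compatibility of $\theta$ with $\eta$: the cofiber sequence $\Sigma^{1,1}\KQ\xrightarrow{\eta}\KQ\xrightarrow{f}\KGL$ and its analogue for $\KGL^{hC_2}$ let you determine $\s_q(\theta)$ inductively from $\s_{q-1}(\theta)$, exactly as in \S\ref{section:slic-herm-algebr}.

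Second, and more seriously, the argument for the $\Mmu$-component does not work as sketched. The parity observation about the composite $\s_q(\KGL_{hC_2})\to\s_q(\KQ)\to\s_q(\KGL^{hC_2})$ constrains the image of $\s_q(\KGL_{hC_2})$, not the image of the summand $\Mmu\subset\s_q(\KQ)$, which is what you need. Passing to the cofiber in the Kobal sequence would bring in $\s_q(\KT)$, but its identification (Theorem~\ref{thm:slices-witt-theory}) already uses $\Mmu=0$, so this is circular. A ``degree and periodicity inspection'' cannot show maps out of $\Mmu$ vanish, since at this point nothing is known about $\Mmu$ beyond its $(4,0)$-periodicity and $\FF_2$-module structure.

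The paper circumvents both problems by a structural argument: it shows that the homotopy cofiber of $\eta$ on $\KGL^{hC_2}$ is again $\KGL$ (this is the real work, using convergence of the slice spectral sequence and comparison with an algebraically closed field), so that $\KGL^{hC_2}$ sits in a Hopf cofiber sequence parallel to that of $\KQ$, with $\theta$ a map of cofiber sequences. Rerunning \S\ref{section:slic-herm-algebr} verbatim for $\KGL^{hC_2}$ then yields $\s_q(\KGL^{hC_2})$ with its own ``mysterious summand'' $\Mnu$, and the map-of-cofiber-sequences property forces $\s_q(\theta)=(\id,\zeta)$ with $\zeta\colon\Mmu\to\Mnu$. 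Finally $\Mnu=0$ follows by comparing with the direct computation of Theorem~\ref{thm:slices-kgl-hofix}. The point is that one never analyzes maps out of $\Mmu$ directly; one instead shows its image lands in a summand already known to vanish.
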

\begin{proof}
This is clear from the following claims.
\begin{enumerate}
\item\label{item:1} 
The map $\theta\colon \KQ \rTo \KGL^{hC_{2}}$ fits into a commutative diagram of homotopy cofiber sequences:
\begin{diagram}
\Sigma^{1,1}  \KQ & \rTo^\eta & \KQ & \rTo^f & \KGL & \rTo^{\Sigma^{2,1}  h\circ \beta} & \Sigma^{2,1}  \KQ \\
\dTo^{\Sigma^{2,1}  \theta} & & \dTo^\theta & & \dTo_\id & & \dTo_{\Sigma^{2,1}  \theta} \\
\Sigma^{1,1}  \KGL^{hC_{2}} & \rTo^\eta & \KGL^{hC_{2}} & \rTo^{f^\prime} & \KGL & \rTo^{\Sigma^{2,1}  h^\prime\circ \beta} & \Sigma^{2,1}  \KGL^{hC_{2}} 
\end{diagram}
\item\label{item:2}
The map $s_0\theta\colon s_0\KQ \rTo s_0\KGL^{hC_2}$ is the identity on the summand $\MZZ$.
\item\label{item:3}
The map $h^\prime \circ f^\prime \colon \KGL^{hC_2} \rTo \KGL^{hC_2}$ is multiplication with $1-\epsilon$. 
\item\label{item:4} 
The map $f^\prime \circ h^\prime \colon \KGL\rTo \KGL$ coincides with $1+\Psi^{-1}_\mathrm{st}$.
\end{enumerate}
To prove the first claim, 
consider the homotopy fiber of the canonical map
$\theta \colon \KQ \rTo \KGL^{hC_{2}}$. The Tate diagram
\cite[Diagram (20)]{Hu-Kriz-Ormsby} shows that it coincides
with the homotopy fiber of the canonical map
$\KT \rTo \widehat{\KGL}^{C_{2}}$. Since $\eta$ acts
invertibly on these two motivic spectra, it acts 
invertibly on the homotopy fiber of $\theta$.
This implies that the diagram
\begin{diagram}
\Sigma^{1,1}  \KQ & \rTo^\eta & \KQ  \\
\dTo^{\Sigma^{2,1}  \theta} & & \dTo^\theta \\
\Sigma^{1,1}  \KGL^{hC_{2}} & \rTo^\eta & \KGL^{hC_{2}} 
\end{diagram}
is a homotopy pullback square, and hence the first claim.
See also \cite[Remark 5.9]{Isaksen-Shkembi}.
The second claim follows from the factorization of the unit map for algebraic $K$-theory
\[ 
\One 
\rTo^{\mathrm{unit}} 
\KQ 
\rTo^\theta  
\KGL^{hC_2}  
\rTo^{f^\prime} 
\KGL.
\]

By the proof of Lemma~\ref{lemma:multiplicationbyfh} the third claim follows from the commutative diagram:
\begin{diagram}
\KQ & \rTo^{f} & \KGL & \rTo^h & \KQ \\
\dTo^\theta & & \dTo^\id & & \dTo_\theta\\
\KGL^{hC_2} & \rTo^{f^\prime} & \KGL & \rTo^{h^\prime} & \KGL^{hC_2}
\end{diagram}

The previous commutative diagram and~(\ref{eq:fh}) imply the fourth and final claim.
Using the above one may now compute the slices of $\KGL^{hC_2}$ in the same way as for $\KQ$, 
cf.~Section~\ref{section:slic-herm-algebr}.
That is, 
one obtains an identification 
\[  
\s_{q}(\KGL^{hC_2})
=
\begin{cases}
\bigl(\Sigma^{2q,q}  \MZZ\bigr) 
\vee 
\bigl(\Sigma^{q,q}  \Mnu\bigr) 
\vee
\bigvee_{i<\frac{q}{2}} \Sigma^{2i+q,q}  \MZZ/2 & q\equiv 0 \bmod 2 \\
\bigl(\Sigma^{q,q}  \Mnu\bigr) 
\vee 
\bigvee_{i<\frac{q+1}{2}} \Sigma^{2i+q,q}  \MZZ/2 & q\equiv 1 \bmod 2.
\end{cases}
\]
Here $\Mnu\cong \Sigma^{4,0} \Mnu$ and $\Mnu_{s,t}$ is an $\FF_{2}$-module for 
all integers $s$ and $t$.
Moreover, 
it follows from our first claim above that $\theta\colon \KQ\rTo \KGL^{hC_2}$ 
splits as $(\id,\zeta)$, 
where $\id$ is the identity on the non-mysterious summands of the respective slices, 
and $\zeta\colon \Mmu\rTo \Mnu$ up to suspension with $S^{1,1}$. 
By comparison with the identification of the slices of 
$\KGL^{hC_2}$ in Theorem~\ref{thm:slices-kgl-hofix}, 
the summand $\Mnu$ is contractible, 
which completes the proof.
\end{proof}

\begin{theorem}
\label{theorem:mysteriousvanishing}
The mysterious summand $\Mmu$ is trivial.
\end{theorem}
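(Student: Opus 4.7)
The plan is to exploit the $2$-local equivalence $\KQ/2 \iso \KGL^{hC_2}/2$ supplied by Theorem~\ref{thm:homotopy-limit-prime-fields}, which will force $\Mmu$ to vanish after reduction mod $2$, and then to leverage the fact that the homotopy groups of $\Mmu$ are $\FF_2$-linear to conclude $\Mmu \iso 0$ integrally. The strategy proceeds in three steps: establish the result over a field $k$ of finite virtual mod-$2$ cohomological dimension, bootstrap from $\Mmu/2 \iso 0$ to $\Mmu \iso 0$, and finally descend from an arbitrary perfect field $F$ to its prime subfield via essentially smooth base change.

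First, I would work over a field $k$ with $\vcd_2(k) < \infty$, such as any prime field of characteristic $\neq 2$. By Theorem~\ref{thm:homotopy-limit-prime-fields}, the map $\theta\colon \KQ \rTo \KGL^{hC_2}$ from Lemma~\ref{lem:map-myst} induces a weak equivalence $\theta/2\colon \KQ/2 \rTo \KGL^{hC_2}/2$. Applying $\s_q$ and invoking Lemma~\ref{lemma:slices-mod2}, the induced map $\s_q(\theta)/2$ is an isomorphism for every $q\in\ZZ$. By Lemma~\ref{lem:map-myst}, $\s_q(\theta)$ is the projection onto the non-mysterious summands, since $\Mnu$ is contractible by the proof of that lemma. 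Smashing with $\MZZ/2$ preserves this description, so the cofiber of $\s_q(\theta)/2$ is precisely $S^{q,q}\smash(\Mmu/2)$; this cofiber must vanish, hence $\Mmu/2 \iso 0$.

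Next, I would promote $\Mmu/2 \iso 0$ to $\Mmu \iso 0$. By Theorem~\ref{thm:slices-ko}, the bigraded homotopy groups $\Mmu_{s,t}$ are $\FF_2$-modules, so multiplication by $2$ on $\Mmu$ acts as zero on every homotopy group. The long exact sequence attached to the cofiber triangle $\Mmu \rTo^{2} \Mmu \rTo \Mmu/2$ collapses to short exact sequences $0 \rTo \Mmu_{s,t} \rTo (\Mmu/2)_{s,t} \rTo \Mmu_{s-1,t} \rTo 0$, forcing $\Mmu_{s,t} = 0$ for all $s,t$, and hence $\Mmu \iso 0$ in $\SHH(k)$.

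Finally, for a general perfect field $F$ of characteristic $\neq 2$, take the prime subfield $k \subseteq F$. The inclusion $\alpha\colon k \rTo F$ is a separable field extension, so Corollary~\ref{cor:slice-ess-smooth-base-change} yields natural isomorphisms $\alpha^* \s_q^k(\KQ) \iso \s_q^F(\KQ)$; since the splitting in Theorem~\ref{thm:slices-ko} is characterized by retracts built from the unit map, the forgetful map to $\KGL$ and Karoubi periodicity, all of which are preserved by $\alpha^*$, one obtains a compatible isomorphism $\alpha^* \Mmu^k \iso \Mmu^F$. The previous two steps give $\Mmu^k = 0$, whence $\Mmu^F = 0$. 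The main obstacle will be verifying this naturality of the mysterious summand under mod-$2$ reduction and base change, in particular that $\s_q(\theta)/2$ really is the projection onto the complement of $S^{q,q}\smash(\Mmu/2)$; the bootstrap from $\Mmu/2 = 0$ to $\Mmu = 0$ is formally straightforward once the $\FF_2$-module structure on homotopy is in hand.
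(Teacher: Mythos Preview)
Your proposal is correct and follows essentially the same route as the paper: reduce to prime fields via Corollary~\ref{cor:slice-ess-smooth-base-change}, use the homotopy limit theorem (Theorem~\ref{thm:homotopy-limit-prime-fields}) together with Lemma~\ref{lem:map-myst} to identify the fiber of $\s_q(\theta)$ with (a suspension of) $\Mmu$, and conclude it vanishes. Your write-up is in fact more careful than the paper's terse ``homotopy fiber of a weak equivalence'': you make explicit that Theorem~\ref{thm:homotopy-limit-prime-fields} only gives an equivalence after $/2$, and you correctly supply the missing bootstrap from $\Mmu/2\iso 0$ to $\Mmu\iso 0$ via the long exact sequence and the $\FF_2$-module structure on $\pi_{\ast,\ast}\Mmu$ from Theorem~\ref{thm:slices-ko}. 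One small wording point: the reduction $\E/2$ is $\E\smash S/2$, not $\E\smash\MZZ/2$; for $\MZZ$-modules these agree after passing to the $\MZZ$-module smash product, so your argument is unaffected, but the phrasing ``smashing with $\MZZ/2$'' is slightly imprecise.
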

\begin{proof}
By Corollary~\ref{cor:slice-ess-smooth-base-change}
it suffices to consider prime fields.
Theorems \ref{thm:homotopy-limit-prime-fields}, \ref{thm:slices-kgl-hofix} and Lemma \ref{lem:map-myst} 
show that the mysterious summand is the homotopy fiber of a weak equivalence. 
\end{proof}

\subsection{Higher Witt-theory}
\label{section:slicesWitttheory}
By combining Theorems \ref{theorem:kobalsequence}, 
\ref{thm:slices-homotopy-orbit-K}, 
\ref{thm:slices-ko}, 
and \ref{theorem:mysteriousvanishing} we have enough information to identify the slices of Witt-theory.
An alternate proof with no mention of homotopy orbit $K$-theory follows by Lemma \ref{lemma:sliceshomotopycolimits} 
and the identification of $\s_q(\eta)\colon \s_q(\Sigma^{1,1} \KQ)\rTo \s_q(\KQ)$ worked out in \S \ref{section:slic-herm-algebr}.
\begin{theorem}
\label{thm:slices-witt-theory}
The slices of Witt-theory are given by
\[ 
\s_{q}(\KT) 
\iso 
\Sigma^{q,q}  \bigvee_{i\in \ZZ} \Sigma^{2i,0}  \MZZ/2. 
\]
\end{theorem}

Let $u\colon\KQ\rTo\KT$ be the canonical map.
The next result follows now by an easy inspection.
\begin{proposition}
\label{prop:slices-unit-kt}
Restricting the map $\s_{2q}(u)\colon \s_{2q}(\KQ)\rTo\s_{2q}(\KT)$ to the summand $\Sigma^{4q,2q}  \MZZ$ yields the projection 
$\Sigma^{4q,2q}  \MZZ \rTo \Sigma^{4q,2q}  \MZZ/2$ composed with the inclusion into $\s_{2q}(\KT)$.
Restricting the same map to a suspension of $\MZZ/2$ yields the inclusion into $\s_{2q}(\KT)$.
On odd slices, 
$\s_{2q+1}(u)\colon \s_{2q+1}(\KQ) \rTo \s_{2q+1}(\KT)$ is the inclusion.
\end{proposition}

\section{Differentials}
\label{section:differentialsI}
Having determined the slices we now turn to the problem of computing the 
first differentials in the slice spectral sequences for 
$\KGL_{hC_{2}}$, $\KQ$ and $\KT$.

\subsection{Mod-2 algebraic $K$-theory}
\label{subsection:diff-algebraic-k-theory}
Theorem \ref{thm:slices-k} and Lemma \ref{lemma:slices-mod2} show there is an isomorphism 
\[
\s_{q}(\KGL/2) 
\iso 
\Sigma^{2q,q}  \MZZ/2.
\] 
Hence the differential 
\[
d^{\KGL/2}_{1}
\colon 
\s_{q}(\KGL/2) 
\rTo 
\Sigma^{1,0}   \s_{q+1}(\KGL/2)
\] 
corresponds to a bidegree $(3,1)$ element in the mod-2 motivic Steenrod algebra.
By Bott periodicity it is independent of $q$.
Next we resolve the mod-2 version of a question stated in \cite[Remark 3.12]{Voevodsky:seattle}.
\begin{lemma}
\label{lemma:KGL2differential}
If $\Char(F)\neq 2$ the differential 
\[ 
d^{\KGL/2}_{1} 
\colon 
\MZZ/2\rTo \Sigma^{3,1}  \MZZ/2
\]
equals the Milnor operation $\Qop_{1}=\Sq^{3}+\Sq^{2}\Sq^{1}$.
\end{lemma}
\begin{proof}
By Lemma \ref{lemma:mod2tomod2weight1} and the Adem relation $\Sq^{3}=\Sq^{1}\Sq^{2}$ there exist $a,b\in \ZZ/2$ such that 
\begin{equation}
\label{equation:d1KGL/2}
d^{\KGL/2}_{1} 
= 
a\Sq^{3} +b\Sq^{2}\Sq^{1}.
\end{equation}
Using the Adem relations $\Sq^{3}\Sq^{3}=\Sq^5\Sq^{1}$, 
$\Sq^{2}\Sq^{3}=\Sq^5+\Sq^{4}\Sq^{1}$, 
and $\Sq^{1}\Sq^{1}=0$ we find
\begin{align*}
(a\Sq^{3}+b\Sq^{2}\Sq^{1})^{2}  
= 
&
a^{2}\Sq^{3}\Sq^{3}
+ab\Sq^{3}\Sq^{2}\Sq^{1} 
+ab\Sq^{2}\Sq^{1}\Sq^{3} 
+b^{2} \Sq^{2}\Sq^{1}\Sq^{2}\Sq^{1} \\
= 
& 
(a^{2}+ b^{2})\Sq^5\Sq^{1}.
\end{align*}
Since $d^{\KGL/2}_{1}$ squares to zero this implies $a=b$.
Recall the classes $0\neq\tau\in h^{0,1}$ and $\rho\in h^{1,1}$ represented by $-1\in F$.
Over $\RR$, 
and hence $\QQ$, 
$d^{\KGL/2}_{1}(\tau^{2})=\rho^{3}$ by Suslin's computation of the algebraic $K$-theory of the real numbers \cite{Suslinlocal}, 
while 
\[
d^{\KGL/2}_{1}(\tau^{2})
=
a\Sq^{3}(\tau^{2})+b\Sq^{2}\Sq^{1}(\tau^{2})
=
a\rho^{3}
\]
by (\ref{equation:d1KGL/2}) and Corollary \ref{cor:sq-weight-1}.
These computations imply that $a=b=1$ by base change for all fields of characteristic zero.
To extend this result to fields of odd characteristic, 
we consider
the following diagram of motivic spectra over $\Spec(\ZZ[\tfrac{1}{2}])$.
\begin{equation}
  \label{eq:dedekind}
  \begin{diagram}
    \KGL/2 & \lTo & \MGL/(2,x_2,x_3,\dotsc)\MGL &  \rTo & \MGL/(2,x_1,x_2,\dotsc)\MGL & \rTo & \MZZ/2
  \end{diagram}
\end{equation}
Here $\MGL$ denotes Voevodsky's algebraic cobordism spectrum, $x_n$
denotes the canonical image of Lazard's generator in $\MGL_{\ast,\ast}$, 
$\MZZ/2$ denotes Spitzweck's motivic cohomology spectrum 
with $\mathbb{F}_2$-coefficients \cite{SpitzweckMZ}, 
and the maps are induced by the respective canonical orientations.
All maps in diagram~(\ref{eq:dedekind}) induce
equivalences on zero slices. For the map pointing to the left, this
follows from the description of $\KGL$ as 
$\bigl(\MGL/(x_2,x_3,\dotsc)\MGL\bigr)[x_1^{-1}]$ 
\cite[Theorem 5.2]{Spitzweck:slices}. For the map in the middle, 
this follows from its construction.
By \cite[Theorem 11.3]{SpitzweckMZ}, the rightmost map in diagram~(\ref{eq:dedekind}) is an equivalence, and in particular after applying the
zero slice functor. There results a commutative 
diagram
\begin{equation}\label{eq:dedekind-slice}
\begin{diagram}[width=2cm]
\s_0\KGL/2 & \rTo^{\sliced_1} & \Sigma^{3,1}  \s_0\KGL/2 \\
\dTo^\iso & & \dTo_\iso \\
\s_0 \MZZ/2& \rTo & \Sigma^{3,1} \s_0\MZZ/2
\end{diagram}
\end{equation}
where the map on the top is defined as over a field.
By its construction, the motivic spectrum $\MZZ/2$ satisfies
$\f_{1}\MZZ/2 = \ast$, and the aforementioned  \cite[Theorem 11.3]{SpitzweckMZ}
implies that $\MZZ/2$ is effective over $\Spec(\ZZ[\tfrac{1}{2}])$.
There results a canonical isomorphism 
$\s_0 \MZZ/2 \iso \MZZ/2$ which is a map of motivic ring spectra
by \cite[Theorem 5.19]{GRSO}. 
Base change via $f\colon \Spec(\QQ) \rTo \Spec(\ZZ[\tfrac{1}{2}])$ 
maps diagram~(\ref{eq:dedekind-slice}) 
to the following
diagram over $\Spec(\QQ)$, by Lemma~\ref{lemma:slice-ess-smooth-base-change},
\cite[Lemma 7.5]{SpitzweckMZ} and the previous argument.
\begin{diagram}[width=2cm]
\s_0\KGL/2 & \rTo^{\sliced_1} & \Sigma^{3,1}  \s_0\KGL/2 \\
\dTo^\iso & & \dTo_\iso \\
\MZZ/2& \rTo^{\Qop_{1}} & \Sigma^{3,1} \MZZ/2 \\
\end{diagram}
If $p$ is an odd prime, base change via $i\colon \Spec(\FF_p)\rClosed
\Spec(\ZZ[\tfrac{1}{2}])$ maps 
diagram~(\ref{eq:dedekind-slice}) to the following diagram.
  \begin{diagram}[width=2cm]
    i^\ast\s_0\KGL/2 & \rTo^{i^\ast \sliced_1} & \Sigma^{3,1}  i^\ast\s_0\KGL/2 \\
    \dTo^\iso & & \dTo_\iso \\
    i^\ast\MZZ/2& \rTo & \Sigma^{3,1}  i^\ast\MZZ/2 \\
  \end{diagram}
By \cite[Theorem 9.19]{SpitzweckMZ}, 
$i^\ast \MZZ/2$ is Voevodsky's Eilenberg-MacLane motivic ring spectrum
over $\FF_p$, and the bottom map in the last diagram
is 
the first Milnor operation by \cite[Theorem 11.24]{SpitzweckMZ}.
\end{proof}

Thus the $E^{1}$-page of the $0$th slice spectral sequence for $\KGL/2$ takes the form:
\begin{center}
  \pgfsetshortenend{2pt}
  \pgfsetshortenstart{2pt}
\begin{tikzpicture}[scale=1.0,font=\scriptsize,line width=1pt]
\draw[help lines] (0,0) grid (9.5,5.5);
\foreach \i in {0,...,5} {\node[label=left:$\i$] at (-.5,\i) {};}
\foreach \i in {0,...,9} {\node[label=below:$\i$] at (\i,-.2) {};}

{\draw[fill]     
(0,0) circle (1pt) node[above right=-1pt] {$h^{0,0}$}
(1,1) circle (1pt) node[above right=-1pt] {$h^{1,1}$}
(2,1) circle (1pt) node[above right=-1pt] {$h^{0,1}$}
(2,2) circle (1pt) node[above right=-1pt] {$h^{2,2}$}
(3,2) circle (1pt) node[above right=-1pt] {$h^{1,2}$}
(4,2) circle (1pt) node[above right=-1pt] {$h^{0,2}$}
(3,3) circle (1pt) node[above right=-1pt] {$h^{3,3}$}
(4,3) circle (1pt) node[above right=-1pt] {$h^{2,3}$}
(5,3) circle (1pt) node[above right=-1pt] {$h^{1,3}$}
(6,3) circle (1pt) node[above right=-1pt] {$h^{0,3}$}
(4,4) circle (1pt) node[above right=-1pt] {$h^{4,4}$}
(5,4) circle (1pt) node[above right=-1pt] {$h^{3,4}$}
(6,4) circle (1pt) node[above right=-1pt] {$h^{2,4}$}
(7,4) circle (1pt) node[above right=-1pt] {$h^{1,4}$}
(8,4) circle (1pt) node[above right=-1pt] {$h^{0,4}$}
(5,5) circle (1pt) node[above right=-1pt] {$h^{5,5}$}
(6,5) circle (1pt) node[above right=-1pt] {$h^{4,5}$}
(7,5) circle (1pt) node[above right=-1pt] {$h^{3,5}$}
(8,5) circle (1pt) node[above right=-1pt] {$h^{2,5}$}
(9,5) circle (1pt) node[above right=-1pt] {$h^{1,5}$}
;}

{
\draw[fill,->]
  (4,2) -- (3,3);
\draw[fill,->]
  (5,3) -> (4,4);
\draw[fill,->]
  (6,3) -> (5,4);
\draw[fill,->]
  (6,4) -> (5,5);
\draw[fill,->]
  (7,4) -> (6,5);
\draw[black!20!white,->]
  (8,4) -> (7,5);
}
\end{tikzpicture}
\end{center}

The differential $h^{0,4}\rTo h^{3,5}$ has a nontrivial group
as source and a potentially nontrivial group as target, but
is always zero by Corollary~\ref{cor:sq-weight-1}.

\begin{corollary}
If $\sqrt{-1}\in F$ then $d^{\KGL/2}_{1}\colon \pi_{p,n}\s_q(\KGL/2) \rTo \pi_{p-1,n}\s_{q+1}(\KGL/2)$ is trivial. 
\end{corollary}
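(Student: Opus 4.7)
The plan is to combine Lemma~\ref{lemma:KGL2differential}, which identifies $d^{\KGL/2}_{1}$ with the Milnor operation $\Qop_1 = \Sq^3 + \Sq^2\Sq^1$, with the fact that $\sqrt{-1}\in F$ forces $\rho=0$, and then read off triviality from Corollary~\ref{cor:sq-weight-1}. First I would observe that under the identification $h^{1,1}\iso F^\times/(F^\times)^2$, the hypothesis $\sqrt{-1}\in F$ makes $\rho=[-1]$ a square in $F^\times$, hence trivial in $h^{1,1}$.

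Next, I would invoke the Adem relation $\Sq^3=\Sq^1\Sq^2$ listed in Section~\ref{section:milnorconjecture} to rewrite
\[
d^{\KGL/2}_{1} = \Sq^1\Sq^2 + \Sq^2\Sq^1.
\]
By Lemma~\ref{lem:tau-iso}, every class on the $E^1$-page lies in some $h^{p,q}(F)$ with $0\leq p\leq q$ and can be written as $\tau^{q-p}c$ with $c\in h^{p,p}$. Corollary~\ref{cor:sq-weight-1} then expresses each of $\Sq^1(\tau^n c)$ and $\Sq^2(\tau^n c)$ as either $0$ or a $\rho$-multiple of $\tau^{n-1}c$. With $\rho=0$, both $\Sq^1$ and $\Sq^2$ act trivially on the entire $E^1$-page, and therefore both composites $\Sq^1\Sq^2$ and $\Sq^2\Sq^1$ vanish. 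Hence $d^{\KGL/2}_{1}=0$.

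The argument is essentially a bookkeeping exercise once Lemma~\ref{lemma:KGL2differential} and Corollary~\ref{cor:sq-weight-1} are in hand, so there is no substantive obstacle; the only thing to double-check is that the case analysis in Corollary~\ref{cor:sq-weight-1} really does cover all parities of $n=q-p$ and that $\rho$-divisibility kills everything in sight.
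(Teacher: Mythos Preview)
Your proof is correct and follows essentially the same approach as the paper's: observe that $\sqrt{-1}\in F$ forces $\rho=0$, then apply Lemma~\ref{lemma:KGL2differential} and Corollary~\ref{cor:sq-weight-1}. The paper's one-line proof simply cites these two results; your version unpacks the Adem relation $\Sq^3=\Sq^1\Sq^2$ and notes explicitly that $\Sq^1$ and $\Sq^2$ each act as $\rho$-multiples on $h^{\ast,\ast}$, hence vanish when $\rho=0$, but the underlying logic is identical.
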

\begin{proof}
By assumption $\rho=0$, 
so the assertion follows from Lemma \ref{lemma:KGL2differential} and Corollary \ref{cor:sq-weight-1}.
\end{proof}

\subsection{Hermitian $K$-theory, I}
\label{subsection:diff-hermitian-k-theory}
In what follows we make of use the identification of $d^{\KGL/2}_{1}$ with the first Milnor operation $\Qop_{1}$ to give formulas for the 
$\sliced_{1}$-differential in the slice spectral sequence for $\KQ$.
To begin with, 
consider the commutative diagram:
\begin{equation}
\label{eq:diff-ko-k}
\begin{diagram}
\s_{q}(\KQ) & \rTo^{d^{\KQ}_{1}(q)} & \Sigma^{1,0}  \s_{q+1}(\KQ) \\
\dTo & & \dTo \\
\s_{q}(\KGL) & \rTo^{d^{\KGL}_{1}(q)} & \Sigma^{1,0}  \s_{q+1}(\KGL) \\
\dTo & & \dTo \\
\s_{q}(\KGL/2) & \rTo^{\Qop_{1}} & \Sigma^{1,0}  \s_{q+1}(\KGL/2) 
\end{diagram}
\end{equation}

To proceed from here requires two separate arguments depending on whether the $\sliced_{1}$-differential exits an even or an odd slice.
First we analyze the even slices.
\vspace{0.1in}

Theorems \ref{thm:slices-ko} and \ref{theorem:mysteriousvanishing} show that $\sliced_{1}^{\KQ}(2q)$ is a map from 
\[
\s_{2q}(\KQ)
= 
\Sigma^{2q,2q} \bigl(\Sigma^{2q,0} \MZZ\vee\bigvee_{i<0} \Sigma^{2q+2i,0}  \MZZ/2\bigr)
\]
to
\[
\Sigma^{1,0}  \s_{2q+1}(\KQ)
= 
\Sigma^{2q+1,2q+1} \bigvee_{i\leq 0} \Sigma^{2q+2i+1,0}  \MZZ/2. 
\]
Up to suspension with $S^{4q,2q}$, 
Proposition \ref{prop:slices-forget-hyper} shows $\s_{2q}(\KQ)\rTo\s_{2q}(\KGL)\rTo\s_{2q}(\KGL/2)$ in (\ref{eq:diff-ko-k}) 
is the projection map
\[ 
\MZZ\vee\bigvee_{i<0} \Sigma^{2i,0}  \MZZ/2 
\rTo 
\MZZ 
\rTo
\MZZ/2. 
\]
Likewise, 
up to suspension with $S^{4q,2q}$, 
the composite of the right hand side maps in (\ref{eq:diff-ko-k}) equals
\[ 
\Sigma^{3,1} \bigvee_{i<0} \Sigma^{2i+1,0}  \MZZ/2 
\rTo^{\pr} 
\Sigma^{2,1} \MZZ/2 
\rTo^\delta 
\Sigma^{3,1} \MZZ 
\rTo^{\pr} 
\Sigma^{3,1} \MZZ/2. 
\]
By Lemma \ref{lemma:integraltomod2andmod2tointegralweight1}, 
$\Sq^{2}\circ\mathrm{pr}$ is the only element of $[\MZZ,\Sigma^{2,1} \MZZ/2]$ whose composition with $\Sq^{1}$ and the projection 
$\MZZ\rTo\MZZ/2$ yields $\Qop_{1}\circ\mathrm{pr} = \Sq^{1}\Sq^{2}\circ\mathrm{pr}$. 
When restricting to $\MZZ$ this shows 
\[
d^{\KQ}_{1}(2q) 
= 
(0,\Sq^{2}\circ\mathrm{pr},a(2q)\tau\circ\mathrm{pr});
\;
a(2q)\in h^{0,0}.
\] 
On odd slices the first differential $\sliced_{1}^{\KQ}(2q+1)$ is a map from 
\[
\s_{2q+1}(\KQ)
= 
\Sigma^{2q+1,2q+1} \bigvee_{i\leq 0} \Sigma^{2q+2i,0}  \MZZ/2
\]
to
\[
\Sigma^{1,0}  \s_{2q+2}(\KQ)
= 
\Sigma^{2q+2,2q+2} \bigl(\Sigma^{2q+3,0} \MZZ\vee\bigvee_{i\leq 0} \Sigma^{2q+2i+1,0}  \MZZ/2\bigr).
\]
Restricting $\sliced_{1}^{\KQ}(2q+1)$ to the top summand $\MZZ/2$ corresponding to $i=0$ in the infinite wedge product yields 
-- up to suspension with $S^{4q+1,2q+1}$ -- 
a map
\[ 
\sliced_{1}^{\KQ}(2q+1,0)
\colon 
\MZZ/2 
\rTo 
\Sigma^{4,1}  \MZZ \vee \Sigma^{2,1}  \MZZ/2 \vee \Sigma^{0,1}  \MZZ/2. 
\]
Lemma \ref{lemma:integraltomod2andmod2tointegralweight1} shows $\delta\Sq^{2}\Sq^{1}$ is the only element of $[\MZZ/2,\Sigma^{4,1} \MZZ]$ 
whose composition with the projection map $\mathrm{pr}$ equals $\Qop_{1}\Sq^{1}=\Sq^{3}\Sq^{1}$.
Thus by commutativity of (\ref{eq:diff-ko-k}), 
we obtain
\[
\sliced_{1}^{\KQ}(2q+1) 
= 
(\delta\Sq^{2}\Sq^{1},b(2q+1)\Sq^{2}+\phi(2q+1)\Sq^{1},a(2q+1)\tau),
\]
where $a(2q+1)$, $b(2q+1)\in h^{0,0}$ and $\phi(2q+1)\in h^{1,1}$.
\vspace{0.1in}

In what follows we use the computations in this section to explicitly identify $\sliced_{1}^{\KT}$ and $\sliced_{1}^{\KQ}$ in terms of 
motivic cohomology classes and Steenrod operations.

\subsection{Higher Witt-theory}
Up to suspension with $S^{q,q}$, 
Theorem \ref{section:slicesWitttheory} shows that the differential
\[ 
\sliced_{1}^{\KT}(q)
\colon 
\s_{q}(\KT) 
\rTo \Sigma^{1,0}  \s_{q+1}(\KT) 
\]
takes the form
\[ 
\bigvee_{i\in \ZZ} \Sigma^{2i,0}  \MZZ/2 
\rTo 
\Sigma^{2,1}  \bigvee_{j\in \ZZ} \Sigma^{2j,0} \MZZ/2. 
\]
Lemma \ref{lemma:shift-slices} and $(1,1)$-periodicity $\Sigma^{1,1}  \KT\iso \KT$ for $\KT$ imply 
$\sliced_{1}^{\KT}(q)= \Sigma^{q,q}  \sliced_{1}^{\KT}(0)$.
Let $\sliced_{1}^{\KT}(q,2i)$ denote the restriction of $\sliced_{1}^{\KT}(q)$ to the summand $\Sigma^{q+2i,q}  \MZZ/2$ of $\s_{q}(\KT)$. 
The $(4,0)$-periodicity $\Sigma^{4,0}  \KT\iso \KT$ shows $\sliced_{1}^{\KT}(0)$ is determined by its values on the summands
$\MZZ/2$ and $\Sigma^{2,0}  \MZZ/2$.
That is, 
$\sliced_{1}^{\KT}(q,2i)$ is uniquely determined by $\sliced_{1}^{\KT}(0,0)$ or $\sliced_{1}^{\KT}(0,2)$.
By Proposition~\ref{prop:sum-product} the first differential is determined by its value on each summand in its target. 
\begin{theorem}
\label{thm:diff-witt-theory}
The $\sliced_{1}$-differential in the slice spectral sequence for $\KT$ is given by
\[ 
\sliced_{1}^{\KT}(q,i) 
= 
\begin{cases} 
(\Sq^{3}\Sq^{1},\Sq^{2},0) & i-2q \equiv 0\bmod 4 \\
(\Sq^{3}\Sq^{1},\Sq^{2}+\rho\Sq^{1},\tau) & i-2q \equiv 2\bmod 4.
\end{cases} 
\]
The motivic cohomology classes $0\neq\tau\in h^{0,1}$ and $\rho=[-1]\in h^{1,1}$ are represented by $-1\in F$.
\end{theorem}
\begin{proof}
We have reduced to computing the maps
\begin{diagram}
\sliced_{1}^{\KT}(0,0)\colon \MZZ/2 & \rTo & \Sigma^{4,1}  \MZZ/2 \vee \Sigma^{2,1}  \MZZ/2 \vee \Sigma^{0,1}  \MZZ/2 \\
\sliced_{1}^{\KT}(0,2)\colon \Sigma^{2,0}  \MZZ/2 & \rTo & \Sigma^{6,1}  \MZZ/2 \vee \Sigma^{4,1}  \MZZ/2 \vee \Sigma^{2,1}  \MZZ/2. 
\end{diagram}  
To proceed we invoke the commutative diagram:
\begin{diagram}
\s_{q}(\KQ) & \rTo & \s_{q}(\KT) \\
\dTo^{\sliced_{1}^{\KQ}(q)}& & \dTo_{\sliced_{1}^{\KT}(q)} \\
\Sigma^{1,0}  \s_{q+1}(\KQ) & \rTo & \Sigma^{1,0}  \s_{q+1}(\KT) 
\end{diagram}
By combining Proposition \ref{prop:slices-unit-kt} with the computations in Section~\ref{subsection:diff-hermitian-k-theory} we find
\begin{align*}
\sliced_{1}^{\KT}(0,0) & = (\Sq^{3}\Sq^{1},\Sq^{2}+\phi(0)\Sq^{1},a(0)\tau) \\
\sliced_{1}^{\KT}(0,2) & = (\Sq^{3}\Sq^{1},\Sq^{2}+\phi(2)\Sq^{1},a(2)\tau). 
\end{align*}
Here $a(0)$, $a(2)\in h^{0,0}$ and $\phi(0)$, $\phi(2)\in h^{1,1}$.
Since $\sliced_{1}^{\KT}$ squares to zero, 
we extract the equations
\[
a(2)a(0)=
a(0)\phi(2)=
a(0)(\phi(0)+\phi(2)+\rho)=0,
\;\;
a(0)+a(2)=1,
\]
\[
a(0)\rho+\phi(0)=
a(2)\phi(0)=
a(2)(\phi(0)+\phi(2)+\rho)=
a(2)\rho+\phi(2)=
0.
\]
The proof of Proposition~\ref{prop:convergence-witt-0} discusses
the two possible values for $a(2)$ in the case of an algebraically
closed field. In particular, having $a(2)=0$ would result in 
the group $\pi_{2,0}\KT$ being nontrivial, which would contradict
the vanishing of Balmer's higher Witt groups of fields 
in degrees not congruent to zero modulo four \cite[Theorem 98]{Balmer:handbook}. 
Hence $a(2)=1$ for algebraically closed fields, which extends
to all fields by base change to an algebraic closure.
This implies $a(0)=0$, $\phi(0)=0$, and $\phi(2)=\rho$. 
\end{proof}

\begin{proposition}\label{prop:convergence-witt-0}
Suppose $F$ has mod-$2$ cohomological dimension zero. 
Then the slice filtration for $\KT$ coincides with the fundamental ideal filtration of $W(F)$. 
Moreover,
there are isomorphisms
\[
\pi_{p,0}\f_q(\KT) 
\cong
\begin{cases} 
h^{0,q} & p \equiv q \bmod 4, q\geq 0 \\ 
h^{0,0} & p\equiv 0\bmod 4,q<0 \\
0 & \mathrm{otherwise.}
\end{cases}
\]
The first two isomorphisms are induced by the canonical map $\f_q(\KT)\rTo \s_q(\KT)$.
\end{proposition}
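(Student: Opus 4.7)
The plan is to exploit the dramatic simplifications imposed by the vanishing mod-$2$ cohomological dimension. Since $H^1(F;\mathbb{Z}/2) \cong F^\times/(F^\times)^2$ vanishes, $F$ is quadratically closed; hence $W(F) \cong \mathbb{F}_2$, $I(F) = 0$, and the mod-$2$ motivic cohomology satisfies $h^{p,q} = \mathbb{F}_2$ exactly when $p = 0$ and $q \geq 0$, vanishing otherwise. The first assertion of the proposition is then immediate from Corollary~\ref{cor:sliceisideal}: both the slice and fundamental-ideal filtrations on $W(F)$ collapse to the trivial one-step filtration $W(F) \supset 0$.

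For the second assertion I would first simplify the slice spectral sequence. Under the hypothesis, $E^1_{p,q'}(\KT) = \bigoplus_i h^{2i+q'-p, q'}$ reduces to $\mathbb{F}_2$ exactly when $p \equiv q' \bmod 2$ and $q' \geq 0$, and is zero otherwise. Of the three components of $d_1^{\KT}$ in Theorem~\ref{thm:diff-witt-theory}, the $\Sq^3\Sq^1$ and $\Sq^2$ components land in groups $h^{a, q'+1}$ with $a \geq 2$ and hence vanish; the $\rho\Sq^1$ term vanishes because $\rho = 0$; only the $\tau$-component remains, and by Lemma~\ref{lem:tau-iso} it is an isomorphism $h^{0,q'} \to h^{0,q'+1}$ whenever it is nonzero. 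Combining Theorem~\ref{thm:diff-witt-theory} with the $(1,1)$- and $(4,0)$-periodicities of $\KT$ to transport the formula to all summands, I would conclude that $d_1 \colon E^1_{p,q'} \to E^1_{p-1,q'+1}$ is an isomorphism precisely when $p - q' \equiv 2 \bmod 4$ and zero when $p - q' \equiv 0 \bmod 4$.

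The proof is then completed by inspection of the truncated slice spectral sequence for $\f_q(\KT)$, which coincides with that of $\KT$ on rows $q' \geq q$ and is zero below. For $q \leq 0$ the rows $q \leq q' < 0$ are already zero, so $\pi_{p,0}\f_q(\KT) \cong \pi_{p,0}\KT$, and Theorem~\ref{thm:e2-witt} together with convergence identifies this as $h^{0,0} = \mathbb{F}_2$ for $p \equiv 0 \bmod 4$ and zero otherwise. For $q \geq 0$ the $\tau$-pattern of $d_1$ pairs off every $\mathbb{F}_2$ on rows $q' > q$ with another via a $d_1$-isomorphism, while on row $q$ itself only the entries with $p \equiv q \bmod 4$ survive (the rest being killed by outgoing $d_1$-isomorphisms, with no compensating incoming contribution from the truncated-away row). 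Higher differentials vanish for sparsity reasons, so $E^2 = E^\infty$, and the single nontrivial filtration piece yields the claimed isomorphism $\pi_{p,0}\f_q(\KT) \cong h^{0,q}$ induced by the edge map to $\s_q(\KT)$.

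The main technical obstacle will be the uniform verification of the $d_1$-pattern across all summand indices: Theorem~\ref{thm:diff-witt-theory} explicitly treats only the cases $i - 2q \equiv 0, 2 \bmod 4$, and extending it to every summand requires careful invocation of the $(4,0)$- and $(1,1)$-periodicities of $\KT$ together with their compatibility with the slice tower and its $d_1$-differentials.
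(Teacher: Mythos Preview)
Your argument is circular. In the paper, Proposition~\ref{prop:convergence-witt-0} is invoked \emph{inside} the proof of Theorem~\ref{thm:diff-witt-theory} to pin down the constant $a(2)=1$ (and hence $a(0)=0$); before that step, the analysis only shows $d_1^{\KT}(0,0)=(\Sq^3\Sq^1,\Sq^2+\phi(0)\Sq^1,a(0)\tau)$ and $d_1^{\KT}(0,2)=(\Sq^3\Sq^1,\Sq^2+\phi(2)\Sq^1,a(2)\tau)$ with $a(0)+a(2)=1$. So when you import Theorem~\ref{thm:diff-witt-theory} to deduce that ``$d_1$ is an isomorphism precisely when $p-q'\equiv 2\bmod 4$'', you are assuming exactly what this proposition is supposed to establish. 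The same circularity afflicts your appeal to Corollary~\ref{cor:sliceisideal} and to Theorem~\ref{thm:e2-witt}: both sit downstream of the full differential formula, hence downstream of Proposition~\ref{prop:convergence-witt-0}.

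The paper's proof avoids this in two ways. First, the identification of the slice filtration with the (trivial) fundamental ideal filtration is done directly: the map $\pi_{0,0}\f_0(\KT)\to\pi_{0,0}\s_0(\KT)\cong h^{0,0}$ is a ring map, hence an isomorphism, forcing $\pi_{0,0}\f_1(\KT)=0$ and Hausdorffness. Second, for the filtration computation the proof uses only the \emph{partial} information available at that point, namely that the $\tau$-component of $d_1$ is governed by either $a(0)=1$ or $a(2)=1$; this yields two candidate $E^2$-pages, and the wrong one is eliminated because it would give $E^\infty_{2,0}\cong h^{0,0}\neq 0$, contradicting $\pi_{2,0}(\KT)=0$. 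Your proposal should follow this case distinction rather than assume the outcome.

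A minor remark: your worry that Theorem~\ref{thm:diff-witt-theory} ``only treats $i-2q\equiv 0,2\bmod 4$'' is misplaced. In the notation $d_1^{\KT}(q,2i)$ the second argument is always even, so those two congruence classes exhaust all summands; no further periodicity argument is needed there.
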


\begin{proof}
By the $(1,1)$- and $(4,0)$-periodicities of $\KT$ and Theorem \ref{thm:slices-witt-theory}, 
it suffices to consider the filtration
\begin{diagram}
\dotsm & & h^{0,n+1} & & 0 & & \\
& & \dTo & & \dTo & & \\
\dotsm & \rTo & \pi_{p,-n}\f_{2}(\KT) & \rTo & \pi_{p,-n}\f_{1}(\KT) & \rTo & \pi_{p,-n}\f_{0}(\KT) \\
& & \dTo & & \dTo & & \dTo \\
\dotsm & & h^{0,n+2} & & 0 & & h^{0,n}
\end{diagram}
for $p$ even, 
and 
\begin{diagram}
\dotsm & & 0 & & h^{0,n} & & \\
& & \dTo & & \dTo & & \\
\dotsm & \rTo & \pi_{p,-n}\f_{2}(\KT) & \rTo & \pi_{p,-n}\f_{1}(\KT) & \rTo & \pi_{p,-n}\f_{0}(\KT) \\
& & \dTo & & \dTo & & \dTo \\
\dotsm   & & 0 & & h^{0,n+1} & & 0
\end{diagram}
for $p$ odd.
When $n=p=0$, 
$\pi_{0,0}\f_{0}(\KT)\rTo h^{0,0}$ is a ring map \cite{GRSO}, \cite{Pelaez},
hence an isomorphism.
It follows that $\pi_{0,0}\f_{1}(\KT)=0$, 
hence $\f_q \pi_{0,0}(\KT) = 0$ for all $q>0$.
In particular, 
the slice filtration is Hausdorff, 
and it coincides with the (trivial) filtration on the Witt ring given by the fundamental ideal.
Theorem \ref{thm:diff-witt-theory} and Lemma~\ref{lem:tau-iso} leave us with two possibilities:
\begin{enumerate}
\item 
The first differential
\[ 
E^{1}_{p,q} 
=
h^{0,q} 
\rTo 
h^{0,q+1} 
=
E^{1}_{p-1,q+1} 
\]
is an isomorphism if $q\equiv p+2\bmod 4$, 
and trivial otherwise.
\item 
The first differential
\[ 
E^{1}_{p,q} 
= 
h^{0,q} 
\rTo 
h^{0,q+1} 
=
E^{1}_{p-1,q+1} 
\]
is an isomorphism if $q\equiv p\bmod 4$, 
and trivial otherwise.
\end{enumerate}
In both cases we find $E^{2}=E^{\infty}$.
Our case distinctions can be recast in the following way.
\begin{enumerate}
\item 
$E^\infty_{p,q}\cong h^{0,0}$ if $q=0$ and $p\equiv 0\bmod 4$, 
and trivial otherwise.
\item
$E^\infty_{p,q}\cong h^{0,0}$ if $q=0$ and $p\equiv 2 \bmod 4 $, 
and trivial otherwise.
\end{enumerate}
The second case contradicts the vanishing of $\pi_{2,0}(\KT)$. 
Computing with the first condition yields the desired result on the filtration.
One extends to arbitrary $n$ by using the commutative diagram:
\begin{diagram}
\pi_{p,n} \f_{q+i}(\KT) & \rTo & \pi_{p,n} \f_{q}(\KT) \\
\dTo^{\iso} & & \dTo_\iso \\
\pi_{p-n,0} \f_{q-n+i}(\KT) & \rTo & \pi_{p-n,0} \f_{q-n}(\KT) 
\end{diagram}
\end{proof}

The following image depicts the first differential when $2q\equiv i\bmod 4$ with degrees along the horizontal axis and weights 
along the vertical axis.
Each dot is a suspension of $\MZZ/2$.
\begin{center}
  \pgfsetshortenend{2pt}
  \pgfsetshortenstart{2pt}
\begin{tikzpicture}[scale=1.0,line width=1pt]
{\draw[fill]     
(0,0) circle (1pt) (-2,0) circle (1pt) (-4,0) circle (1pt) (2,0) circle (1pt) (4,0) circle (1pt) 
(-5,0) circle (0pt) node[left=-1pt] {$q$}
;}
{\draw[fill]     
(1,1) circle (1pt) (-1,1) circle (1pt) (-3,1) circle (1pt) (3,1) circle (1pt) (5,1) circle (1pt) 
(-5,1) circle (0pt) node[left=-1pt] {$q+1$}
;}
{\draw[fill]     
(1,-1) circle (1pt) (-1,-1) circle (1pt) (-3,-1) circle (1pt) (3,-1) circle (1pt) (5,-1) circle (1pt) 
(-5,-1) circle (0pt) node[left=-1pt] {$q-1$}
;}
{\draw[fill]     
(0,2) circle (1pt) (-2,2) circle (1pt) (-4,2) circle (1pt) (2,2) circle (1pt) (4,2) circle (1pt) 
(-5,2) circle (0pt) node[left=-1pt] {$q+2$}
;}
{\draw[fill]     
(0,-2) circle (0pt) node[below=-1pt] {$i$} 
(-2,-2) circle (0pt) node[below=-1pt] {$i-2$}
(-4,-2) circle (0pt) node[below=-1pt] {$i-4$}
(2,-2) circle (0pt) node[below=-1pt] {$i+2$}
(4,-2) circle (0pt) node[below=-1pt] {$i+4$}
;}
{\draw[fill,blue]     
(-1.5,-1) circle (0pt) node[below=5pt] {$\tau$}
;}
{\draw[fill,red]     
(0,-1) circle (0pt) node[below=1pt] {$\Sq^2$}
;}
{\draw[fill,orange]     
(2,-1) circle (0pt) node[below=1pt] {$\Sq^2+\rho\Sq^1$}
;}
{\draw[fill,green]     
(4,-1) circle (0pt) node[below=1pt] {$\Sq^3\Sq^1$}
;}

{\draw[blue,->] 
(-3,-1) -- (-4,0)
;}
{\draw[orange,->] 
(-3,-1) -- (-2,0)
;}
{\draw[green,->] 
(-3,-1) -- (0,0)
;}
{\draw[red,->] 
(-1,-1) -- (0,0)
;}
{\draw[green,->] 
(-1,-1) -- (2,0)
;}
{\draw[blue,->] 
(1,-1) -- (0,0)
;}
{\draw[orange,->] 
(1,-1) -- (2,0)
;}
{\draw[green,->] 
(1,-1) -- (4,0)
;}
{\draw[red,->] 
(3,-1) -- (4,0)
;}
{\draw[green] 
(3,-1) -- (5.5,-.16)
;}
{\draw[blue,->]
(5,-1) -- (4,0)
;}
{\draw[orange]
(5,-1) -- (5.5,-.5)
;}
{\draw[green,->] 
(-4,-.67) -- (-2,0)
;}
{\draw[green,->] 
(-4,.67) -- (-3,1)
;}
{\draw[red,->] 
(-4,0) -- (-3,1)
;}
{\draw[green,->] 
(-4,0) -- (-1,1)
;}
{\draw[blue,->] 
(-2,0) -- (-3,1)
;}
{\draw[orange,->] 
(-2,0) -- (-1,1)
;}
{\draw[green,->] 
(-2,0) -- (1,1)
;}
{\draw[red,->] 
(0,0) -- (1,1)
;}
{\draw[green,->] 
(0,0) -- (3,1)
;}
{\draw[blue,->] 
(2,0) -- (1,1)
;}
{\draw[orange,->] 
(2,0) -- (3,1)
;}
{\draw[green,->] 
(2,0) -- (5,1)
;}
{\draw[red,->] 
(4,0) -- (5,1)
;}
{\draw[green] 
(4,0) -- (5.5,.5)
;}
{\draw[blue,->]
(5.5,0.5) -- (5,1)
;}
{\draw[red,->] 
(-3,1) -- (-2,2)
;}
{\draw[green,->] 
(-3,1) -- (0,2)
;}
{\draw[green,->] 
(-4,1.33) -- (-2,2)
;}
{\draw[blue,->] 
(-1,1) -- (-2,2)
;}
{\draw[orange,->] 
(-1,1) -- (0,2)
;}
{\draw[green,->] 
(-1,1) -- (2,2)
;}
{\draw[red,->] 
(1,1) -- (2,2)
;}
{\draw[green,->] 
(1,1) -- (4,2)
;}
{\draw[blue,->] 
(3,1) -- (2,2)
;}
{\draw[orange,->] 
(3,1) -- (4,2)
;}
{\draw[green] 
(3,1) -- (5.5,1.83)
;}
{\draw[red]
(5,1) -- (5.5,1.5)
;}
{\draw[green] 
(5,1) -- (5.5,1.17)
;}

\end{tikzpicture}
\end{center}

\subsection{Hermitian $K$-theory, II}
\label{section:diff-hermitian-k-theory-2}
Next we determine $\sliced_{1}^{\KQ}$ by combining Proposition \ref{prop:slices-unit-kt} with the formula for $\sliced_{1}^{\KT}$ in Theorem \ref{thm:diff-witt-theory}.
Recall the identifications 
\begin{align*} 
\s_{2q}(\KQ) 
& 
\iso 
\Sigma^{2q,2q}  \bigl(\Sigma^{2q,0}  \MZZ \vee \bigvee_{i<q} \Sigma^{2i,0}  \MZZ/2\bigr) \\
\s_{2q+1}(\KQ) 
& 
\iso \Sigma^{2q+1,2q+1}  \bigvee_{i\leq q} \Sigma^{2i,0}  \MZZ/2. 
\end{align*}

Let $\sliced_{1}^{\KQ}(q,2i)$ denote the restriction of $\sliced_{1}^{\KQ}(q)$ to the $2i$th summand $\Sigma^{2i,0}  \MZZ/2$ of $\s_{q}(\KQ)$, 
where $i\leq\lfloor\frac{q}{2}\rfloor$. 
There are canonical maps $\delta\colon\MZZ/2\rTo \Sigma^{1,0}  \MZZ$ and $\pr\colon\MZZ\rTo\MZZ/2$, 
cf.~Appendix \ref{section:endom-motiv-eilenb}.
Inspection of the computations in Section~\ref{subsection:diff-hermitian-k-theory} and Theorem~\ref{thm:diff-witt-theory} yields: 
\begin{theorem}
\label{thm:diff-ko}
The $\sliced_{1}$-differential in the slice spectral sequence for $\KQ$ is given by
\begin{align*}
\sliced_{1}^{\KQ}(q,i) 
& =  
\begin{cases}
(\Sq^{3}\Sq^{1},\Sq^{2},0) &  q-1> i\equiv 0\bmod 4 \\
(\Sq^{3}\Sq^{1},\Sq^{2}+\rho\Sq^{1},\tau) &  q-1> i\equiv 2\bmod 4 \\
\end{cases} \\
d^{\KQ}_{1}(q,q) 
& =  
\begin{cases}
(0,\Sq^{2}\circ\pr,0) & q\equiv 0 \bmod 4 \\
(0,\Sq^{2}\circ \pr,\tau\circ \pr) & q\equiv 2 \bmod 4 \\
\end{cases} \\
d^{\KQ}_{1}(q,q-1) 
& =  
\begin{cases}
(\delta\Sq^{2}\Sq^{1}, \Sq^{2},0) & q\equiv 1 \bmod 4 \\
(\delta\Sq^{2}\Sq^{1}, \Sq^{2}+\rho\Sq^{1},\tau) & q\equiv 3 \bmod 4. 
\end{cases}  
\end{align*}
\end{theorem}

The following image depicts the first differential for $\KQ$ with degrees along the horizontal axis and weights along the vertical axis.
Each small dot is a suspension of $\MZZ/2$ while a large square is a suspension of $\MZZ$.
\begin{center}
  \pgfsetshortenend{2pt}
  \pgfsetshortenstart{2pt}
\begin{tikzpicture}[scale=1.0,line width=1pt]
{\draw[fill]     
(-3,-1) circle (1pt) 
(-5,-1) circle (0pt) node[left=-1pt] {$4q-1$}
;}
{\draw[fill]     
(0,0) circle (0pt) (-2,0) circle (1pt) (-4,0) circle (1pt) 
(-5,0) circle (0pt) node[left=-1pt] {$4q$}
;}
{\draw[fill]     
(1,1) circle (1pt) (-1,1) circle (1pt) (-3,1) circle (1pt) 
(-5,1) circle (0pt) node[left=-1pt] {$4q+1$}
;}
{\draw[fill]     
(0,2) circle (1pt) (-2,2) circle (1pt) (-4,2) circle (1pt) (2,2) circle (1pt) (4,2) circle (0pt) 
(-5,2) circle (0pt) node[left=-1pt] {$4q+2$}
;}
{\draw[fill]     
(5,3) circle (1pt) (3,3) circle (1pt) (1,3) circle (1pt) (-1,3) circle (1pt) (-3,3) circle (1pt) 
(-5,3) circle (0pt) node[left=-1pt] {$4q+3$}
;}
{\draw[fill]     
(0,-2) circle (0pt) node[below=-1pt] {$8q$} 
(-2,-2) circle (0pt) node[below=-1pt] {$8q-2$}
(-4,-2) circle (0pt) node[below=-1pt] {$8q-4$}
(2,-2) circle (0pt) node[below=-1pt] {$8q+2$}
(4,-2) circle (0pt) node[below=-1pt] {$8q+4$}
;}
{\draw[fill,blue]     
(-4.5,-1) circle (0pt) node[below=5pt] {$\tau$}
;}
{\draw[fill,blue!50!black]     
(-3.5,-1) circle (0pt) node[below=5pt] {$\tau\circ \pr$}
;}
{\draw[fill,red]     
(-2.3,-1) circle (0pt) node[below=1pt] {$\Sq^2$}
;}
{\draw[fill,red!50!black]     
(-0.9,-1) circle (0pt) node[below=1pt] {$\Sq^2\circ\pr$}
;}
{\draw[fill,orange]     
(1.1,-1) circle (0pt) node[below=1pt] {$\Sq^2+\rho\Sq^1$}
;}
{\draw[fill,green!50!black]     
(3.1,-1) circle (0pt) node[below=1pt] {$\delta \circ \Sq^2\Sq^1$}
;}
{\draw[fill,green]     
(4.9,-1) circle (0pt) node[below=1pt] {$\Sq^3\Sq^1$}
;}
\node at (0,0) [shape=rectangle,draw,fill]{};
\node at (4,2) [shape=rectangle,draw,fill]{};
{\draw[blue,->] 
(-3,-1) -- (-4,0)
;}
{\draw[orange,->] 
(-3,-1) -- (-2,0)
;}
{\draw[green!50!black,->] 
(-3,-1) -- (0,0)
;}
{\draw[green,->] 
(-4,-.67) -- (-2,0)
;}
{\draw[red!50!black,->] 
(0,0) -- (1,1)
;}
{\draw[red,->] 
(-4,0) -- (-3,1)
;}
{\draw[green,->] 
(-4,0) -- (-1,1)
;}
{\draw[green,->] 
(-4,.67) -- (-3,1)
;}
{\draw[blue,->] 
(-2,0) -- (-3,1)
;}
{\draw[orange,->] 
(-2,0) -- (-1,1)
;}
{\draw[green,->] 
(-2,0) -- (1,1)
;}
{\draw[red,->] 
(1,1) -- (2,2)
;}
{\draw[green!50!black,->] 
(1,1) -- (4,2)
;}
{\draw[red,->] 
(-3,1) -- (-2,2)
;}
{\draw[green,->] 
(-3,1) -- (0,2)
;}
{\draw[green,->] 
(-4,1.33) -- (-2,2)
;}
{\draw[blue,->] 
(-1,1) -- (-2,2)
;}
{\draw[orange,->] 
(-1,1) -- (0,2)
;}
{\draw[green,->] 
(-1,1) -- (2,2)
;}
{\draw[red,->] 
(-4,2) -- (-3,3)
;}
{\draw[green,->] 
(-4,2) -- (-1,3)
;}
{\draw[green,->] 
(-4,2.67) -- (-3,3)
;}
{\draw[red,->] 
(-2,2) -- (-1,3)
;}
{\draw[green,->] 
(-2,2) -- (1,3)
;}
{\draw[orange,->] 
(0,2) -- (1,3)
;}
{\draw[blue,->] 
(0,2) -- (-1,3)
;}
{\draw[green,->] 
(0,2) -- (3,3)
;}
{\draw[red,->] 
(2,2) -- (3,3)
;}
{\draw[green,->] 
(2,2) -- (5,3)
;}
{\draw[red!50!black,->] 
(4,2) -- (5,3)
;}
{\draw[blue!50!black,->] 
(4,2) -- (3,3)
;}
\end{tikzpicture}
\end{center}

\section{Milnor's conjecture on quadratic forms}
\label{section:milnorconjecture}

In this section we compute the slice spectral sequence of $\KT$ over any field $F$ of $\Char(F)\neq 2$.
Note that $\KT$ acquires a ring spectrum structure from hermitian $K$-theory $\KQ$ and the tower (\ref{equation:KQKTtower}).
Recall from Theorem \ref{thm:slices-witt-theory},
cf.~Example \ref{example:KT}, 
the identification
\[ 
\s_{0}(\KT)
\cong
\bigvee_{i\in\ZZ}\Sigma^{2i,0}\MZZ/2.
\]
By the periodicity $\Sigma^{1,1}  \KT\iso \KT$ induced by multiplication with the Hopf map, this determines all slices of $\KT$.
Recall that the first differential $\s_q(\KT) \rTo \Sigma^{1,0}  \s_{q+1}(\KT)$ is determined by the motivic Steenrod operations in (\ref{equation:d1KTdifferentials}), 
a formula proven in Theorem~\ref{thm:diff-witt-theory}. 
With the elements $\mathbf{\textcolor{blue}{\tau}}$,
$\mathbf{\textcolor{red}{\Sq^{2}}}$,
$\mathbf{\textcolor{orange}{\Sq^{2}+\rho\Sq^{1}}}$,
and $\mathbf{\textcolor{green}{\Sq^{3}\Sq^{1}}}$ corresponding to their given colors, 
the first differentials can be represented as follows. 
The group in bidegree $(p,q)$ is a direct sum of mod-$2$ motivic cohomology groups positioned on the vertical line above $p$
and inbetween the horizontal lines corresponding to weights $q$ and $q+1$.
The number of direct summands increases linearly with the weight:

\begin{center}
  \pgfsetshortenend{2pt}
  \pgfsetshortenstart{2pt}
\begin{tikzpicture}[scale=1.0,font=\scriptsize,line width=1pt]
\draw[help lines] (-.5,0) grid (9.5,5.2);
\foreach \i in {0,...,5} {\node[label=left:$\i$] at (-.5,\i) {};}
\foreach \i in {0,...,9} {\node[label=below:$\i$] at (\i,-.1) {};}

\foreach \i in {0,...,5} {\draw[fill] (0,\i) circle (1pt);}
{\draw[fill]     
(0,2.5) circle (1pt) 
(0,3.5) circle (1pt) 
(0,4.33) circle (1pt) 
(0,4.66) circle (1pt) 
;}

{\draw[fill]      
(1,1) circle (1pt) 
(1,2) circle (1pt) 
(1,3) circle (1pt) 
(1,3.5) circle (1pt) 
(1,4) circle (1pt) 
(1,4.5) circle (1pt) 
(1,5) circle (1pt) 
;}

\foreach \i in {0,...,5} {\draw[fill] (2,\i) circle (1pt);}
{\draw[fill]       (2,2.5) circle (1pt) 
(2,2.5) circle (1pt) 
(2,3.5) circle (1pt) 
(2,4.33) circle (1pt) 
(2,4.66) circle (1pt) 
;}

{\draw[fill]      
(3,1) circle (1pt) 
(3,2) circle (1pt) 
(3,3) circle (1pt) 
(3,3.5) circle (1pt) 
(3,4) circle (1pt) 
(3,4.5) circle (1pt) 
(3,5) circle (1pt) 
;}

\foreach \i in {0,...,5} {\draw[fill] (4,\i) circle (1pt);}
{\draw[fill]       
(4,2.5) circle (1pt) 
(4,3.5) circle (1pt) 
(4,4.33) circle (1pt) 
(4,4.66) circle (1pt) 
;}

{\draw[fill]      
(5,1) circle (1pt) 
(5,2) circle (1pt) 
(5,3) circle (1pt) 
(5,3.5) circle (1pt) 
(5,4) circle (1pt) 
(5,4.5) circle (1pt) 
(5,5) circle (1pt) 
;}

\foreach \i in {0,...,5} {\draw[fill] (6,\i) circle (1pt);}
{\draw[fill]       (6,2.5) circle (1pt) 
(6,2.5) circle (1pt) 
(6,3.5) circle (1pt) 
(6,4.33) circle (1pt) 
(6,4.66) circle (1pt) 
;}

{\draw[fill]      
(7,1) circle (1pt) 
(7,2) circle (1pt) 
(7,3) circle (1pt) 
(7,3.5) circle (1pt) 
(7,4) circle (1pt) 
(7,4.5) circle (1pt) 
(7,5) circle (1pt) 
;}

\foreach \i in {0,...,5} {\draw[fill]  (8,\i) circle (1pt) ;}
{\draw[fill]       (8,2.5) circle (1pt) 
(8,2.5) circle (1pt) 
(8,3.5) circle (1pt) 
(8,4.33) circle (1pt)
(8,4.66) circle (1pt)
;}

{\draw[fill]      
(9,1) circle (1pt) 
(9,2) circle (1pt) 
(9,3) circle (1pt) 
(9,3.5) circle (1pt)
(9,4) circle (1pt) 
(9,4.5) circle (1pt) 
(9,5) circle (1pt) 
;}

{\draw[blue,->]
(1,3.5) -- (0,4.66);
\draw[blue,->]
(2,0) -- (1,1);
\draw[blue,->]
(2,1) -- (1,2);
\draw[blue,->]
(2,2) -- (1,3);
\draw[blue,->]
(2,3) -- (1,4);
\draw[blue,->]
(2,4) -- (1,5);
\draw[blue,->]
(3,1) -- (2,2.5);
\draw[blue,->]
(3,2) -- (2,3.5);
\draw[blue,->]
(3,3) -- (2,4.33);
\draw[blue,->]
(4,2.5) -- (3,3.5);
\draw[blue,->]
(4,3.5) -- (3,4.5);
\draw[blue,->]
(5,3.5) -- (4,4.66);
\draw[blue,->]
(6,0) -- (5,1);
\draw[blue,->]
(6,1) -- (5,2);
\draw[blue,->]
(6,2) -- (5,3);
\draw[blue,->]
(6,3) -- (5,4);
\draw[blue,->]
(6,4) -- (5,5);
\draw[blue,->]
(7,1) -- (6,2.5);
\draw[blue,->]
(7,2) -- (6,3.5);
\draw[blue,->]
(7,3) -- (6,4.33);
\draw[blue,->]
(8,2.5) -- (7,3.5);
\draw[blue,->]
(8,3.5) -- (7,4.5);
\draw[blue,->]
(9,3.5) -- (8,4.66);
}

{\draw[->,red]
(2,2.5) -- (1,3);
\draw[red,->]
(2,3.5) -- (1,4);
\draw[red,->]
(2,4.33) -- (1,5);
\draw[red,->]
(3,3.5) -- (2,4.33);
\draw[red,->]
(6,2.5) -- (5,3);
\draw[red,->]
(6,3.5) -- (5,4);
\draw[red,->]
(6,4.33) -- (5,5);
\draw[red,->]
(7,3.5) -- (6,4.33);
;}
{\draw[->,orange] 
(3,1) -- (2,2);
\draw[orange,->]
(3,2) -- (2,3);
\draw[orange,->]
(3,3) -- (2,4);
\draw[orange,->]
(3,4) -- (2,5);
\draw[orange,->]
(4,2.5) -- (3,3);
\draw[orange,->]
(4,3.5) -- (3,4);
\draw[orange,->]
(4,4.33) -- (3,5);
\draw[orange,->]
(7,1) -- (6,2);
\draw[orange,->]
(7,2) -- (6,3);
\draw[orange,->]
(7,3) -- (6,4);
\draw[orange,->]
(7,4) -- (6,5);
\draw[orange,->]
(8,2.5) -- (7,3);
\draw[orange,->]
(8,3.5) -- (7,4);
\draw[orange,->]
(8,4.33) -- (7,5);
;}
{\draw[->,green]
(3,3.5) -- (2,4);
\draw[green,->]
(3,4.5) -- (2,5);
\draw[green,->]
(7,3.5) -- (6,4);
\draw[green,->]
(7,4.5) -- (6,5)
;}
\end{tikzpicture}
\end{center}

The next statement is an immediate consequence of Voevodsky's proof of Milnor's conjecture on Galois cohomology \cite{Voevodsky:Z/2}.
Recall the classes $\tau\in h^{0,1}$ and $\rho\in h^{1,1}$ are represented by $-1\in F$.  
\begin{lemma}
\label{lem:tau-iso}
For $0\leq p\leq q$ cup-product with $\tau$ yields an isomorphism
$
\tau\colon
h^{p,q}
\rTo^{\iso}  
h^{p,q+1}. 
$
\end{lemma}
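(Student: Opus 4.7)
The plan is to deduce the lemma directly from Voevodsky's solution of the Milnor conjecture on Galois cohomology \cite{Voevodsky:Z/2}; the key input is the norm residue isomorphism, which identifies mod-$2$ motivic cohomology with mod-$2$ Galois cohomology in the relevant range of bidegrees, and under this identification the claim becomes essentially tautological.

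First I would recall the norm residue theorem: for any field $F$ with $\Char(F)\neq 2$ and integers $0\leq p\leq q$, there is a natural isomorphism $h^{p,q}(F)\iso H^{p}(F;\ZZ/2)$. The hypothesis $\Char(F)\neq 2$ identifies the \'etale sheaf $\mu_{2}$ with the constant sheaf $\ZZ/2$, so that $\mu_{2}^{\tensor q}\iso\ZZ/2$ canonically, which is why the right hand side is naturally independent of $q$ throughout the range $p\leq q$.

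Next I would identify the multiplier $\tau\in h^{0,1}(F)$ under this norm residue isomorphism with the generator $1\in H^{0}(F;\ZZ/2)\iso\ZZ/2$; this matches the description of $\tau$ recalled in the statement, namely as the class represented by $-1\in F$ via the Kummer identification $\mu_{2}(F)\iso\ZZ/2$. Because the norm residue map is multiplicative, cup product with $\tau$ on $h^{\ast,\ast}(F)$ translates into cup product with the unit on $H^{\ast}(F;\ZZ/2)$, i.e., the identity.

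Assembling these observations yields a commutative square whose vertical arrows are the norm residue isomorphisms and whose bottom horizontal arrow is the identity, forcing the top horizontal $\tau\colon h^{p,q}\to h^{p,q+1}$ to be an isomorphism for $0\leq p\leq q$. The only nontrivial ingredient is Voevodsky's deep theorem; once invoked, the verification is a formal identification and there is no genuine obstacle to overcome.
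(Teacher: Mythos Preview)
Your argument is correct and matches the paper's own treatment: the paper simply records the lemma as ``an immediate consequence of Voevodsky's proof of Milnor's conjecture on Galois cohomology \cite{Voevodsky:Z/2}'' without spelling out the details you have supplied. Your unpacking via the norm residue isomorphism and the identification of $\tau$ with the unit in \'etale cohomology is precisely the intended reasoning.
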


\begin{corollary}
\label{cor:sq-weight-1}
If $a\in h^{p,q}$ where $0\leq p\leq q$, 
write $a=\tau^{q-p}c$ where $c\in h^{p,p}$ and let $n=q-p$.
The Steenrod squares of weight $\leq 1$ act on the mod-$2$ motivic cohomology ring $h^{\ast,\ast}$ by 
\begin{align*}
\Sq^{1}(\tau^nc) 
= &  
\begin{cases} 
\rho\tau^{n-1}c    & n\equiv 1\bmod 2 \\
0                  & n\equiv 0\bmod 2
\end{cases} 
& 
\Sq^{2}(\tau^nc) 
= & 
\begin{cases} 
\rho^{2}\tau^{n-1}c    & n\equiv 2,3\bmod 4\\
0                     & n\equiv 0,1\bmod 4
\end{cases} 
\\
\Sq^{2}\Sq^{1}(\tau^nc) 
& = 
\begin{cases} 
\rho^{3}\tau^{n-2}c    & n\equiv 3\bmod 4\\
0                     & n\equiv 0,1,2\bmod 4 
\end{cases}
&
\Sq^{3}\Sq^{1}(\tau^nc) 
= &   
\begin{cases} 
\rho^{4}\tau^{n-3}c    & n\equiv 3\bmod 4\\
0                     & n\equiv 0,1,2\bmod 4.
\end{cases} 
\end{align*}
\end{corollary}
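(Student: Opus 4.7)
The plan is to strip off the $\tau$-power using Lemma~\ref{lem:tau-iso} and reduce every computation to the action of Steenrod squares on pure powers of $\tau$. Write $a=\tau^{n}c$ with $c\in h^{p,p}$ and $n=q-p\geq 0$.

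First I would establish the auxiliary vanishing $\Sq^{i}(c)=0$ for $c\in h^{p,p}$ and $i\geq 1$. Indeed, $\Sq^{2j}$ maps $h^{p,p}$ to $h^{p+2j,p+j}$ and $\Sq^{2j+1}$ maps it to $h^{p+2j+1,p+j}$; in both cases the first index strictly exceeds the second, so the target vanishes by the standard vanishing $h^{a,b}(F)=0$ for $a>b$ over a field. The same argument applied to $\rho c\in h^{p+1,p+1}$ and to $\rho\in h^{1,1}$ gives $\Sq^{i}(\rho c)=0$ and $\Sq^{i}(\rho)=0$ for $i\geq 1$. These vanishings kill every cross-term in the motivic Cartan formula \cite{Voevodsky:Z/2}, so for $i=1,2,3$ one obtains $\Sq^{i}(\tau^{n}c)=\Sq^{i}(\tau^{n})\cdot c$, and similarly $\Sq^{i}(\tau^{n-1}\rho c)=\Sq^{i}(\tau^{n-1})\cdot\rho c$.

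Next I would compute $\Sq^{i}(\tau^{n})$ for $i=1,2,3$ by induction on $n$, invoking the basic identities $\Sq^{1}(\tau)=\rho$ and $\Sq^{i}(\tau)=0$ for $i\geq 2$ from Voevodsky's description of the motivic Steenrod algebra. Since the Bockstein is a derivation, $\Sq^{1}(\tau^{n})=n\rho\tau^{n-1}$, which mod $2$ gives the $\Sq^{1}$-case of the corollary. Plugging the factorization $\tau^n=\tau\cdot\tau^{n-1}$ into the motivic Cartan formula for $\Sq^{2}$ yields, after cancellation, the recursion $\Sq^{2}(\tau^{n})=(n-1)\rho^{2}\tau^{n-1}+\tau\Sq^{2}(\tau^{n-1})$, and a finite mod-$4$ check on initial values reveals the claimed $4$-periodicity: $\Sq^{2}(\tau^{n})=\rho^{2}\tau^{n-1}$ iff $n\equiv 2,3\bmod 4$. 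The same Cartan-based induction, now tracking the additional $\rho\Sq^{1}\Sq^{1}$ term that appears in the Cartan expansion of the odd square $\Sq^{3}$, yields $\Sq^{3}(\tau^{n})=\rho^{3}\tau^{n-2}$ iff $n\equiv 2\bmod 4$.

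For the composites $\Sq^{2}\Sq^{1}$ and $\Sq^{3}\Sq^{1}$, first apply $\Sq^{1}$: when $n$ is even the outcome is already zero, so both composites vanish; when $n$ is odd, $\Sq^{1}(\tau^{n}c)=\tau^{n-1}(\rho c)$ with $\rho c\in h^{p+1,p+1}$, so the vanishing reduction from the first step applies again. Substituting the values of $\Sq^{2}$ and $\Sq^{3}$ on the even power $\tau^{n-1}$ singles out the residue $n\equiv 3\bmod 4$ in both cases and produces $\rho^{3}\tau^{n-2}c$ and $\rho^{4}\tau^{n-3}c$ respectively, matching the corollary. The main obstacle in this plan is the careful bookkeeping with the $\tau$- and $\rho$-twisted motivic Cartan formulas for $\Sq^{2}$ and $\Sq^{3}$; once those identities and the basic values $\Sq^{i}(\tau)$ are in hand, everything else is a routine mod-$4$ check.
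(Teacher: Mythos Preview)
Your proof is correct and follows essentially the same approach as the paper's: use the vanishing $\Sq^i(c)=0$ for $c\in h^{p,p}$ (by degree reasons) together with the motivic Cartan formula to reduce to computing $\Sq^i(\tau^n)$, then handle those by induction from the basic values $\Sq^1(\tau)=\rho$, $\Sq^2(\tau)=0$. The paper's proof is terser but invokes exactly the same ingredients (Lemma~\ref{lem:tau-iso}, the Cartan formula, and the values of $\Sq^i(\tau^n)$ for $i=1,2$); the only cosmetic difference is that for the $\Sq^3\Sq^1$-case the paper implicitly uses the Adem relation $\Sq^3=\Sq^1\Sq^2$ rather than a separate Cartan induction for $\Sq^3$, which would spare you tracking the extra $\rho$-twisted cross-term.
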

\begin{proof}
This follows from Lemma \ref{lem:tau-iso}, 
the computation of $\Sq^i(\tau^n)$ for $i\in\{1,2\}$ and the Cartan formula \cite[Proposition 9.6]{Voevodsky:Steenrod}. 
\end{proof}

We consider an element $\phi\in h^{m,n}$ as a stable motivic cohomology operation of bidegree $(m,n)$ with the same name via multiplication with $\phi$ on the left. 
Only elements of bidegrees $(0,1)$, $(1,1)$, and $(1,2)$ will be relevant here.
The Adem relations in weight less than or equal to 2 are given by
\[
\Sq^1\Sq^1=0,
\;
\Sq^{1}\tau=\tau\Sq^{1}+\rho,
\;
\Sq^{1}\rho=\rho\Sq^{1},
\;
\Sq^{1}\Sq^{2}=\Sq^{3},
\;
\Sq^1\Sq^3=0,
\;
\Sq^{2}\tau=\tau\Sq^{2}+\tau\rho\Sq^{1}, 
\]
\[
\Sq^{2}\rho=\rho\Sq^{2}, 
\;
\Sq^{2}\Sq^{2}=\tau\Sq^{3}\Sq^{1},
\;
\Sq^{2}\Sq^{3}=\Sq^{5}+\Sq^{4}\Sq^{1},
\;
\Sq^{3}\Sq^{2}=\rho\Sq^{3}\Sq^{1},
\;
\Sq^{3}\Sq^{3}=\Sq^{5}\Sq^{1}.
\]

This concludes the prerequisites for our proof of the following result.
\begin{theorem}
\label{thm:e2-witt}
The 0th slice spectral sequence for $\KT$ collapses at its $E^{2}$-page, 
and 
\begin{equation*}
E^{\infty}_{p,q}(\KT)
\cong
\begin{cases}
h^{q,q} 
&
p\equiv 0\bmod 4 \\
0
&
\mathrm{otherwise.} \\
\end{cases}
\end{equation*}
\end{theorem}
\begin{proof}
The $E^{1}$-page takes the form
\[ 
E^{1}_{p,q} 
= 
\pi_{p,0}\s_{q}(\KT) 
= 
\bigdirectsum_{i\in \ZZ} h^{2i+(q-p),q} 
= 
\begin{cases}
\bigdirectsum_{j=0}^{\lfloor\frac{q}{2}\rfloor}h^{2j,q} & q\equiv p\bmod 2 \\
\bigdirectsum_{j=0}^{\lfloor\frac{q-1}{2}\rfloor}h^{2j+1,q} & q\not\equiv p \bmod 2.
\end{cases}
\]
The group $h^{2i+(q-p),q}$ in the sum 
$\bigdirectsum_{i\in \ZZ} h^{2i+(q-p),q}$ 
arises from the $2i$th summand of $\s_{q}(\KT)$.
The vanishing of $h^{p,q}$ for $p<0$ and $p>q$ shows the sum is finite.
Hence for every element $a\in E^1_{p,q}$ there exists a unique collection
of elements
$\{a_j\in h^{j,q}\}$ such that
\[ 
a= 
\begin{cases} 
(a_{q},a_{q-2},\dotsc,a_0)\in E^{1}_{p,q} & q\equiv 0 \equiv p \bmod 2\\ 
(a_{q-1},a_{q-3},\dotsc,a_0)\in E^{1}_{p,q} & q\equiv 1 \equiv p \bmod 2 \\
(a_{q},a_{q-2},\dotsc,a_1)\in E^{1}_{p,q} & q\equiv 1 \not\equiv p \bmod 2 \\
(a_{q-1},a_{q-3},\dotsc,a_1)\in E^{1}_{p,q} & q\equiv 0 \not\equiv p \bmod 2. 
\end{cases}
\]
 By inspection of (\ref{equation:d1KTdifferentials}) the components of $d_{1}^{\KT}$ are given by
\begin{equation}
\label{eq:d1kt} 
d_{1}^{\KT}(a)_{j} =
\begin{cases} 
\Sq^{3}\Sq^{1}a_{j-4}+\Sq^{2}a_{j-2}+\rho\Sq^{1}a_{j-2} & j\equiv q-p\bmod 4 \\ 
\Sq^{3}\Sq^{1}a_{j-4}+\Sq^{2}a_{j-2}+\tau a_{j} & j\equiv q-p+2\bmod 4.
\end{cases} 
\end{equation}
Here $j$ is the dimension index of the target group. 
The formula can be read off from the parity of the dimension index of the source group. 
Note that $\rho \Sq^1$, $\Sq^2$ and $\tau$ shift the dimension by $2$, $2$, and $0$, 
respectively. 
We compute the $E^{2}$-page by repeatedly using the Steenrod square computations and Adem relations given in the beginning of Section~\ref{section:milnorconjecture}.

\begin{description}
\item[\underline{$p\equiv 0\bmod 4$}]
For $a=(a_q,a_{q-2},\dotsc)$, 
Corollary~\ref{cor:sq-weight-1} implies 
\[
d_{1}^{\KT}(a)_j
= 
\begin{cases}
\Sq^{2}a_{j-2} & j\equiv q \bmod 4 \\
\tau a_{j}    & j\equiv q+2 \bmod 4. 
\end{cases}
\]
If $d_{1}^{\KT}(a)=0$ the injectivity part of Lemma~\ref{lem:tau-iso} implies $0=a_{q-2}=a_{q-6}=\dotsm$,
so that 
\[ 
\mathrm{ker}(d_{1}^{\KT}) 
= 
h^{q,q} \directsum h^{q-4,q}\directsum \dotsm.
\]
We may assume $q>0$.
For $b\in E^1_{p+1,q-1}$ the entering differential is given by
\[ 
d_{1}^{\KT}(b)_{j} 
= 
\begin{cases} 
\Sq^{3}\Sq^{1}b_{j-4}+\Sq^{2}b_{j-2}+\tau b_{j} & j\equiv q-p\bmod 4 \\
\Sq^{3}\Sq^{1}b_{j-4}+\Sq^{2}b_{j-2}+\rho\Sq^{1}b_{j-2}   & j\equiv q-p+2\bmod 4. 
\end{cases} 
\]
Corollary~\ref{cor:sq-weight-1} simplifies this formula to
\[ 
d_{1}^{\KT}(b)_{j} 
= 
\begin{cases} 
\Sq^{3}\Sq^{1}b_{j-4}+\tau b_{j} & j\equiv q-p\bmod 4 \\
0   & j\equiv q-p+2\bmod 4. 
\end{cases} 
\]
For example, if $p\equiv 0 \bmod 4$, then $j\equiv q-p \bmod 4$ implies
$j\equiv q \bmod 4$. Thus for $b_{j-2}\in h^{j-2,q-1}$ one has
$q-1-(j-2) \equiv 1\bmod 4$, whence $\Sq^2 b_{j-2} = 0$.
It follows that $E^2_{p,q}$ is the homology of the complex
\[ 
h^{q-4,q-1}\directsum h^{q-8,q-1}\directsum \dotsm
\rTo^{\alpha}  
h^{q,q}\directsum h^{q-4,q} \directsum \dotsm \rTo 0, 
\]
where $\alpha(b_{q-4},b_{q-8},\dotsc,b_m)=(\Sq^{3}\Sq^{1}b_{q-4},\Sq^{3}\Sq^{1}b_{q-8}+\tau b_{q-4},\dotsc,\tau b_m)$.
Here $m\equiv q\bmod 4$ and $0\leq m\leq 3$.
Lemma~\ref{lem:tau-iso} implies $\alpha$ is split injective by mapping $(a_{q},a_{q-4},\dotsc,a_m)$ to
\[ 
\bigl(\tau^{-1}a_{q-4}+\phi(a_{q-4}+\tau\phi(a_{q-8}+\dotsm+\tau\phi(a_{m}))),\dotsc,\tau^{-1}a_{m+4}+\phi(a_{m}),\tau^{-1}a_{m}\bigr), 
\]
where $\phi$ is the composite map
\[ 
h^{p,q}
\rTo^{\tau^{-1}} 
h^{p,q-1} 
\rTo^{\Sq^{3}\Sq^{1}}
h^{p+4,q}
\rTo^{\tau^{-1}} 
h^{p+4,q-1}. 
\] 
It follows that $E^{2}_{p,q}\iso h^{q,q}$ for all $p\equiv 0\bmod 4$.
        
\item[\underline{$p\equiv 1\bmod 4$}]
For $a=(a_{q-1},a_{q-3},\dotsc)$,
Corollary~\ref{cor:sq-weight-1} implies
\[ 
d_{1}^{\KT}(a)_j 
= 
\begin{cases} 
0                              & j\equiv q-1\bmod 4 \\
\Sq^{3}\Sq^{1}a_{j-4}+\tau a_{j} & j\equiv q+1\bmod 4. 
\end{cases} 
\]
If $d_{1}^{\KT}(a)=0$ then $0=a_{q-3}=a_{q-7} = \dotsm$ by applying inductively the injectivity statement in Lemma~\ref{lem:tau-iso}. 
Thus we have 
\[ 
\mathrm{ker}(d_{1}^{\KT}) 
= 
h^{q-1,q} \directsum h^{q-5,q}\directsum \dotsm.
\]
For $b\in E^1_{p+1,q-1}$ the entering differential is given by
\[ 
d_{1}^{\KT}(b)_{j} 
= 
\begin{cases} 
\Sq^{3}\Sq^{1}b_{j-4}+\Sq^{2}b_{j-2}+\tau b_{j} & j\equiv q-p\bmod 4 \\
\Sq^{3}\Sq^{1}b_{j-4}+\Sq^{2}b_{j-2}+\rho\Sq^{1}b_{j-2}   & j\equiv q-p+2\bmod 4. 
\end{cases} 
\]
Corollary~\ref{cor:sq-weight-1} simplifies this formula to
\[ 
d_{1}^{\KT}(b)_{j} 
= 
\begin{cases} 
\Sq^{2}b_{j-2}+\tau b_{j} & j\equiv q-p\bmod 4 \\
0                        & j\equiv q-p+2\bmod 4. 
\end{cases} 
\]
Thus $E^2_{p,q}$ is the homology of the complex
\[ 
h^{q-1,q-1}\directsum h^{q-3,q-1} \directsum \dotsm
\rTo^{d_1^{\KT}}  
h^{q-1,q}\directsum h^{q-5,q} \directsum \dotsm 
\rTo 
0. 
\]
Since the restriction of $d_{1}^{\KT}$ to $h^{q-1,q-1}\directsum h^{q-5,q-1}\directsum \dotsm$ is surjective by Lemma \ref{lem:tau-iso},
$E_{p,q}^{2}=0$.
        
\item[\underline{$p\equiv 2\bmod 4$}]
For $a=(a_{q},a_{q-2},\dotsc)$, 
Corollary~\ref{cor:sq-weight-1} implies
\[ 
d_{1}^{\KT}(a)_{j} 
= 
\begin{cases} 
0                        & j\equiv q-2\bmod 4 \\
\Sq^{2}a_{j-2}+\tau a_{j} & j\equiv q\bmod 4. 
\end{cases} 
\]
Hence the subgroup $\mathrm{ker}(d_{1}^{\KT})$
can be identified with
\[
\{ 
(a_q,a_{q-2},\dotsc)\in h^{q,q}\directsum h^{q-2,q}\directsum \dotsm 
\colon 
\tau a_{j}
=
\Sq^2 a_{j-2} \mathrm{\ for\ all\ } j\equiv q \bmod 4, 0\leq j \leq q\}.
\]
For $b\in E^1_{p+1,q-1}$ the entering differential is given by
\[ 
d_{1}^{\KT}(b)_{j} 
= 
\begin{cases} 
\Sq^{3}\Sq^{1}b_{j-4}+\Sq^{2}b_{j-2}+\rho\Sq^{1}b_{j-2}   & j\equiv q \bmod 4 \\
\Sq^{3}\Sq^{1}b_{j-4}+\Sq^{2}b_{j-2}+\tau b_{j} & j\equiv q-2\bmod 4. 
\end{cases} 
\]
Corollary~\ref{cor:sq-weight-1} simplifies this formula to
\[ 
d_{1}^{\KT}(b)_{j} 
= 
\begin{cases} 
\Sq^{3}\Sq^{1}b_{j-4}+\rho\Sq^{1}b_{j-2}  & j\equiv q \bmod 4  \\
\Sq^{2}b_{j-2}+\tau b_{j}                & j\equiv q-2\bmod 4. 
\end{cases} 
\]
If $a\in \mathrm{ker}(d_{1}^{\KT})$, 
Lemma \ref{lem:tau-iso} shows there exists elements $b_{j,q-1}\in h^{j,q-1}$ for all $0\leq j<q$ where $j\equiv q-2 \bmod 4$ and
$\tau b_{j,q-1} =a_j$. 
For these indices $j$, 
the Adem relation $\Sq^2 \tau = \tau \Sq^2 + \tau \rho \Sq^1$ 
and $d_{1}^{\KT}(a)=0$ imply
\[ 
\tau a_{j+2} 
= 
\Sq^{2} a_{j} 
= 
\Sq^{2}\tau b_{j} 
=
\tau \Sq^{2}b_{j}+\tau \rho \Sq^{1} b_{j} =\tau \rho \Sq^{1}b_{j}.
\]
Thus $\rho \Sq^{1}b_{j}=a_{j+2}$ by Lemma~\ref{lem:tau-iso}. 
It follows that 
\[ 
d_{1}^{\KT}(b_{q-2},0,b_{q-6},\dotsc) 
= 
(\rho\Sq^{1}b_{q-2},\tau b_{q-2},\rho\Sq^{1} b_{q-6},\tau b_{q-6}) 
= (a_{q},a_{q-2},a_{q-4},a_{q-6},\dotsc).
\]
This shows that $E_{p,q}^{2}$ is trivial.

\item[\underline{$p\equiv 3\bmod 4$}]
For $a=(a_{q-1},a_{q-3},\dotsc)$ the exiting differential simplifies to 
\[ 
d_{1}^{\KT}(a)_{j} 
= 
\begin{cases} 
\Sq^{3}\Sq^{1}a_{j-4}+\rho\Sq^{1}a_{j-2}   & j\equiv q-3\bmod 4 \\
\Sq^{2}a_{j-2}+\tau a_{j}                 & j\equiv q-1\bmod 4 
\end{cases} 
\]
by Corollary~\ref{cor:sq-weight-1}. 
Applying $\Sq^2$ to $\Sq^2 a_{j-2}+\tau a_{j}$ yields
\[ 
\Sq^2 (\Sq^2 a_{j-2}+\tau a_{j})
=
\tau \Sq^3\Sq^1 a_{j-2}+\tau \Sq^2 a_{j}+\tau\rho\Sq^{1}a_{j}. 
\]
Thus $\mathrm{ker}(d_{1}^{\KT})$ is comprised of tuples $(a_{q-1},a_{q-3},\dotsc)\in h^{q-1,q}\directsum h^{q-3,q}\directsum \dotsm$
for which $\tau a_{j}=\Sq^2 a_{j-2}$ whenever $j\equiv q -1 \bmod 4$, $0\leq j<q$.
For $b\in E^1_{p+1,q-1}$ the entering differential simplifies to 
\[ 
d_{1}^{\KT}(b)_{j} 
= 
\begin{cases} 
\tau b_{j}      & j\equiv q-3\bmod 4 \\
\Sq^{2}b_{j-2}   & j\equiv q-1\bmod 4. 
\end{cases} 
\]
If $a\in \mathrm{ker}(d_{1}^{\KT})$, 
Lemma~\ref{lem:tau-iso} shows there exist elements $b_{j,q-1}\in h^{j,q-1}$ for all $0\leq j<q$ where $j\equiv q-3 \bmod 4$ and 
$\tau b_{j,q-1}=a_j$. 
For these $j$, 
$d_{1}^{\KT}(a)=0$ implies
\[ 
\tau a_{j+2} 
= 
\Sq^{2} a_{j} 
= 
\Sq^{2}\tau b_{j} 
=\tau \Sq^{2}b_{j}+\tau \rho \Sq^{1} b_{j} =\tau \Sq^{2}b_{j}.
\]
Hence $\Sq^{2}b_{j} = a_{j+2}$ by Lemma~\ref{lem:tau-iso}. 
It follows that 
\[ 
d_{1}^{\KT}(b_{q-3},0,b_{q-7},\dotsc) 
= 
(\Sq^{2}b_{q-3},\tau b_{q-3},\Sq^{2} b_{q-7},\tau b_{q-7}) 
= 
(a_{q-1},a_{q-3},a_{q-5},a_{q-7},\dotsc).
\]
This shows that $E_{p,q}^{2}$ is trivial.
\end{description}
An inspection of the $E^{2}$-page shows that the 0th slice spectral sequence for $\KT$ collapses with $E^{2}=E^{\infty}$-page: 
\begin{center}
\begin{tikzpicture}[scale=0.8,font=\scriptsize,line width=1pt]
\draw[help lines] (-.5,0) grid (9.5,5.5);
\foreach \i in {0,...,5} {\node[label=left:$\i$] at (-.5,\i) {};}
\foreach \i in {0,...,9} {\node[label=below:$\i$] at (\i,-.2) {};}
\foreach \i in {0,...,5} 
     {\draw[fill] (0,\i) circle (1pt) node[above right=-1pt] {$h^{\i,\i}$};}
\foreach \i in {0,...,5} 
     {\draw[fill] (4,\i) circle (1pt) node[above right=-1pt] {$h^{\i,\i}$};}
\foreach \i in {0,...,5} 
     {\draw[fill] (8,\i) circle (1pt) node[above right=-1pt] {$h^{\i,\i}$};}
\end{tikzpicture}
\end{center}
\end{proof}

Let $I(F)$ denote the fundamental ideal of even dimensional quadratic forms in the Witt ring $W(F)$. 
To conclude our proof of Milnor's conjecture on quadratic forms, 
it remains to identify the slice filtration on the Witt ring with the filtration given by the fundamental ideal.
\begin{lemma}\label{lem:f1-ideal}
The slice filtration of $\KT$ induces a commutative diagram:
\begin{diagram}
\pi_{0,0}\f_{1}(\KT) & \rInto & \pi_{0,0}\f_{0}(\KT) \\
\dTo^{\iso} & & \dTo_\iso \\
I(F) & \rInto & W(F)
\end{diagram}
\end{lemma}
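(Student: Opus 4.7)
The plan is to identify $\pi_{0,0}\f_{0}(\KT)$ with $W(F)$ and its sub-image $\pi_{0,0}\f_{1}(\KT)$ with the fundamental ideal $I(F)$. Both vertical maps will come from the long exact sequences attached to the slice triangles $\f_{q+1}(\KT)\rTo\f_{q}(\KT)\rTo\s_{q}(\KT)$, combined with the explicit decomposition $\s_{q}(\KT)\cong\bigvee_{i\in\ZZ}\Sigma^{2i+q,q}\MZZ/2$ from (\ref{equation:wittheoryslices}). Commutativity of the square is automatic from naturality.

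For the right-hand isomorphism I start from the standard identification $\pi_{0,0}(\KT)\cong W(F)$. The slice decomposition yields $\pi_{p,0}\s_{q}(\KT)=\bigoplus_{i\in\ZZ}h^{2i+q-p,q}$, which vanishes for every $p$ whenever $q<0$ since $h^{a,b}=0$ unless $0\leq a\leq b$. The long exact sequences then force $\pi_{0,0}\f_{q+1}(\KT)\rTo^{\iso}\pi_{0,0}\f_{q}(\KT)$ for all $q\leq-1$, and exhaustiveness of the slice filtration gives $\pi_{0,0}\f_{0}(\KT)\cong W(F)$. At $q=0$ the same vanishing shows $\pi_{1,0}\s_{0}(\KT)=\bigoplus_{i}h^{2i-1,0}=0$, so the map $\pi_{0,0}\f_{1}(\KT)\hookrightarrow\pi_{0,0}\f_{0}(\KT)$ is injective and its cokernel is $E^{\infty}_{0,0}(\KT)=h^{0,0}=\ZZ/2$ by Theorem~\ref{thm:e2-witt}.

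It remains to recognise the resulting surjection $W(F)\twoheadrightarrow\ZZ/2$ as the rank-modulo-two homomorphism, whose kernel is by definition $I(F)$. The plan is to exploit the ring structure: since $\f_{0}$ and $\s_{0}$ are lax monoidal (cf.~\cite{Pelaez}, \cite{GRSO}), the natural projection $\f_{0}(\KT)\rTo\s_{0}(\KT)$ is a map of motivic ring spectra, and on $\pi_{0,0}$ it induces a ring homomorphism $W(F)\rTo\pi_{0,0}\s_{0}(\KT)=h^{0,0}=\ZZ/2$. Each generator $\langle a\rangle\in W(F)$ with $a\in F^{\times}$ satisfies $\langle a\rangle^{2}=\langle a^{2}\rangle=\langle 1\rangle=1$, so its image is a unit of order dividing two in $\ZZ/2$ and must equal $1$; additivity then forces a general form $\sum_{i}\langle a_{i}\rangle$ to be sent to its rank modulo two. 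The main obstacle I anticipate is precisely this multiplicativity step, namely confirming that the lax monoidal structure on $\s_{0}$ yields a genuine ring homomorphism at the level of $\pi_{0,0}$; once this is in hand, the ring-theoretic calculation above identifies the kernel of the surjection with $I(F)$.
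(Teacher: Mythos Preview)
Your proposal is correct and follows essentially the same strategy as the paper: use the multiplicativity of the slice filtration (\cite{GRSO}, \cite{Pelaez}) to see that $\pi_{0,0}\f_0(\KT)\cong W(F)\to\pi_{0,0}\s_0(\KT)=h^{0,0}=\FF_2$ is a ring homomorphism, identify it as the rank-mod-$2$ map, and conclude that its kernel is $I(F)$; the injectivity of $\pi_{0,0}\f_1(\KT)\to\pi_{0,0}\f_0(\KT)$ via $\pi_{1,0}\s_0(\KT)=0$ is exactly the paper's argument as well.

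The one genuine difference is in how the ring map $W(F)\to\FF_2$ is identified. The paper invokes naturality under separable base change (Corollary~\ref{cor:slice-ess-smooth-base-change}) and compares with an algebraic closure, where $W(\overline{F})=\FF_2$ forces the map to be the rank. Your argument is more elementary: since each $\langle a\rangle$ is a unit in $W(F)$, it must map to the unique unit $1\in\FF_2$, and additivity gives the rank mod $2$. This avoids the base-change machinery entirely and is arguably cleaner for this lemma; the paper's approach, on the other hand, ties into infrastructure used elsewhere in the paper. Your appeal to Theorem~\ref{thm:e2-witt} for the cokernel is harmless but unnecessary: once the map is a ring homomorphism to $\FF_2$ it is automatically surjective, so the cokernel is $\FF_2$ without any spectral-sequence input. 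Your stated ``main obstacle'' (that $\s_0$ is lax monoidal and hence induces a ring map on $\pi_{0,0}$) is precisely what the cited references establish, and the paper relies on the same fact.
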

\begin{proof}
By \cite[Corollary 5.18]{GRSO} the canonical map $ \f_0(\KT)\rTo \s_0(\KT)$ induces a ring homomorphism 
\[ 
W(F) 
\iso 
\pi_{0,0}\f_{0}(\KT) 
\rTo 
\pi_{0,0}\s_{0}(\KT) 
\iso 
h^{0,0}
\iso
\FF_{2}. 
\]
It is natural with respect to separable field extensions according to Corollary \ref{cor:slice-ess-smooth-base-change}.
Comparing with an algebraic closure of $F$ shows this ring homomorphism is induced by sending a quadratic form to its rank. 
Since the group $\pi_{1,0}\s_{0}(\KT)=h^{1,0}$ is trivial,  
the map 
\[ 
\pi_{0,0}\f_{1}(\KT) 
\rTo 
\pi_{0,0}\f_{0}(\KT)
\]
is injective, 
which proves the result.
\end{proof}

\begin{corollary}\label{cor:f1-ideal}
The identification $\pi_{0,0}\KT\iso W(F)$ induces an inclusion $I(F)^{q}\subseteq\f_{q}\pi_{0,0}\KT$.
\end{corollary}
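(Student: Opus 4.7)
The plan is to prove the inclusion $I(F)^q \subseteq \f_q \pi_{0,0}\KT$ by induction on $q$, exploiting the ring spectrum structure on $\KT$ and the multiplicativity of the slice filtration.

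The base case $q=0$ is tautological (both sides equal $W(F)$), and the case $q=1$ is precisely the content of Lemma~\ref{lem:f1-ideal}, which identifies the image of $\pi_{0,0}\f_1(\KT)$ in $W(F)$ with the fundamental ideal $I(F)$.

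For the inductive step, I would fix $q\geq 2$ and use that $I(F)^q = I(F)\cdot I(F)^{q-1}$ as an ideal of $W(F)$. Given $\alpha\in I(F)$ and $\beta\in I(F)^{q-1}$, the base case and the induction hypothesis provide lifts $\widetilde{\alpha}\in\pi_{0,0}\f_{1}(\KT)$ of $\alpha$ and $\widetilde{\beta}\in\pi_{0,0}\f_{q-1}(\KT)$ of $\beta$. The crucial ingredient is that, for the ring spectrum $\KT$, the slice filtration is multiplicative: the multiplication $\KT\smash\KT\rTo\KT$ restricts to pairings
\[
\f_{a}(\KT)\smash\f_{b}(\KT)\rTo\f_{a+b}(\KT)
\]
in $\SHH$, since $S^{2a,a}\smash\SHH^{\eff}\smash S^{2b,b}\smash\SHH^{\eff}$ is contained in $S^{2(a+b),a+b}\smash\SHH^{\eff}$ (recall $\SHH^{\eff}$ is closed under smash product). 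Applying this pairing to $\widetilde{\alpha}\smash\widetilde{\beta}$ yields an element of $\pi_{0,0}\f_{q}(\KT)$ whose image in $\pi_{0,0}\KT\cong W(F)$ equals $\alpha\cdot\beta$, by naturality of the forgetful maps $\f_{a}(\KT)\rTo\KT$ with respect to the multiplication. Hence every generator of $I(F)^q$ of the form $\alpha\beta$ lies in $\f_{q}\pi_{0,0}\KT$, and since $\f_{q}\pi_{0,0}\KT$ is a subgroup, the entire ideal $I(F)^q$ is contained in it.

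The only nontrivial point is confirming the multiplicativity of the slice filtration on the ring spectrum $\KT$, which is standard and follows for instance from the constructions in \cite{Pelaez} and \cite{GRSO} already cited in the preceding proof; no further computation is required.
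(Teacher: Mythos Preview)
Your proof is correct and follows essentially the same approach as the paper's own proof. The paper's argument is simply a terser version of yours: it invokes the multiplicative structure of the slice filtration (citing \cite{GRSO}, \cite{Pelaez}) together with Lemma~\ref{lem:f1-ideal}, leaving the induction on $q$ implicit; you have merely spelled out that induction explicitly.
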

\begin{proof}
Since $\KT$ is a ring spectrum the claim follows from the multiplicative structure of the slice filtration \cite[Theorem 5.15]{GRSO}, 
\cite[Theorem 3.6.9]{Pelaez}, 
and Lemma \ref{lem:f1-ideal}.
\end{proof}

Any rational point $u\in \AA^{1}\minus \{0\}(F)$ defines a map of motivic spectra $[u]\colon \One\rTo S^{1,1}$. 
We are interested in the effect of the map $[u]$ on motivic cohomology and $\KT$.

\begin{lemma}
\label{lem:unit-H}
The map
\begin{diagram}
H^{0,0}
= 
[\One,\MZZ]
= 
[\One,\One\smash\MZZ] 
&
\rTo^{([u]\smash\MZZ)_{\ast}}
& 
[\One,\Sigma^{1,1}\MZZ]
= 
H^{1,1}
\end{diagram}
sends $1$ to $u\in \AA^{1}\minus\{0\}(F)= F^{\times}\iso H^{1,1}$.
\end{lemma}
\begin{proof}
Let $\ZZ(1)$ denote the Tate object in the derived category of motives $\DM_F$ over $F$.
The assertion follows from the canonically induced diagram:
\begin{diagram}
\Hom_{\Sm_F}(F,\AA^{1}\minus \{0\}) & 
\rTo & 
[\One,\Sigma^{1,1} \MZZ] \\
\dTo & & 
\dTo_\iso \\
\Hom_{\DM_F}(\ZZ,\mathcal{O}^\units) & 
\lTo^\iso & 
\Hom_{\DM_F}(\ZZ,\ZZ(1)[1])
\end{diagram}
The lower horizontal isomorphism follows from \cite[Theorem 4.1]{MVW}.
\end{proof}

\begin{corollary}\label{cor:unit-h}
Suppose $u_{1},\dotsc,u_{q}\in \AA^{1}\minus \{0\}(F)$ are rational points. 
The map $H^{0,0} \rTo H^{q,q}$ induced by the smash product $[u_{1}]\smash \dotsm \smash [u_{q}]$ sends $1$ to  
$\{u_{1},\dotsc,u_{q}\}\in K^{M}_q\iso H^{q,q}$,
and likewise for $h^{0,0}\rTo h^{q,q}$.
\end{corollary}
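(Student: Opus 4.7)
The plan is to deduce this by induction on $q$ from Lemma~\ref{lem:unit-H}, leveraging the multiplicative structure on the motivic Eilenberg--MacLane spectrum $\MZZ$. The base case $q=1$ is precisely Lemma~\ref{lem:unit-H}. For the inductive step, I would observe that under the identifications $\One\smash\MZZ=\MZZ$ and $S^{q,q}\smash\MZZ=\Sigma^{q,q}\MZZ$, the map $[u_{1}]\smash\dotsm\smash[u_{q}]$ induces on $[\One,-\smash\MZZ]=H^{\star}$ precisely the iterated cup product of the classes $[u_{1}],\dotsc,[u_{q}]\in H^{1,1}$. Concretely, the composite
\[
\One=\One^{\smash q}
\rTo^{\;[u_{1}]\smash\dotsm\smash[u_{q}]\;}
S^{q,q}\smash\One^{\smash q}
\rTo
S^{q,q}\smash\MZZ^{\smash q}
\rTo
S^{q,q}\smash\MZZ
\]
(where the last arrow uses the multiplication on $\MZZ$) represents the image of $1\in H^{0,0}$, and by definition this is the cup product $[u_{1}]\cup\dotsm\cup[u_{q}]$.

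By Lemma~\ref{lem:unit-H} each factor $[u_{i}]$ corresponds to $u_{i}\in F^{\units}\iso H^{1,1}$. To finish, I invoke the Nesterenko--Suslin--Totaro identification of the diagonal motivic cohomology ring with Milnor $K$-theory, namely $\bigdirectsum_{q\geq 0}H^{q,q}\iso K^{M}_{\ast}(F)$, under which the cup product $u_{1}\cup\dotsm\cup u_{q}$ is sent to the symbol $\{u_{1},\dotsc,u_{q}\}$; this is the same input (\cite[Theorem~4.1]{MVW} and its consequences) already used in the proof of Lemma~\ref{lem:unit-H}. The mod-$2$ statement follows by applying the reduction $\MZZ\rTo\MZZ/2$ throughout, since this is a map of ring spectra and therefore preserves cup products.

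There is no genuine obstacle here; the only point requiring care is the bookkeeping that turns the smash product $[u_{1}]\smash\dotsm\smash[u_{q}]$ into the iterated cup product, i.e., the compatibility of the external pairing induced by $\smash$ with the internal pairing induced by the multiplication on $\MZZ$. Once this multiplicativity is recorded, the corollary reduces to Lemma~\ref{lem:unit-H} applied $q$ times together with the Nesterenko--Suslin--Totaro theorem.
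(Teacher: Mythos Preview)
Your proposal is correct and is precisely the argument the paper intends: the corollary is stated without proof in the paper, immediately following Lemma~\ref{lem:unit-H}, and your sketch --- reducing to the $q=1$ case via the ring structure on $\MZZ$ and invoking the Nesterenko--Suslin--Totaro isomorphism $\bigdirectsum_{q}H^{q,q}\iso K^{M}_{\ast}(F)$ --- is exactly how one fills in the omitted details.
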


\begin{lemma}\label{lem:unit-kt}
The composition $\One\rTo^{[u]} S^{1,1}\rTo^\eta \One$ induces multiplication by $\langle u \rangle -1$ on $\pi_{0,0}\One$, 
where $\langle u\rangle$ is the class of the rank one quadratic form in the Grothendieck-Witt ring defined by $u$.
\end{lemma}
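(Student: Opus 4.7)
I would invoke Morel's computation of the zero line of the motivic sphere and reduce the statement to a defining relation in Milnor-Witt $K$-theory.

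Morel's theorem provides a canonical isomorphism of graded rings $\pi_{-n,-n}\One\iso K^{MW}_n(F)$ for a perfect field $F$, under which composition of stable motivic maps corresponds to the Milnor-Witt product. In degree zero this specializes to $\pi_{0,0}\One\iso GW(F)$, sending $\langle u\rangle\in GW(F)$ to the self-map of $S^{1,1}$ obtained from multiplication by $u$ on $\AA^1\minus\{0\}$ (pointed at $1$). Under the same identifications, the Hopf map $\eta\colon S^{1,1}\rTo\One$ corresponds to the generator $\eta\in K^{MW}_{-1}(F)$, and the map $[u]\colon\One\rTo S^{1,1}$ corresponds to the Milnor-Witt symbol $[u]\in K^{MW}_1(F)$.

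With these identifications in place, the composition $\eta\circ[u]\colon\One\rTo S^{1,1}\rTo\One$ represents the Milnor-Witt product $\eta\cdot[u]\in K^{MW}_0(F) = GW(F)$. Morel's presentation of Milnor-Witt $K$-theory contains the defining relation $\langle u\rangle = 1 + \eta\cdot[u]$, so $\eta\cdot[u] = \langle u\rangle - 1$. Multiplication by the element of $\pi_{0,0}\One$ represented by the composition is therefore multiplication by $\langle u\rangle - 1$, which is the claim.

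\textbf{Main obstacle.} The argument relies on Morel's theorem, both for the ring isomorphism $\pi_{0,0}\One\iso GW(F)$ and for the explicit identification of the classes $\langle u\rangle$, $[u]$, and $\eta$ with their motivic-homotopy counterparts. A more self-contained route would analyze the multiplication-by-$u$ automorphism of $\AA^1\minus\{0\}$ together with its interaction with the attaching map $\AA^2\minus\{0\}\rTo\PP^1$ defining $\eta$, but that ultimately reduces to the same shear computation on $(\AA^1\minus\{0\})\smash(\AA^1\minus\{0\})$ encoded in Morel's presentation.
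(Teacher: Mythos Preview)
Your proposal is correct and takes essentially the same approach as the paper: the paper's proof is a one-line citation of \cite[Corollary 1.24]{Morel:2052}, and you have simply unpacked what that citation contains, namely Morel's identification of $\pi_{\ast,\ast}\One$ with Milnor--Witt $K$-theory together with the defining relation $\eta\cdot[u]=\langle u\rangle-1$.
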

\begin{proof}
This follows from \cite[Corollary 1.24]{Morel:2052}.
\end{proof}

The unit map for $\KT$ induces the canonical map from the Grothendieck-Witt ring to the Witt ring $\pi_{0,0}\One\rTo \pi_{0,0}\KT$.
By definition, 
multiplication by $\eta$ is an isomorphism on $\pi_{\ast,\ast}\KT$.
Thus the ``multiplication by $[u]$'' map on $\pi_{0,0}\KT$ is determined by its effect on the Grothendieck-Witt ring $\pi_{0,0}\One$. 
Recall that $\f_{q}\pi_{p,n}\E$ denotes the image of the canonical map $\pi_{p,n}\f_{q}(E)\rTo\pi_{p,n}\E$.

\begin{lemma}
\label{lem:ses-conv}
There is a canonically induced short exact sequence
\[ 
0
\rTo 
\f_{q+1}\pi_{0,0}\KT 
\rTo^{j_q} 
\f_{q}\pi_{0,0}\KT 
\rTo 
h^{q,q} 
\rTo 
0.\]
\end{lemma}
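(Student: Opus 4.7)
The plan is to read off the sequence from Theorem~\ref{thm:e2-witt}, which establishes $E^\infty_{0,q}(\KT) \iso h^{q,q}$ in the slice spectral sequence for $\KT$. Because the slice spectral sequence is a half-plane spectral sequence with only finitely many differentials entering each tri-degree, the filtration quotients of $\pi_{0,0}\KT$ are canonically identified with the corresponding $E^\infty$-terms, namely $\f_q\pi_{0,0}\KT/\f_{q+1}\pi_{0,0}\KT \iso E^\infty_{0,q}(\KT)$, which yields the desired short exact sequence.

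Concretely, I will construct the surjection $\f_q\pi_{0,0}\KT \rTo h^{q,q}$ by lifting $x \in \f_q\pi_{0,0}\KT$ to $\widetilde{x}\in\pi_{0,0}\f_q(\KT)$, mapping $\widetilde{x}$ to $\pi_{0,0}\s_q(\KT) = E^1_{0,q}(\KT)$ via the slice tower, and then projecting to $E^\infty_{0,q}(\KT) = h^{q,q}$. Exactness of the cofiber sequence $\f_{q+1}\KT \rTo \f_q\KT \rTo \s_q\KT$ places the image in the kernel of $d_1^{\KT}$, and the collapse at $E^2$ from Theorem~\ref{thm:e2-witt} guarantees that no higher differentials obstruct the projection. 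The ambiguity in the lift is absorbed into the image of $\pi_{0,0}\f_{q+1}(\KT) \rTo \pi_{0,0}\f_q(\KT)$, which maps to zero in $\pi_{0,0}\s_q(\KT)$, so the map descends to a well-defined homomorphism on $\f_q\pi_{0,0}\KT/\f_{q+1}\pi_{0,0}\KT$.

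For surjectivity, any class in $h^{q,q}$ is represented by an element of $\ker(d_1^{\KT}) \subseteq E^1_{0,q}(\KT)$, and the long exact sequence of $\f_{q+1}\KT \rTo \f_q\KT \rTo \s_q\KT$ lifts such a representative to $\pi_{0,0}\f_q(\KT)$, whose image in $\pi_{0,0}\KT$ lies in $\f_q\pi_{0,0}\KT$ by definition. The main obstacle is verifying that the kernel of the constructed map equals $\f_{q+1}\pi_{0,0}\KT$: if $x$ maps to zero in $h^{q,q}$, then the image of $\widetilde{x}$ in $E^1_{0,q}(\KT)$ lies in the image of the incoming $d_1$ from $(1, q-1)$, and a diagram chase through the exact couple defining the spectral sequence lets us modify the lift $\widetilde{x}$ so that it comes from $\pi_{0,0}\f_{q+1}(\KT)$, placing $x$ in $\f_{q+1}\pi_{0,0}\KT$. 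These diagram chases are routine once the $E^2$-collapse is in hand.
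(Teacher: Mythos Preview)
Your surjectivity argument has a real gap. You claim that an element of $\ker(d_1^{\KT}) \subseteq \pi_{0,0}\s_q(\KT)$ lifts to $\pi_{0,0}\f_q(\KT)$ via the long exact sequence of $\f_{q+1}\KT \rTo \f_q\KT \rTo \s_q\KT$. But that long exact sequence only lifts elements in the kernel of the connecting map $\partial\colon \pi_{0,0}\s_q(\KT) \rTo \pi_{-1,0}\f_{q+1}(\KT)$, whereas $d_1$ is the composite of $\partial$ with $\pi_{-1,0}\f_{q+1}(\KT) \rTo \pi_{-1,0}\s_{q+1}(\KT)$; being in $\ker(d_1)$ only says $\partial(y)$ lifts further up the tower, not that it vanishes. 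More conceptually, your opening claim that ``the filtration quotients of $\pi_{0,0}\KT$ are canonically identified with the corresponding $E^\infty$-terms'' is precisely the convergence statement at issue: one always has an injection $\alpha\colon \f_q\pi_{0,0}\KT/\f_{q+1}\pi_{0,0}\KT \hookrightarrow E^\infty_{0,q}$, but its cokernel is governed by $\bigcap_{i\geq 1}\f_{q+i}\pi_{-1,0}\f_q(\KT)$ (this is the exact sequence from \cite[Lemma~7.2]{Voevodsky:open} that the paper invokes), and nothing in your argument forces that intersection to vanish. Note too that convergence of the slice spectral sequence for $\KT$ is established in the paper \emph{after} this lemma (Theorem~\ref{thm:convergence-witt}), using the lemma together with Arason--Pfister as input, so assuming it here would be circular.

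The paper supplies the missing surjectivity by an external ingredient rather than a spectral-sequence chase. Using Corollary~\ref{cor:f1-ideal} and Milnor's surjection $\gamma\colon k^M_q \rTo I(F)^q/I(F)^{q+1}$, it produces a composite $k^M_q \rTo \f_q\pi_{0,0}\KT/\f_{q+1}\pi_{0,0}\KT \rTo^\alpha h^{q,q}$ and then identifies this composite with Suslin's isomorphism $k^M_q \iso h^{q,q}$ via Lemmas~\ref{lem:unit-H} and~\ref{lem:unit-kt}, which compute how the maps $[u]\colon \One\rTo S^{1,1}$ act on motivic cohomology and on $\KT$. That forces $\alpha$ to be onto and gives the short exact sequence.
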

\begin{proof} 
Theorem~\ref{thm:e2-witt} shows the exact sequence in the proof of \cite[Lemma 7.2]{Voevodsky:open} takes the form
\[ 
0 \rTo 
\f_{q}\pi_{0,0}\KT/\f_{q+1}\pi_{0,0}\KT \rTo^\alpha h^{q,q} 
\rTo 
\bigcap_{i\geq 1} \f_{q+i} \pi_{-1,0} \f_q(\KT) 
\rTo 
\bigcap_{i\geq 0} \f_{q+i} \pi_{-1,0}\KT \rTo 0.
\]
Moreover, 
$\pi_{-1,0}\KT$ is the trivial group.
Corollary~\ref{cor:f1-ideal} furnishes a map
\[ 
I(F)^q/I(F)^{q+1} 
\rTo^\beta 
\f_{q}\pi_{0,0}\KT/\f_{q+1}\pi_{0,0}\KT. 
\]
Combined with the canonical surjective map
$
k^{M}_q 
\rTo^\gamma  
I(F)^q/I(F)^{q+1} 
$
from the $q$-th mod-$2$ Milnor K-theory group defined in \cite{Milnor}, 
we obtain the composite map 
\begin{equation}
\label{equation:abg}
\alpha\circ\beta\circ\gamma 
\colon 
k^{M}_q
\rTo 
h^{q,q}.
\end{equation}
Lemmas \ref{lem:unit-H} and \ref{lem:unit-kt} show that (\ref{equation:abg}) coincides with Suslin's isomorphism between Milnor $K$-theory 
and the diagonal of motivic cohomology \cite[Lecture 5]{MVW}. 
In particular, 
$\alpha$ is surjective.
\end{proof}

The above shows that $\gamma$ is injective, 
which gives an alternate proof of the main result in \cite{Orlov-Vishik-Voevodsky}. 
\begin{theorem}
\label{thm:kmilnorwitt}
The canonical map $k^{M}_q \rTo I(F)^q/I(F)^{q+1}$ is an isomorphism for $q\geq 0$.
\end{theorem}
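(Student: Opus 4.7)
The plan is to read off the theorem directly from the factorization established in Lemma~\ref{lem:ses-conv}. Recall that we constructed
\[
k^{M}_{q}
\rTo^{\gamma}
I(F)^{q}/I(F)^{q+1}
\rTo^{\beta}
\f_{q}\pi_{0,0}\KT/\f_{q+1}\pi_{0,0}\KT
\rTo^{\alpha}
h^{q,q},
\]
and saw that the composite $\alpha\circ\beta\circ\gamma$ agrees with Suslin's isomorphism $k^{M}_{q} \iso h^{q,q}$ between mod-$2$ Milnor $K$-theory and the diagonal of mod-$2$ motivic cohomology.

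First I would observe that $\gamma$ is surjective from the very definition of the filtration of $W(F)$ by powers of the fundamental ideal: $I(F)^{q}$ is additively generated by $q$-fold Pfister forms $\langle\!\langle u_{1},\dots,u_{q}\rangle\!\rangle$, and $\gamma$ sends the Milnor symbol $\{u_{1},\dots,u_{q}\}$ to the class of this Pfister form modulo $I(F)^{q+1}$. Next, since $\alpha\circ\beta\circ\gamma$ is an isomorphism, it is in particular injective, and in any composition of maps the rightmost factor inherits injectivity from the composite, so $\gamma$ itself is injective. Combined with surjectivity, this shows $\gamma$ is an isomorphism, which is precisely the statement of the theorem.

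There is really no obstacle left to overcome at this stage: the theorem is essentially a one-line corollary of Lemma~\ref{lem:ses-conv}. The genuine content has already been expended in the collapse of the slice spectral sequence for $\KT$ (Theorem~\ref{thm:e2-witt}) and in the identification of $\alpha\circ\beta\circ\gamma$ with Suslin's isomorphism, which rests on Lemmas~\ref{lem:unit-H} and~\ref{lem:unit-kt} together with Corollary~\ref{cor:unit-h}. Once those are in hand, Milnor's conjecture on quadratic forms reduces to the observation that an injective composite has an injective rightmost factor.
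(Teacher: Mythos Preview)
Your proof is correct and follows precisely the paper's approach: the theorem is recorded immediately after Lemma~\ref{lem:ses-conv} with the one-line justification that the factorization $\alpha\circ\beta\circ\gamma$ being an isomorphism forces $\gamma$ to be injective, while surjectivity of $\gamma$ is already stated in the proof of that lemma. One terminological quibble: you write that the ``rightmost factor'' inherits injectivity; this is correct for the notation $\alpha\circ\beta\circ\gamma$, but it might be clearer to say the \emph{first} map in the chain $k^{M}_{q}\to I(F)^{q}/I(F)^{q+1}\to\cdots$ inherits injectivity from the composite, since ``rightmost'' depends on convention.
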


\begin{corollary}
\label{cor:sliceisideal}
The identification $\pi_{0,0}\KT\iso W(F)$ induces an equality $I(F)^{q} = \f_{q}\pi_{0,0}\KT$ for $q\geq 0$. 
\end{corollary}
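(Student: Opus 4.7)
The inclusion $I(F)^{q}\subseteq\f_{q}\pi_{0,0}\KT$ is Corollary~\ref{cor:f1-ideal}, so the task is the reverse inclusion. The plan has two steps: first extract an isomorphism on associated graded pieces from the preceding results, then apply the Arason-Pfister Hauptsatz to upgrade this to an equality of filtrations.

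For the first step, the proof of Lemma~\ref{lem:ses-conv} shows that the composite $\alpha\circ\beta\circ\gamma$ agrees with Suslin's isomorphism $k^{M}_{q}\iso h^{q,q}$; moreover $\alpha$ is an isomorphism by that same lemma, and $\gamma$ is an isomorphism by Theorem~\ref{thm:kmilnorwitt}. Two out of three forces the canonical map
\[
\beta\colon I(F)^{q}/I(F)^{q+1}\rTo\f_{q}\pi_{0,0}\KT/\f_{q+1}\pi_{0,0}\KT
\]
to be an isomorphism for every $q\geq 0$; in particular $\beta$ is injective on each level. For the second step one invokes the Arason-Pfister Hauptsatz, which gives $\bigcap_{n\geq 0}I(F)^{n}=0$ in $W(F)$. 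Fix $x\in\f_{q}\pi_{0,0}\KT$ with $x\neq 0$ and let $n$ be the largest integer with $x\in I(F)^{n}$. If $n<q$, then the class of $x$ in $I(F)^{n}/I(F)^{n+1}$ is nonzero by maximality, so its image in $\f_{n}\pi_{0,0}\KT/\f_{n+1}\pi_{0,0}\KT$ is nonzero by injectivity of $\beta$ at level $n$. But $x\in\f_{q}\pi_{0,0}\KT\subseteq\f_{n+1}\pi_{0,0}\KT$, a contradiction. Hence $n\geq q$ and $x\in I(F)^{q}$.

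The main subtlety is to avoid a naive telescoping. The surjectivity $I(F)^{q}+\f_{q+1}\pi_{0,0}\KT=\f_{q}\pi_{0,0}\KT$ combined with iteration produces an infinite series $\sum_{k}a_{k}$ with $a_{k}\in I(F)^{q+k}$ which has no intrinsic meaning in $W(F)$, since the $I$-adic topology is not complete in general, and the Hausdorff property furnished by convergence of the slice spectral sequence for $\KT$ does not by itself resolve this. Arason-Pfister sidesteps the issue by letting a single nonzero element pin down its own $I$-adic filtration index, so that injectivity of $\beta$ forces that index to agree with the slice index.
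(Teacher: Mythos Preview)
Your proof is correct, but the paper's route is shorter and avoids Arason--Pfister at this stage. Having established that $\beta$ is an isomorphism for every $q$, the paper simply runs a finite induction on $q$: the base case $I(F)=\f_{1}\pi_{0,0}\KT$ is Lemma~\ref{lem:f1-ideal}, and for the step one places the two short exact sequences side by side
\[
\begin{diagram}
0 & \rTo & I(F)^{q+1} & \rTo & I(F)^{q} & \rTo & I(F)^{q}/I(F)^{q+1} & \rTo & 0 \\
  &      & \dTo       &      & \dTo     &      & \dTo_{\beta}^{\iso} &      &   \\
0 & \rTo & \f_{q+1}\pi_{0,0}\KT & \rTo & \f_{q}\pi_{0,0}\KT & \rTo & h^{q,q} & \rTo & 0
\end{diagram}
\]
with the middle vertical an equality by induction and the right vertical an isomorphism; a diagram chase (or the five lemma) forces the left vertical to be an equality. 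Your concern about infinite telescoping does not apply to this argument: each inductive step is purely finite and identifies $I(F)^{q+1}$ with $\f_{q+1}\pi_{0,0}\KT$ on the nose, with no limit process. Your filtration-index argument via $\bigcap_{n}I(F)^{n}=0$ works too, and has the mild virtue of using only injectivity of $\beta$, but it imports the Arason--Pfister Hauptsatz into a statement the paper proves without it (Arason--Pfister is invoked only later, in Theorem~\ref{thm:convergence-witt}, to conclude convergence).
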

\begin{proof}
By the definition of $\beta$ there is a commutative diagram: 
\begin{diagram}
0 &
\rTo &
I(F)^{q+1} &
\rTo &
I(F)^{q} &
\rTo &
I(F)^q/I(F)^{q+1} &
\rTo &
0 \\
& & \dTo & & \dTo & & \dTo_\beta & &  \\
0 &
\rTo &
\f_{q+1}\pi_{0,0}\KT &
\rTo^{j_q} &
\f_{q}\pi_{0,0}\KT &
\rTo &
h^{q,q} &
\rTo &
0 
\end{diagram}
Note that $\beta$ is an isomorphism by Lemma \ref{lem:ses-conv}, Theorem \ref{thm:kmilnorwitt}, and the isomorphism (\ref{equation:abg}).
Thus the result follows by induction using the identification $I(F)=\f_1\pi_{0,0}\KT=\pi_{0,0}\f_1\KT$ in Lemma~\ref{lem:f1-ideal}. 
\end{proof}

This finishes our proof of Milnor's conjecture on quadratic forms.

\begin{theorem}
\label{thm:convergence-witt}
The image of $\pi_{4p+q,q}\f_{n}(\KT)$ in $\pi_{4p+q,q}(\KT)\iso W(F)$ coincides with $I^{n-q}(F)$, 
where $I(F)\subseteq W(F)$ is the fundamental ideal. 
Thus the slice spectral sequence for $\KT$ converges to the filtration of the Witt ring given by the fundamental ideal.
\end{theorem}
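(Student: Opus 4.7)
The strategy is to reduce the theorem to the already established case $(p,q)=(0,0)$ of Corollary~\ref{cor:sliceisideal} in two steps, using $\eta$-periodicity and $4$-fold Bott periodicity, and then to invoke the Arason-Pfister Hauptsatz for convergence.

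First, iterating the isomorphism $\f_n(\KT)\iso S^{1,1}\smash\f_{n-1}(\KT)$ of Lemma~\ref{lemma:shift-slices}, which applies to $\KT$ thanks to the $\eta$-periodicity recorded in Example~\ref{example:KT}, produces an isomorphism $\f_n(\KT)\iso S^{q,q}\smash\f_{n-q}(\KT)$ compatible with the structure map $\f_n(\KT)\rTo\KT$. Taking bidegree-$(4p+q,q)$ homotopy groups yields a commutative square identifying the image of $\pi_{4p+q,q}\f_n(\KT)\rTo\pi_{4p+q,q}(\KT)$ with the image of $\pi_{4p,0}\f_{n-q}(\KT)\rTo\pi_{4p,0}(\KT)$, under the $\eta^q$-isomorphism $\pi_{4p+q,q}(\KT)\iso\pi_{4p,0}(\KT)$. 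The problem thus reduces to showing $\f_m\pi_{4p,0}(\KT)=I^m(F)$ for $m=n-q$ and every $p\in\ZZ$, under the identification $\pi_{4p,0}(\KT)\iso W(F)$.

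Next, I would exploit the $4$-fold Bott periodicity of $\KT$: combining the Bott periodicity of $\KQ$ with the $\eta$-invertibility of $\KT$ produces a motivic equivalence $\Sigma^{4,0}\KT\iso\KT$. Since the topological suspension $\Sigma^{4,0}=S^4\smash(-)$ preserves $S^{2q,q}\smash\SHH^\eff$ for each $q$, it commutes with both $i_q$ and $r_q$, and hence with the slice truncation $\f_n$. One thus obtains an isomorphism $\Sigma^{4,0}\f_n(\KT)\iso\f_n(\KT)$ compatible with $\f_n(\KT)\rTo\KT$. Iterating $p$ times yields a compatible isomorphism $\pi_{4p,0}\f_n(\KT)\iso\pi_{0,0}\f_n(\KT)$, which reduces the assertion to the case $p=0$ settled by Corollary~\ref{cor:sliceisideal}.

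Finally, combining the preceding two reductions gives $\f_n\pi_{4p+q,q}(\KT)=I^{n-q}(F)$. The Arason-Pfister Hauptsatz $\bigcap_{m\geq 0}I^m(F)=0$ then implies that the slice filtration on every $\pi_{4p+q,q}(\KT)$ is Hausdorff, so by \cite[Lemma 7.2]{Voevodsky:open} the slice spectral sequence for $\KT$ converges strongly to the $I$-adic filtration on the Witt ring. The main technical subtlety is ensuring that the Bott periodicity of $\KT$ is indeed realized as a motivic equivalence that commutes with the slice truncations; granted this, the remaining argument is routine diagram-chasing with the commutative squares produced in the first two steps.
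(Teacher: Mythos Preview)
Your proposal is correct and follows essentially the same approach as the paper: reduce to Corollary~\ref{cor:sliceisideal} via the $(1,1)$- and $(4,0)$-periodicities of $\KT$, then invoke Arason--Pfister for the Hausdorff property. The paper's proof is a one-liner that leaves the periodicity reductions implicit, whereas you spell them out carefully; the compatibility of $\eta$-periodicity with $\f_n$ is exactly the content of Lemma~\ref{lem:hopf-slices} and Example~\ref{example:KT}, and the $(4,0)$-periodicity of $\KT$ (together with its commutation with $\f_n$, since $\Sigma^{4,0}$ preserves each $S^{2q,q}\smash\SHH^{\eff}$) is already used in the paper's treatment of the $d_1$-differentials, so your flagged ``technical subtlety'' is not an issue.
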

\begin{proof}
This follows from Corollary~\ref{cor:sliceisideal} and the main result in \cite{Arason-Pfister}, 
which shows the filtration of $W(F)$ by $I(F)$ is Hausdorff. 
For completeness we analyze the map $\pi_{p,0} \f_{q+1}(\KT)\rTo \pi_{p,0} \f_{q}(\KT)$ for columns $p\equiv 1,2,3 \bmod 4$ in Lemma~\ref{lem:filt-zero-123} below.
\end{proof}

\begin{lemma}\label{lem:filt-zero-123}
Let $q\geq 0$. 
The canonical map $\f_{q+1}(\KT) \rTo \f_{q}(\KT)$ induces the trivial map 
\[ 
\pi_{p,0} \f_{q+1}(\KT) 
\rTo 
\pi_{p,0} \f_{q}(\KT) 
\]
for $p\equiv 1,2,3 \bmod 4$.
\end{lemma}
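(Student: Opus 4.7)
The strategy is to chain the long exact sequence of the slice triangle with the $E^{2} = E^{\infty}$ collapse from Theorem~\ref{thm:e2-witt}, and then close via Hausdorffness imported from the column $p \equiv 0 \bmod 4$. Fix $q \geq 0$ and $p \equiv 1, 2, 3 \bmod 4$. The distinguished triangle $\f_{q+1}(\KT) \rTo \f_{q}(\KT) \rTo \s_{q}(\KT) \rTo \Sigma^{1,0}\f_{q+1}(\KT)$ induces the long exact sequence
\[
\pi_{p+1,0}\s_{q}(\KT) \rTo^{\partial_{q}'} \pi_{p,0}\f_{q+1}(\KT) \rTo^{\alpha_{q}} \pi_{p,0}\f_{q}(\KT) \rTo^{\beta_{q}} \pi_{p,0}\s_{q}(\KT),
\]
whose composite $\beta_{q+1}\circ\partial_{q}'$ is exactly the $d_{1}$-differential entering bidegree $(p, q+1)$; the vanishing of $\alpha_{q}$ is equivalent to surjectivity of $\partial_{q}'$.

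For $y \in \pi_{p,0}\f_{q+1}(\KT)$, the element $\beta_{q+1}(y)$ is a $d_{1}$-cycle by exactness, and Theorem~\ref{thm:e2-witt} ($E^{2}_{p,q+1}(\KT) = 0$) shows it is also a $d_{1}$-boundary: $\beta_{q+1}(y) = \beta_{q+1}(\partial_{q}'z)$ for some $z$. Exactness then gives $y = \partial_{q}' z + \alpha_{q+1}(w)$ for some $w \in \pi_{p,0}\f_{q+2}(\KT)$, and since $\alpha_{q}\partial_{q}' = 0$ one obtains $\alpha_{q}(y) = \alpha_{q}\alpha_{q+1}(w)$. Iterating the same reasoning at each successive weight (all with $E^{2} = 0$), $\alpha_{q}(y)$ lies in $\bigcap_{i \geq 1}\f_{q+i}\pi_{p,0}\f_{q}(\KT)$ in Voevodsky's notation, i.e.\ in the image of every $N$-fold transition $\pi_{p,0}\f_{q+N}(\KT) \rTo \pi_{p,0}\f_{q}(\KT)$.

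The main obstacle is to conclude that this infinite intersection vanishes. The plan is to apply the exact sequence of \cite[Lemma 7.2]{Voevodsky:open}, as used in the proof of Lemma~\ref{lem:ses-conv}, at bidegree $(p+1,0)$: the analysis splits by the residue class of $p$ modulo $4$, being immediate when $E^{\infty}_{p+1,q} = 0$ (that is, when $p \equiv 1, 2 \bmod 4$) and using the Milnor conjecture identification $I^{q}/I^{q+1} \iso h^{q,q}$ when $p \equiv 3 \bmod 4$. In each case the sequence identifies the intersection above with $\bigcap_{i \geq 0}\f_{q+i}\pi_{p,0}(\KT)$, which collapses to $\f_{q}\pi_{p,0}(\KT)$ because the filtration steps coincide by $E^{\infty}_{p,q} = 0$. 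Triviality of this image then follows from the Hausdorffness of the $I(F)$-adic filtration on $W(F)$ (Arason-Pfister) transported via the $\eta$-periodicity $\f_{q}(\KT) \iso S^{q,q}\smash\f_{0}(\KT)$ of Lemma~\ref{lemma:shift-slices} and the $4$-periodic structure of $\KT = \KQ[\eta^{-1}]$ afforded by the tower~(\ref{equation:KQKTtower}).
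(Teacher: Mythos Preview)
Your iterative argument is correct and is precisely the paper's approach for $p \equiv 3 \bmod 4$: push the element upward through the tower using $E^{2}_{p,q'}(\KT)=0$, land in $\bigcap_{i\geq 1}\f_{q+i}\pi_{p,0}\f_{q}(\KT)$, and kill it via Voevodsky's exact sequence. You apply this uniformly to all three residues, whereas the paper treats $p \equiv 1, 2 \bmod 4$ by a shorter \emph{downward} induction on $q$: the inductive hypothesis at $q-1$ makes the boundary $\pi_{p+1,0}\s_{q-1}(\KT)\to\pi_{p,0}\f_{q}(\KT)$ surjective, and the preimage $z$ of $\alpha_{q}(x)$ is a $d_{1}$-boundary (since $E^{2}_{p+1,q-1}=0$), hence lies in the image of $\beta_{q-1}$, so $\partial(z)=0$ directly. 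This avoids the infinite-intersection machinery for two of the three cases; your uniform treatment is cleaner to state but invokes heavier tools where they are not needed.

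Your closing step, however, is muddled. To show $\bigcap_{i\geq 0}\f_{q+i}\pi_{p,0}\KT = 0$ you reach for Arason--Pfister and ``transport via $\eta$-periodicity'', but Arason--Pfister concerns the $I$-adic filtration on $W(F)=\pi_{0,0}\KT$, and there is no evident mechanism that carries its Hausdorffness over to $\pi_{p,0}\KT$ for $p\not\equiv 0$. The correct and much simpler observation --- which the paper uses verbatim --- is that $\pi_{p,0}\KT = 0$ for $p\equiv 1,2,3\bmod 4$ over a field (these are the odd-shifted Witt groups of $F$); the intersection then vanishes trivially. For the case $p\equiv 3$ you also need the surjectivity of $\f_{q}\pi_{p+1,0}\KT/\f_{q+1}\pi_{p+1,0}\KT \to h^{q,q}$ in Voevodsky's sequence; this is exactly Lemma~\ref{lem:ses-conv}, transported along the $(4,0)$-periodicity of $\KT$ (which commutes with the slice filtration). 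Replace the Arason--Pfister appeal with these two facts and your proof is complete.
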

\begin{proof}
The statement is clear for $q=0$. 
Suppose that $p\equiv 1,2 \bmod 4$ and $x\in \pi_{p,0}\f_{q+1}(\KT)$.  
Its image $y\in \pi_{p,0}\f_{q}(\KT)$ lies in the kernel of the map $\pi_{p,0}\f_{q}(\KT)\rTo \pi_{p,0}\s_q(\KT)$. 
By induction, 
the map $\pi_{p+1,0} \s_{q-1}(\KT)\rTo \pi_{p,0}\f_{q}(\KT)$ is surjective. 
Hence there is an element $z\in \pi_{p+1,0} \s_{q-1}(\KT)$ mapping to $y$. 
Thus $z$ lies in the kernel of $d_{1}({p+1,q-1})$.
Theorem~\ref{thm:e2-witt} shows the kernel of $d_{1}({p+1,q-1})$ coincides with the image of $d_{1}({p+2,q})$. 
In particular, 
there is an element in $\pi_{p+2,q} \s_q(\KT)$ whose image is $z$, 
showing that $y=0$.

Suppose now that $p\equiv 3 \bmod 4$ and $x\in \pi_{p,0}\f_{q+1}(\KT)$.  
Consider its image $y\in \pi_{p,0} \s_{q+1}(\KT)$. 
Since its image under $d_{1}({p,q+1})$ is trivial,
Theorem~\ref{thm:e2-witt} furnishes an element $z\in\pi_{p+1,0}\s_{q+2}(\KT)$ whose image under $d_{1}({p+1,q+2})$ is precisely $y$.  
Consider the difference $x-w$, 
where $w$ is the image of $z$ in $\pi_{p,0}\f_{q+1}(\KT)$. 
The image of $x-w$ in $\pi_{p,0}\f_q(\KT)$ then coincides with the image of $x$.  
Since the image of $x-w$ in $\pi_{p,0}\s_{q+1}(\KT)$ is zero, 
there is an element $v\in \pi_{p,0}\f_{q+2}(\KT)$ mapping to $x-w$. 
Proceeding inductively yields an element 
\[ 
e
\in 
\bigcap_{i\geq 1}\f_{q+i}\pi_{p,0}\f_{q}(\KT).
\] 
However, 
using that $\pi_{p,0} \KT=0$,
this group is trivial by the exact sequence
\[ 
0 
\rTo 
\f_{q}\pi_{p+1,0}\KT/\f_{q+1}\pi_{p+1,0}\KT 
\rTo 
h^{q,q} 
\rTo
\bigcap_{i\geq 1} \f_{q+i} \pi_{p,0} \f_q(\KT) 
\rTo 
\bigcap_{i\geq 0}\f_{q+i} \pi_{p,0}\KT 
\rTo 
0
\] 
from the proof of \cite[Lemma 7.2]{Voevodsky:open}. 
Hence $e=0$, 
and the image of $x$ in $\pi_{p,0}\f_q(\KT)$ is zero.
\end{proof}

\begin{corollary}
\label{cor:filt-zero-123}
Let $q\geq 0$ and $p\equiv 2,3\bmod 4$. 
There is a canonically induced split short exact sequence of $\FF_2$-modules
\[ 
0
\rTo 
\pi_{p,0} \f_{q}(\KT) 
\rTo 
\pi_{p,0} \s_{q}(\KT) 
\rTo 
\pi_{p-1,0}\f_{q+1}(\KT) 
\rTo 
0. 
\]
\end{corollary}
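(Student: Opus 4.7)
The plan is to derive the asserted short exact sequence directly from the long exact sequence on homotopy groups associated with the distinguished triangle
\[ \f_{q+1}(\KT) \rTo \f_{q}(\KT) \rTo \s_{q}(\KT) \rTo S^{1,0} \smash \f_{q+1}(\KT). \]
This yields the six-term portion
\[ \pi_{p,0}\f_{q+1}(\KT) \rTo \pi_{p,0}\f_{q}(\KT) \rTo \pi_{p,0}\s_{q}(\KT) \rTo \pi_{p-1,0}\f_{q+1}(\KT) \rTo \pi_{p-1,0}\f_{q}(\KT), \]
so all we need is that the two outer arrows (the ones between filtration quotients) vanish.

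The first step is to check the parity hypotheses. Since $p \equiv 2, 3 \bmod 4$, the residues of $p$ and $p-1$ modulo $4$ lie in the set $\{1,2,3\}$. Hence Lemma~\ref{lem:filt-zero-123} applies to both $\pi_{p,0}\f_{q+1}(\KT)\rTo\pi_{p,0}\f_{q}(\KT)$ and $\pi_{p-1,0}\f_{q+1}(\KT)\rTo\pi_{p-1,0}\f_{q}(\KT)$, and both maps are trivial. Splicing these zeros into the long exact sequence yields exactness of
\[ 0 \rTo \pi_{p,0}\f_{q}(\KT) \rTo \pi_{p,0}\s_{q}(\KT) \rTo \pi_{p-1,0}\f_{q+1}(\KT) \rTo 0. \]

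For the $\FF_2$-module structure and splitting, the second step is to invoke the slice identification in (\ref{equation:wittheoryslices}), which presents $\s_{q}(\KT)$ as a wedge of suspensions of $\MZZ/2$. Consequently $\pi_{p,0}\s_{q}(\KT)\iso\bigdirectsum_{i}h^{2i+q-p,q}$ is an $\FF_{2}$-vector space, and therefore so are its sub- and quotient modules $\pi_{p,0}\f_{q}(\KT)$ and $\pi_{p-1,0}\f_{q+1}(\KT)$. Since $\FF_2$ is a field, any short exact sequence of $\FF_2$-vector spaces splits, giving the desired splitting noncanonically.

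There is no real obstacle here; the statement is essentially a formal consequence of Lemma~\ref{lem:filt-zero-123} combined with the mod-$2$ nature of the slices. The only subtlety worth flagging is the bookkeeping of residues modulo $4$ to confirm that both instances of Lemma~\ref{lem:filt-zero-123} are legitimate in the given range.
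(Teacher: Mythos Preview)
Your argument is correct and matches the paper's intended approach: the corollary is stated without proof in the paper precisely because it follows immediately from Lemma~\ref{lem:filt-zero-123} via the long exact sequence of the triangle $\f_{q+1}(\KT)\to\f_q(\KT)\to\s_q(\KT)$, together with the observation that the slices of $\KT$ are $\MZZ/2$-modules so everything in sight is an $\FF_2$-vector space. Your bookkeeping of residues and your justification of the splitting are exactly right.
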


\begin{corollary}
\label{cor:pifqkt}
For $q\geq 0$ there are canonically induced isomorphisms
\[ 
\pi_{p,0} \f_q(\KT) 
\iso 
\begin{cases} 
h^{q-1,q}\oplus h^{q-5,q}\oplus \dotsm & p \equiv 1 \bmod 4 \\
h^{q-2,q}\oplus h^{q-6,q}\oplus \dotsm & p \equiv 2 \bmod 4 \\
h^{q-3,q}\oplus h^{q-7,q}\oplus \dotsm & p \equiv 3 \bmod 4. 
\end{cases}
\]
\end{corollary}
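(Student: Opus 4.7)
The plan is to identify $\pi_{p,0}\f_q(\KT)$ with the kernel of the first slice differential $d_1^{\KT}\colon \pi_{p,0}\s_q(\KT) \rTo \pi_{p-1,0}\s_{q+1}(\KT)$ in each of the three residue classes $p \equiv 1, 2, 3 \bmod 4$, and then to read off the answer from the kernel descriptions produced during the proof of Theorem~\ref{thm:e2-witt}.

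First, Lemma~\ref{lem:filt-zero-123} makes the canonical map $\pi_{p,0}\f_{q+1}(\KT) \rTo \pi_{p,0}\f_q(\KT)$ trivial in all three cases. The long exact sequence associated to the cofiber sequence $\f_{q+1}(\KT) \rTo \f_q(\KT) \rTo \s_q(\KT)$ therefore exhibits $\pi_{p,0}\f_q(\KT) \hookrightarrow \pi_{p,0}\s_q(\KT)$ as an injection whose image is the kernel of the connecting map $\partial\colon \pi_{p,0}\s_q(\KT) \rTo \pi_{p-1,0}\f_{q+1}(\KT)$.

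The next step is to show $\ker\partial = \ker d_1^{\KT}$, noting that $d_1^{\KT}$ factors by definition as the composition of $\partial$ with the canonical map $\pi_{p-1,0}\f_{q+1}(\KT) \rTo \pi_{p-1,0}\s_{q+1}(\KT)$. For $p \equiv 2,3 \bmod 4$ the shifted index $p-1 \equiv 1,2 \bmod 4$ still lies in the range of Lemma~\ref{lem:filt-zero-123} applied one filtration step higher, so $\pi_{p-1,0}\f_{q+1}(\KT) \rTo \pi_{p-1,0}\s_{q+1}(\KT)$ is injective and the two kernels agree. The main obstacle is the case $p \equiv 1 \bmod 4$, where $p-1 \equiv 0 \bmod 4$ falls outside the range of Lemma~\ref{lem:filt-zero-123}. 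The idea here is to exploit instead the collapse $E^2_{p,q}=0$ from Theorem~\ref{thm:e2-witt}: any $x \in \ker d_1^{\KT}$ must be a $d_1^{\KT}$-boundary, so $x = d_1^{\KT}(y)$ for some $y \in \pi_{p+1,0}\s_{q-1}(\KT)$; applying the factorization of $d_1^{\KT}$ through $\partial$ at position $(p+1, q-1)$ exhibits $\partial(y) \in \pi_{p,0}\f_q(\KT)$ as the required lift of $x$.

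The last step is combinatorial. From the proof of Theorem~\ref{thm:e2-witt} the kernel $\ker d_1^{\KT}$ is presented as the subgroup of tuples in $\bigoplus_j h^{j,q}$ subject to constraints of the form $\tau a_j = \Sq^2 a_{j-2}$ at a specified arithmetic progression in $j$ determined by $p \bmod 4$. Since Lemma~\ref{lem:tau-iso} makes multiplication by $\tau$ invertible on $h^{j,q}$ for $j \leq q$, each constraint uniquely determines $a_j$ from $a_{j-2}$, leaving as free parameters the $a_{j-2}$ with indices in the complementary progression. Counting these yields $h^{q-1,q} \oplus h^{q-5,q} \oplus \dotsm$, $h^{q-2,q} \oplus h^{q-6,q} \oplus \dotsm$, and $h^{q-3,q} \oplus h^{q-7,q} \oplus \dotsm$ in the three respective cases, as claimed.
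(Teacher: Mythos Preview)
Your proposal is correct and follows essentially the same approach as the paper, which simply cites Theorem~\ref{thm:e2-witt}, Lemma~\ref{lem:filt-zero-123}, and Corollary~\ref{cor:filt-zero-123}; you have supplied the details those citations leave implicit. In particular, your treatment of the case $p\equiv 1\bmod 4$---where Corollary~\ref{cor:filt-zero-123} does not apply and you instead use the vanishing $E^2_{p,q}=0$ to lift kernel elements through the boundary factorization of $d_1^{\KT}$---is exactly the kind of argument the paper's terse proof presupposes.
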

\begin{proof}
Use Theorem \ref{thm:e2-witt}, Lemma~\ref{lem:filt-zero-123}, and Corollary~\ref{cor:filt-zero-123}. 
\end{proof}

\begin{corollary}
\label{cor:pi0fqkt}
For $q\geq 0$ the canonical map $\Sigma^{1,0}  \s_q(\KT) \rTo \f_{q+1}(\KT)$ induces a split short exact sequence
\[ 
0 
\rTo 
h^{q-3,q}\oplus h^{q-7,q} 
\oplus 
\dotsm 
\rTo 
\pi_{0,0}\f_{q+1}(\KT)
\rTo 
\f_{q+1} \pi_{0,0}\KT = I(F)^{q+1} 
\rTo 0. 
\]
Moreover, 
the map 
\[ 
\pi_{0,0}\f_q (\KT) 
\rTo 
\pi_{0,0}\f_{q-1}(\KT) 
\]
is injective on the image of $\pi_{0,0}\f_{q+1}(\KT)$.
\end{corollary}
\begin{proof}
The latter claim follows by a diagram chase and Theorem~\ref{thm:e2-witt}, since $E^2_{p,q} = 0$ if $p\equiv 1 \bmod 4$. 
Hence the exact sequence
\[ 
\dotsm 
\rTo^\beta 
\pi_{1,0} \s_q(\KT) 
\rTo^\alpha 
\pi_{0,0}\f_{q+1}(\KT) 
\rTo 
\pi_{0,0}\f_q(\KT) 
\rTo 
\dotsm \]
induces the short exact sequence
\[ 
0 
\rTo 
\pi_{1,0} \s_q(\KT)/\Ker(\alpha) 
\rTo 
\pi_{0,0}\f_{q+1}(\KT) 
\rTo 
\f_{q+1}\pi_{0,0} (\KT) 
\rTo 
0. 
\]
Since $\Ker(\alpha) = \Img(\beta) = \Img(d_{2,q-1}) = h^{q-1,q} \oplus h^{q-5,q}\oplus \dotsm$ by Theorem~\ref{thm:e2-witt},
the sequence is short exact. 
It splits by Lemma~\ref{lem:tau-iso}, 
since the composition of 
\[ 
h^{q-3,q}\oplus h^{q-7,q} \oplus \dotsm 
\rTo 
\pi_{0,0}\f_{q+1} 
\]
and 
\[ 
\pi_{0,0} \f_{q+1}(\KT) 
\rTo 
\pi_{0,0}\s_{q+1}(\KT) 
\iso
h^{q+1,q+1} \oplus h^{q-1,q+1}\oplus \dotsm \rTo^{\mathrm{pr}} h^{q-3,q+1} \oplus h^{q-7,q+1} \oplus \dotsm 
\]
is given by multiplication with $\tau\in h^{0,1}$. 
\end{proof}

Theorems~\ref{thm:e2-witt} and \ref{thm:convergence-witt} imply Theorem \ref{thm:main} stated in the introduction.
If $X\in\Sm_F$ is a semilocal scheme and $F$ a field of characteristic zero, 
our computations and results extend to the Witt ring $W(X)$ with fundamental ideal $I(X)$ and the mod-$2$ motivic cohomology of $X$.
Our reliance on the Milnor conjecture for Galois cohomology \cite{Voevodsky:Z/2} can be replaced by \cite[\S2.2]{Hoobler} or \cite[Theorem 7.8]{Kerz}, 
while the isomorphism (\ref{equation:abg}) holds for $X$ by \cite[Theorem 7.6]{Kerz}.
The rest of the proof is identical to the one given for fields.
Kerz proved a closely related result in \cite[Theorem 7.10]{Kerz}.
By periodicity of $\KT$ there is an evident variant of Theorem \ref{thm:e2-witt} for the $n$th slice spectral sequence of $\KT$ for every $n\in \ZZ$.
We note the following result from \cite{Baeza} is transparent from our computation of the slice spectral sequence for $\KT$. 
\begin{corollary}
If $X\in\Sm_F$ is a semilocal scheme of geometric origin then $W(X)$ contains no elements of odd order.
If $X$ is not formally real then $W(X)$ is a $2$-primary torsion group. 
\end{corollary}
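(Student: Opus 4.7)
The plan is to extract both assertions from the identification in Corollary~\ref{cor:sliceisideal} and Theorem~\ref{thm:convergence-witt} of the slice filtration with the $I$-adic filtration on the Witt ring, applied to the semilocal scheme $X$ as in the extension of our main results outlined immediately after Theorem~\ref{thm:convergence-witt}.

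First I would treat the absence of odd torsion. For each $q\geq 0$ the successive quotient $I(X)^q/I(X)^{q+1}$ is isomorphic to the mod-$2$ motivic cohomology group $h^{q,q}(X)$, hence is an $\FF_2$-vector space. An induction on $q$ then shows that every finite quotient $W(X)/I(X)^{q+1}$ is an abelian group annihilated by some power of $2$. If $x\in W(X)$ has odd order, its image in each such quotient is simultaneously of odd order and killed by a power of $2$, and therefore zero. Consequently $x\in\bigcap_{q\geq 0}I(X)^q$, and to conclude $x=0$ it suffices to invoke Hausdorffness of the $I(X)$-adic filtration on $W(X)$.

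For the second assertion, if $X$ is not formally real then $-1$ is a sum of squares in $\Gamma(X,\OO_X)$, and a standard Pfister-form manipulation makes a power of the unit form $\langle 1\rangle$ hyperbolic, so that $\langle 1\rangle$ is torsion in $W(X)$; since $W(X)$ is generated additively by one-dimensional forms, $W(X)$ is itself torsion. Combined with the first assertion this forces $W(X)$ to be $2$-primary torsion.

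The main obstacle, then, is verifying Hausdorffness of the $I(X)$-adic filtration in the semilocal setting. For fields this is the Arason--Pfister theorem \cite{Arason-Pfister} already invoked in Theorem~\ref{thm:convergence-witt}; for $X$ semilocal of geometric origin it can be reduced to the field case by injecting $W(X)$ into the product $\prod_{\eta} W(k(\eta))$ over the generic points $\eta$ of $X$, noting that this injection is compatible with the fundamental ideal so that $\bigcap_q I(X)^q$ maps into $\prod_{\eta}\bigcap_q I(k(\eta))^q=0$.
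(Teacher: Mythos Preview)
Your argument is correct and is precisely the elaboration the paper has in mind when it declares the result ``transparent from our computation of the slice spectral sequence for $\KT$'': the paper offers no further proof, so what you have written is the intended argument---the associated graded pieces $I(X)^q/I(X)^{q+1}\cong h^{q,q}(X)$ are $\FF_2$-vector spaces, Hausdorffness of the $I$-adic filtration kills any odd-order element, and the classical Pfister argument handles the not-formally-real case.

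The one place you add something the paper leaves implicit is the Hausdorffness step for semilocal $X$. The paper simply asserts that the field case extends; your reduction via the restriction map $W(X)\to\prod_\eta W(k(\eta))$ is a clean way to do this, but note that the injectivity of this map is itself a nontrivial input (a Gersten-type statement for Witt groups, due in the relevant generality to Ojanguren--Panin and Balmer), so you should cite it rather than leave it as a bare claim. With that reference supplied, your proof is complete and matches the paper's approach.
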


\section{Hermitian $K$-groups}
\label{section:hk-groups}
According to Theorem \ref{thm:slices-ko} the $E^{1}$-page of the $0$th slice spectral sequence for $\KQ$ takes the form

\[ 
E^{1}_{p,q} 
= 
\pi_{p,0}\s_{q}(\KQ) 
= 
\begin{cases}
H^{2q-p,q}\directsum \bigdirectsum_{i<\frac{q}{2}} h^{2i+(q-p),q} & q\equiv 0 \bmod 2 \\
\bigdirectsum_{i\leq\frac{q-1}{2}} h^{2i+(q-p),q} & q\equiv 1 \bmod 2. 
\end{cases}
\]

Using the formula in Theorem~\ref{thm:diff-ko} for the first differentials we are ready to perform low-degree computations in the slice spectral sequence for $\KQ$.
We assume throughout that $F$ is a field of $\Char(F)\neq 2$.
\vspace{0.1in}

The Beilinson-Soule vanishing conjecture predicts the integral motivic cohomology group $H^{2q-p,q}$ is trivial if $p>2q$, 
which holds for instance for finite fields and number fields.
(The same group is uniquely divisible if $p\geq 2q$, except when $p=q=0$).  
We note that $H^{4-p,2}=0$ for $p\geq 4$ by comparison with Lichtenbaum's weight-two motivic complex $\Gamma(2)$ \cite[\S 7]{BL}, \cite{Lichtenbaum}.
\vspace{0.1in}

In the following table for the $E^1$-page of the slice spectral sequence for $\KQ$, 
the group $E_{p,q}^1$ in bidegree $(p,q)$ is a direct sum of motivic cohomology groups positioned on the vertical line above $p$ and inbetween the horizontal lines 
corresponding to the weights $q$ and $q+1$.

\begin{center}
\begin{tikzpicture}[scale=1.2,font=\scriptsize,line width=1pt]
\draw[help lines] (-2.5,0) grid (9.5,7.2);
\foreach \i in {0,...,7} {\node[label=left:$\i$] at (-2.5,\i) {};}
\foreach \i in {-2,...,9} {\node[label=below:$\i$] at (\i,-.2) {};}

\foreach \i in {0,...,6} {\draw[fill] (-2,\i) circle (1pt)
                                        node[above right=-1pt] {$h^{\i,\i}$};}

{\draw[fill]     
(-2,2.5) circle (1pt) node[above right=-1pt] {$h^{0,2}$}
(-2,3.5) circle (1pt) node[above right=-1pt] {$h^{1,3}$}
(-2,4.33) circle (1pt) node[above right=-1pt] {$h^{2,4}$}
(-2,5.33) circle (1pt) node[above right=-1pt] {$h^{3,5}$}
(-2,6.25) circle (1pt) node[above right=-1pt] {$h^{4,6}$}
(-2,6.75) circle (1pt) node[above right=-1pt] {$h^{0,6}$}
;}
{\draw[fill]
(-2,4.66) circle (1pt) node[above right=-1pt] {$h^{0,4}$}
(-2,5.66) circle (1pt) node[above right=-1pt] {$h^{1,5}$}
(-2,6.5) circle (1pt) node[above right=-1pt] {$h^{2,6}$}
;}

{\draw[fill]      
(-1,3.5) circle (1pt) node[above right=-1pt] {$h^{0,3}$}
(-1,4.5) circle (1pt) node[above right=-1pt] {$h^{1,4}$}
(-1,5.33) circle (1pt) node[above right=-1pt] {$h^{2,5}$}
(-1,6.33) circle (1pt) node[above right=-1pt] {$h^{3,6}$}
;}

{\draw[fill]
(-1,1) circle (1pt) node[above right=-1pt] {$h^{0,1}$}
(-1,2) circle (1pt) node[above right=-1pt] {$h^{1,2}$}
(-1,3) circle (1pt) node[above right=-1pt] {$h^{2,3}$}
(-1,4) circle (1pt) node[above right=-1pt] {$h^{3,4}$}
(-1,5) circle (1pt) node[above right=-1pt] {$h^{4,5}$}
(-1,5.66) circle (1pt) node[above right=-1pt] {$h^{0,5}$}
(-1,6) circle (1pt) node[above right=-1pt] {$h^{5,6}$}
(-1,6.66) circle (1pt) node[above right=-1pt] {$h^{1,6}$}
;}

\foreach \i in {1,...,6} {\draw[fill] (0,\i) circle (1pt)
                                        node[above right=-1pt] {$h^{\i,\i}$};}

{\draw[fill]     
(0,0) circle (0pt) node[above right=1pt] {{$H^{0,0}$}}
(0,4.66) circle (1pt) node[above right=-1pt] {$h^{0,4}$}
(0,5.66) circle (1pt) node[above right=-1pt] {$h^{1,5}$}
(0,6.5) circle (1pt) node[above right=-1pt] {$h^{2,6}$}
;}

{\draw[fill]
(0,2.5) circle (1pt) node[above right=-1pt] {$h^{0,2}$}
(0,3.5) circle (1pt) node[above right=-1pt] {$h^{1,3}$}
(0,4.33) circle (1pt) node[above right=-1pt] {$h^{2,4}$}
(0,5.33) circle (1pt) node[above right=-1pt] {$h^{3,5}$}
(0,6.25) circle (1pt) node[above right=-1pt] {$h^{4,6}$}
(0,6.75) circle (1pt) node[above right=-1pt] {$h^{0,6}$}
;}

{\draw[fill]      
(1,1) circle (1pt) node[above right=-1pt] {$h^{0,1}$}
(1,2) circle (1pt) node[above right=-1pt] {$h^{1,2}$}
(1,3) circle (1pt) node[above right=-1pt] {$h^{2,3}$}
(1,4) circle (1pt) node[above right=-1pt] {$h^{3,4}$}
(1,5) circle (1pt) node[above right=-1pt] {$h^{4,5}$}
(1,5.66) circle (1pt) node[above right=-1pt] {$h^{0,5}$}
(1,6) circle (1pt) node[above right=-1pt] {$h^{5,6}$}
(1,6.66) circle (1pt) node[above right=-1pt] {$h^{1,6}$}
;}

{\draw[fill]
(1,3.5) circle (1pt) node[above right=-1pt] {$h^{0,3}$}
(1,4.5) circle (1pt) node[above right=-1pt] {$h^{1,4}$}
(1,5.33) circle (1pt) node[above right=-1pt] {$h^{2,5}$}
(1,6.33) circle (0pt) node[above right=1pt] {$h^{3,6}$}
;}

\foreach \i in {3,...,6} {\draw[fill] (2,\i) circle (1pt)
                                       node[above right=-1pt] {$h^{\i,\i}$};}

{\draw[fill]     
(2,2.5) circle (1pt) node[above right=-1pt] {$h^{0,2}$}
(2,3.5) circle (1pt) node[above right=-1pt] {$h^{1,3}$}
(2,4.33) circle (1pt) node[above right=-1pt] {$h^{2,4}$}
(2,5.33) circle (1pt) node[above right=-1pt] {$h^{3,5}$}
(2,6.25) circle (1pt) node[above right=-1pt] {$h^{4,6}$}
(2,6.75) circle (1pt) node[above right=-1pt] {$h^{0,6}$}
;}
{\draw[fill]
(2,2) circle (0pt) node[above right=1pt] {{$H^{2,2}$}}
(2,4.66) circle (1pt) node[above right=-1pt] {$h^{0,4}$}
(2,5.66) circle (1pt) node[above right=-1pt] {$h^{1,5}$}
(2,6.5) circle (1pt) node[above right=-1pt] {$h^{2,6}$}
;}

{\draw[fill]      
(3,3.5) circle (1pt) node[above right=-1pt] {$h^{0,3}$}
(3,4.5) circle (1pt) node[above right=-1pt] {$h^{1,4}$}
(3,5.33) circle (1pt) node[above right=-1pt] {$h^{2,5}$}
(3,6.33) circle (1pt) node[above right=-1pt] {$h^{3,6}$}
;}

{\draw[fill]
(3,2) circle (0pt) node[above right=1pt] {$H^{1,2}$}
(3,3) circle (1pt) node[above right=-1pt] {$h^{2,3}$}
(3,4) circle (1pt) node[above right=-1pt] {$h^{3,4}$}
(3,5) circle (1pt) node[above right=-1pt] {$h^{4,5}$}
(3,5.66) circle (1pt) node[above right=-1pt] {$h^{0,5}$}
(3,6) circle (1pt) node[above right=-1pt] {$h^{5,6}$}
(3,6.66) circle (1pt) node[above right=-1pt] {$h^{1,6}$}
;}

\foreach \i in {5,...,6} {\draw[fill] (4,\i) circle (1pt)
                                        node[above right=-1pt] {$h^{\i,\i}$};}

{\draw[fill]     
(4,4) circle (0pt) node[above right=1pt] {{$H^{4,4}$}}
(4,4.66) circle (1pt) node[above right=-1pt] {$h^{0,4}$}
(4,5.66) circle (1pt) node[above right=-1pt] {$h^{1,5}$}
(4,6.5) circle (1pt) node[above right=-1pt] {$h^{2,6}$}
;}

{\draw[fill]
(4,3.0) circle (1pt) node[above right=-1pt] {$h^{1,3}$}
(4,4.33) circle (1pt) node[above right=-1pt] {$h^{2,4}$}
(4,5.33) circle (1pt) node[above right=-1pt] {$h^{3,5}$}
(4,6.25) circle (1pt) node[above right=-1pt] {$h^{4,6}$}
(4,6.75) circle (1pt) node[above right=-1pt] {$h^{0,6}$}
;}

{\draw[fill]      
(5,4) circle (0pt) node[above right=1pt] {$H^{3,4}$}
(5,5) circle (1pt) node[above right=-1pt] {$h^{4,5}$}
(5,5.66) circle (1pt) node[above right=-1pt] {$h^{0,5}$}
(5,6) circle (1pt) node[above right=-1pt] {$h^{5,6}$}
(5,6.66) circle (1pt) node[above right=-1pt] {$h^{1,6}$}
;}

{\draw[fill]
(5,3.0) circle (1pt) node[above right=-1pt] {$h^{0,3}$}
(5,4.5) circle (1pt) node[above right=-1pt] {$h^{1,4}$}
(5,5.33) circle (1pt) node[above right=-1pt] {$h^{2,5}$}
(5,6.33) circle (1pt) node[above right=-1pt] {$h^{3,6}$}
;}

{\draw[fill]     
(6,4.0) circle (0pt) node[above right=1pt] {{$H^{2,4}$}}
(6,5.0) circle (1pt) node[above right=-1pt] {$h^{3,5}$}
(6,6.25) circle (1pt) node[above right=-1pt] {$h^{4,6}$}
(6,6.75) circle (1pt) node[above right=-1pt] {$h^{0,6}$}
;}
{\draw[fill]
(6,4.5) circle (1pt) node[above right=-1pt] {$h^{0,4}$}
(6,5.5) circle (1pt) node[above right=-1pt] {$h^{1,5}$}
(6,6) circle (0pt) node[above right=1pt] {{$H^{6,6}$}}
(6,6.5) circle (1pt) node[above right=-1pt] {$h^{2,6}$}
;}

{\draw[fill]      
(7,4.0) circle (0pt) node[above right=1pt] {$H^{1,4}$}
(7,5.0) circle (1pt) node[above right=-1pt] {$h^{2,5}$}
(7,6.33) circle (1pt) node[above right=-1pt] {$h^{3,6}$}
;}

{\draw[fill]
(7,5.5) circle (1pt) node[above right=-1pt] {$h^{0,5}$}
(7,6) circle (0pt) node[above right=1pt] {$H^{5,6}$}
(7,6.66) circle (1pt) node[above right=-1pt] {$h^{1,6}$}
;}

{\draw[fill]     
(8,4.0) circle (0pt) node[above right=1pt] {{$H^{0,4}$}}
(8,5.0) circle (1pt) node[above right=-1pt] {$h^{1,5}$}
(8,6.33) circle (1pt) node[above right=-1pt] {$h^{2,6}$}
;}

{\draw[fill]
(8,6.0) circle (0pt) node[above right=1pt] {{$H^{4,6}$}}
(8,6.66) circle (1pt) node[above right=-1pt] {$h^{0,6}$}
;}

{\draw[fill]      
(9,4.0) circle (0pt) node[above right=1pt] {$H^{-1,4}$}
(9,5.0) circle (1pt) node[above right=-1pt] {{$h^{0,5}$}}
(9,6.33) circle (1pt) node[above right=-1pt] {$h^{1,6}$}
;}

{\draw[fill]
(9,6.0) circle (2pt) node[above right=-1pt] {{$H^{3,6}$}}
;}
\node at (0,0) [shape=rectangle,draw,fill]{};
\node at (2,2) [shape=rectangle,draw,fill]{};
\node at (3,2) [shape=rectangle,draw,fill]{};
\node at (4,4) [shape=rectangle,draw,fill]{};
\node at (5,4) [shape=rectangle,draw,fill]{};
\node at (6,4) [shape=rectangle,draw,fill]{};
\node at (7,4) [shape=rectangle,draw,fill]{};
\node at (8,4) [shape=rectangle,draw,fill]{};
\node at (9,4) [shape=rectangle,draw,fill]{};
\node at (6,6) [shape=rectangle,draw,fill]{};
\node at (7,6) [shape=rectangle,draw,fill]{};
\node at (8,6) [shape=rectangle,draw,fill]{};
\node at (9,6) [shape=rectangle,draw,fill]{};

\end{tikzpicture}
\end{center}

A comparison with $\KT$ via Proposition \ref{prop:slices-unit-kt} implies:
\begin{lemma}
\label{lemma:BScomputation}
Suppose that $p\leq 3$. Then the canonical map induces an identification
$E^{2}_{p,q}(\KQ)=E^{2}_{p,q}(\KT)$ for all $q\geq p+1$ if $p$ is even and for all
$q\geq p+2$ if $p$ is odd. 
If $F$ satisfies the Beilinson-Soule vanishing conjecture, the identification
holds also for $p>3$.
\end{lemma}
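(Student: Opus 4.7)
The plan is to identify the $E^2$-pages via the canonical map $u\colon \KQ \to \KT$, by comparing $E^1$-pages summand-by-summand and tracing $d_1$-differentials. First, I would write out $E^1_{p,q}(\KQ)$ and $E^1_{p,q}(\KT)$ explicitly using the slice decompositions in \eqref{equation:hermitianktheoryslices} and \eqref{equation:wittheoryslices}. By Proposition~\ref{prop:slices-unit-kt}, the map $u$ restricts to the identity on every mod-$2$ wedge summand and to the projection $\MZZ \to \MZZ/2$ on the integral summand $S^{4q,2q}\smash \MZZ$ of $\s_{2q}(\KQ)$. Taking $\pi_{p,0}$ gives a summand-wise map from $E^1_{p,q}(\KQ) \cong H^{2q-p,q} \oplus \bigoplus_i h^{2i+q-p,q}$ (the integral piece present only for even $q$) to $E^1_{p,q}(\KT) \cong \bigoplus_i h^{2i+q-p,q}$.

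The crucial vanishing is $H^{m,n}(F) = 0$ whenever $m > n$, valid for any field $F$ because $\Spec F$ has dimension zero. In the regime $q \geq p+1$ (for $p$ even) or $q \geq p+2$ (for $p$ odd), we have $q > p$ and hence $2q-p > q$, so the integral summand of $E^1_{p,q}(\KQ)$ vanishes. Thus $u$ induces an isomorphism of $E^1_{p,q}$-terms, matching the mod-$2$ summands identically. One then verifies that $d_1^{\KQ}$ and $d_1^{\KT}$ agree under $u$: on mod-$2$ summands this is immediate from Theorems~\ref{thm:diff-ko} and \ref{thm:diff-witt-theory}, whose formulas coincide. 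On any integral source at an adjacent bidegree $(p+1, q-1)$ the $\KQ$-formula factors through $\pr$, and the identity $\Sq^{1}\circ \pr = 0$—which holds since an integrally-lifted class automatically lifts modulo $4$—kills the $\rho\Sq^1$ terms in $d_1^{\KT}$ that are absent from $d_1^{\KQ}$.

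The main obstacle is controlling the comparison at the boundary bidegrees, namely $(p+1, q-1)$ with $q-1$ even, where an integral summand $H^{2q-p-3,q-1}$ can appear in $\KQ$ without a counterpart in $\KT$. For $p \leq 3$ this case analysis is narrow enough to dispatch by hand: the integral summand either vanishes by the $m>n$ rule or else its $d_1$ lands in a piece of $E^1_{p,q}(\KT)$ whose $\KT$-image (via $d_1^{\KT}$ on the mod-$2$ source) is already reached via $u$, so the kernel-mod-image quotients coincide at $E^2$. For $p > 3$, the relevant integral summands do not vanish for $m \leq n$ without further input, and this is exactly where Beilinson--Soul\'e vanishing is invoked to eliminate their contribution. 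Assembling the $E^1$-identification and the $d_1$-compatibility yields the claimed $E^2$-identification.
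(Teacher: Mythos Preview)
Your approach is essentially the same as the paper's: the paper offers no proof beyond the sentence ``A comparison with $\KT$ via Proposition~\ref{prop:slices-unit-kt} implies,'' and your proposal is a reasonable unpacking of that comparison. You correctly identify the key ingredients --- the vanishing $H^{m,n}(F)=0$ for $m>n$ over a field, the summand-by-summand description of $\s_q(u)$, and the observation that $\Sq^1\circ\pr=0$ forces the $\KQ$- and $\KT$-differentials to agree on the image of $\pr$.

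One point deserves sharpening. Your explanation of why Beilinson--Soul\'e vanishing enters for $p>3$ is imprecise: the integral summand at the boundary source $E^1_{p+1,q-1}$ (namely $H^{p-1,p}$ for $p$ even, $H^{p+1,p+1}$ for $p$ odd) already fails to vanish for $p=1,2,3$, so this cannot be the distinction. What actually matters at the boundary is whether the images of the entering differentials coincide. For $p$ odd the cokernel of $\pr\colon H^{p+1,p+1}\to h^{p+1,p+1}$ is ${}_2 H^{p+2,p+1}=0$, so $\pr$ is surjective and the images agree automatically. For $p$ even one must check that the discrepancy between $d_1^{\KT}|_{h^{p-1,p}}$ and $d_1^{\KQ}|_{H^{p-1,p}}$ --- which arises from the $\rho\Sq^1$ and $\Sq^3\Sq^1$ terms present in $\KT$ but absent after $\pr$ --- is absorbed by contributions from the remaining summands; this is the genuine content of your ``dispatch by hand.'' The role of Beilinson--Soul\'e for $p>3$ is subtler than ``making integral summands vanish,'' and you should not claim otherwise without tracing it through.
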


Lemma \ref{lemma:BScomputation} combined with the classical computation of $\pi_{0,0}\KQ$ as the Grothendieck-Witt group 
of $F$ basically determines the $0$th and $1$st columns of the $0$th slice spectral sequence of $\KQ$.
In the $1$st column we note:
There are no entering or exiting differentials in weight $q\leq 2$.
In weight three the exiting differential has kernel $h^{2,3}$ by Corollary~\ref{cor:sq-weight-1}. 
Moreover,
the differential 
\[ 
E^{1}_{2,2}(\KQ)
=
H^{2,2}\directsum h^{0,2}
\rTo 
h^{2,2}\directsum h^{0,2} 
\rTo^{\tau+\Sq^{2}} 
h^{2,3} 
\]
is surjective by Lemma~\ref{lem:tau-iso} and Corollary~\ref{cor:sq-weight-1}.
All $d_{r}$-differentials exiting $E^{r}_{p,q}(\KQ)$ for $p=0$, $p=1$ and $r\geq 2$ are trivial.

\begin{lemma}
There are isomorphisms
\begin{equation*}
E^{\infty}_{p,q}(\KQ)
\cong
\begin{cases}
H^{0,0}  & p=q=0 \\
h^{q,q}  & p=0, q>0 \\
h^{0,1}  & p=q=1 \\
h^{1,2}  & p=1, q=2 \\
0 & p=1, q\neq 1,2.
\end{cases}
\end{equation*}
\end{lemma}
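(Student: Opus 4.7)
The strategy is to compute $E^\infty_{p,q}(\KQ)$ in columns $p=0$ and $p=1$ by combining the comparison with $\KT$ for generic weights with direct inspection at low weights. Higher differentials are controlled by the observation already noted in the text that all $d_r$-differentials ($r\geq 2$) exiting columns $p\in\{0,1\}$ are trivial.

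For the generic range, Lemma \ref{lemma:BScomputation} identifies $E^2_{p,q}(\KQ)\cong E^2_{p,q}(\KT)$ when $p\leq 3$ and $q\geq p+1$ (for $p$ even) or $q\geq p+2$ (for $p$ odd). Plugging in Theorem \ref{thm:e2-witt} yields $E^2_{0,q}(\KQ)=h^{q,q}$ for all $q\geq 1$ and $E^2_{1,q}(\KQ)=0$ for all $q\geq 3$. The remaining low-weight positions $(0,0)$, $(1,0)$, $(1,1)$, $(1,2)$ are read off the $E^1$-page formula
\[
E^{1}_{p,q}=\begin{cases} H^{2q-p,q}\oplus \bigoplus_{i<q/2} h^{2i+q-p,q} & q\equiv 0\bmod 2 \\ \bigoplus_{i\leq (q-1)/2} h^{2i+q-p,q} & q\equiv 1\bmod 2, \end{cases}
\]
using the dimensional vanishing $H^{p,q}(F)=h^{p,q}(F)=0$ whenever $p>q$, $p<0$, or $q<0$. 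This reduces $E^1_{0,0}$ to $H^{0,0}$, $E^1_{1,0}$ to $0$, $E^1_{1,1}$ to $h^{0,1}$, and $E^1_{1,2}$ to $h^{1,2}$ (in the last case, the integral summand $H^{3,2}$ vanishes since $3>2$).

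To conclude, the paragraph preceding the lemma already verifies that no $d_1$-differential enters or exits column $p=1$ in weights $q\leq 2$, so the low-weight column-$1$ entries persist to $E^2$. At position $(0,0)$ the entering $d_1$ has trivial source for weight reasons, while the exiting $d_1$ is given by Theorem \ref{thm:diff-ko} as $(0,\Sq^2\circ\pr,0)$ and vanishes because $\Sq^2$ maps $h^{0,0}$ into $h^{2,1}=0$. The triviality of $d_r$ ($r\geq 2$) exiting columns $p\in\{0,1\}$ then gives $E^2=E^\infty$ on column $p=0$ and eliminates outgoing higher differentials from column $p=1$; any potential incoming higher differential to a low-weight entry in column $p=1$ would originate at a position $(2,q-r)$ with $q-r\leq 0$, where $E^1_{2,q-r}=0$ by the vanishing of motivic cohomology in weight $\leq 0$ away from the origin.

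\textbf{Main obstacle.} The main conceptual input is the dimensional vanishing $H^{3,2}(F)=0$ that removes the integral summand at $(1,2)$, together with the identification of the $d_1$-differential out of $H^{0,0}$; once the formula from Theorem \ref{thm:diff-ko} is combined with $h^{2,1}=0$, the rest of the argument is organizational.
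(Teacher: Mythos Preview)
Your proposal is correct and follows essentially the same approach as the paper, which does not give a separate proof but relies on the preceding paragraphs: comparison with $\KT$ via Lemma~\ref{lemma:BScomputation} and Theorem~\ref{thm:e2-witt} for $q$ large, direct inspection of the $E^1$-page at low weights, and the stated triviality of higher differentials exiting columns $p\in\{0,1\}$. Your treatment is in fact slightly more explicit than the paper's, since you spell out why $E^1_{1,2}=h^{1,2}$ via $H^{3,2}=0$ and why the outgoing $d_1$ at $(0,0)$ vanishes.
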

\begin{remark}
There are isomorphisms $H^{0,0}\cong\ZZ$, 
$h^{0,1}\cong\ZZ/2$,
and $h^{1,2}\cong F^{\times}/2$.
\end{remark}

By computing in the $2$nd column we obtain the motivic cohomological description of the second orthogonal $K$-group $KO_{2}(F)$
stated in Theorem \ref{thm:mainhermitian}.
\begin{lemma}
There are isomorphisms
\begin{equation*}
E^{\infty}_{2,q}(\KQ)
\cong
\begin{cases}
\mathrm{ker}(\tau\circ\pr+\Sq^{2}\colon H^{2,2}\directsum h^{0,2}\rTo h^{2,3}) & q=2 \\
0 & q\neq 2.
\end{cases}
\end{equation*}
\end{lemma}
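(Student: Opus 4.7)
The plan is to identify $E^{1}_{2,q}(\KQ)$, compute the $d_{1}$-differential via Theorem \ref{thm:diff-ko}, and rule out higher differentials using the comparison with $\KT$ furnished by Lemma \ref{lemma:BScomputation} and Theorem \ref{thm:e2-witt}. I would begin by reading off $E^{1}_{2,q}(\KQ)=\pi_{2,0}\s_{q}(\KQ)$ from the slice decomposition (\ref{equation:hermitianktheoryslices}): the group vanishes for $q\leq 1$; for $q=2$ it equals $H^{2,2}\directsum h^{0,2}$, the former from the top $\MZZ$-summand and the latter from the $\MZZ/2$-summand at position $i=0$; and for $q\geq 3$ the putative integral contribution $H^{2q-2,q}$ vanishes since $2q-2>q$, so that $E^{1}_{2,q}(\KQ)$ coincides with $E^{1}_{2,q}(\KT)$.

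For $q=2$ I would compute the outgoing $d_{1}$-differential into $E^{1}_{1,3}(\KQ)=h^{0,3}\directsum h^{2,3}$ using Theorem \ref{thm:diff-ko}. The top summand $H^{2,2}$ contributes via $d_{1}^{\KQ}(2,2)=(0,\Sq^{2}\circ\pr,\tau\circ\pr)$ and the $h^{0,2}$-summand via $d_{1}^{\KQ}(2,0)=(\Sq^{3}\Sq^{1},\Sq^{2},0)$. Tracing each of the three components into its target summand of $\s_{3}(\KQ)$, and observing that those landing in $h^{4,3}$ or $h^{6,3}$ die because $h^{p,q}=0$ for $p>q$, the combined map collapses to $(x,y)\mapsto (0,\tau\circ\pr(x)+\Sq^{2}(y))$. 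Since $E^{1}_{3,1}(\KQ)=0$ there is no entering $d_{1}$, yielding $E^{2}_{2,2}(\KQ)=\mathrm{ker}(\tau\circ\pr+\Sq^{2}\colon H^{2,2}\directsum h^{0,2}\to h^{2,3})$. For $q\geq 3$, Lemma \ref{lemma:BScomputation} gives $E^{2}_{2,q}(\KQ)\iso E^{2}_{2,q}(\KT)$, which vanishes by Theorem \ref{thm:e2-witt}; the cases $q\leq 1$ are immediate.

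Finally I would upgrade $E^{2}$ to $E^{\infty}$ in column $p=2$. Outgoing $d_{r}\colon E^{r}_{2,2}\to E^{r}_{1,2+r}$ for $r\geq 2$ land in subquotients of $E^{2}_{1,2+r}(\KQ)$, which agree with $E^{2}_{1,2+r}(\KT)$ by Lemma \ref{lemma:BScomputation} (applicable since $p=1$ is odd and $2+r\geq 3$) and vanish by Theorem \ref{thm:e2-witt}. Incoming $d_{r}$ come from $E^{r}_{3,q'}$ with $q'\leq 1$, where $\pi_{3,0}\s_{q'}(\KQ)$ is already zero. The hard part will be the $d_{1}$-bookkeeping for $q=2$: one must trace the three-component differential of Theorem \ref{thm:diff-ko} from each source summand of $\s_{2}(\KQ)$ into the correct summand of $\s_{3}(\KQ)$, and verify by hand that the only surviving contributions into $E^{1}_{1,3}(\KQ)$ are $\tau\circ\pr$ and $\Sq^{2}$ landing in $h^{2,3}$.
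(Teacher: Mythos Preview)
Your argument is correct and follows essentially the same strategy as the paper: compute the outgoing $d_1$ at $q=2$ from Theorem~\ref{thm:diff-ko}, and use the comparison with $\KT$ together with Theorem~\ref{thm:e2-witt} for the vanishing in higher weights. Two small remarks. First, the paper carries out the weight-three case by hand (identifying the kernel of $\tau+\Sq^2\colon h^{3,3}\oplus h^{1,3}\to h^{3,4}$ with $h^{1,1}$ and checking that the entering differential from $H^{1,2}$ surjects onto it), whereas you invoke Lemma~\ref{lemma:BScomputation} directly for all $q\geq 3$; your route is cleaner, and the explicit weight-three computation in the paper is redundant given that lemma. Second, you are more explicit than the paper about ruling out higher differentials; the paper leaves this implicit, relying on the established vanishing of $E^2_{1,q}(\KQ)$ for $q\geq 3$ and of $E^1_{3,q'}(\KQ)$ for $q'\leq 1$, exactly as you spell out.
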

\begin{proof}
The claim for $q=2$ follows from Theorem \ref{thm:diff-ko}.
We note that the kernel of the surjection $\tau\circ\pr+\Sq^{2}$ is the preimage of the subgroup $\{0,\rho^{2}\}\subseteq h^{2,2}$ 
under $\pr\colon H^{2,2}\rTo h^{2,2}$.
If $\rho^{2}=0$, 
this group coincides with $2H^{2,2}$. 
In weight three, 
the kernel of 
\[ 
\tau+\Sq^{2}
\colon
h^{3,3}\directsum h^{1,3}
\rTo
h^{3,4} 
\] 
is isomorphic to $h^{1,1}$ via $\phi\rMapsto (\rho^{2}\phi,\tau^{2}\phi)$. 
The image of the entering differential corresponds to the image of $H^{1,1}$ in $h^{1,1}$ under this isomorphism,
hence it coincides with $h^{1,1}$.
The remaining vanishing follows from Lemma \ref{lemma:BScomputation}.
\end{proof}
\begin{remark}
There are isomorphisms $H^{2,2}\cong K_{2}(F)$, 
$h^{0,2}\cong\ZZ/2$,
and $h^{2,3}\cong \: _{2}Br(F)$ the $2$-torsion subgroup of the Brauer group of equivalence classes of central simple $F$-algebras.
\end{remark}

\begin{lemma}
There is a short exact sequence
$
0
\to
h^{0,3}
\to
KO_{3}(F)
\to
2H^{1,2}
\to 
0
$
and isomorphisms
\begin{equation*}
E^{\infty}_{3,q}(\KQ)
\cong
\begin{cases}
2H^{1,2} & q=2 \\
h^{0,3} & q=3 \\
0 & q\neq 2, 3.
\end{cases}
\end{equation*}
\end{lemma}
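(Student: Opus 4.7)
The plan is to compute the $E^\infty$-column at $p=3$ explicitly and then read off the slice filtration on $\pi_{3,0}\KQ \iso KO_{3}(F)$. I would begin by listing $E^{1}_{3,q} = \pi_{3,0}\s_{q}(\KQ)$ using \eqref{equation:hermitianktheoryslices} and the vanishing $H^{p,q}(F)=0$ for $p>q$. This leaves $E^{1}_{3,2}=H^{1,2}$ (only the integral summand of $\s_{2}\KQ$ contributes), $E^{1}_{3,3}=h^{0,3}\oplus h^{2,3}$, $E^{1}_{3,4}=h^{1,4}\oplus h^{3,4}$ (the integral part $H^{5,4}$ vanishes), and $E^{1}_{3,q}=0$ for $q=0,1$. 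For $q\geq 5$, Lemma~\ref{lemma:BScomputation} identifies $E^{2}_{3,q}(\KQ)$ with $E^{2}_{3,q}(\KT)$, which is trivial by Theorem~\ref{thm:e2-witt} since $3\not\equiv 0\bmod 4$.

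Next, Theorem~\ref{thm:diff-ko} together with Corollary~\ref{cor:sq-weight-1} determine the relevant $d_{1}$-differentials. For $q=2$ the entering differential from $E^{1}_{4,1}=0$ vanishes, and the exiting one is $(\Sq^{2}\circ\pr,\tau\circ\pr)\colon H^{1,2}\rTo h^{3,3}\oplus h^{1,3}$; the injectivity of $\tau\colon h^{1,2}\rTo h^{1,3}$ provided by Lemma~\ref{lem:tau-iso} forces its kernel to be precisely $2H^{1,2}$. For $q=3$ the entering $d_{1}$ again vanishes since $E^{1}_{4,2}=H^{0,2}=0$, and the Adem identities $\Sq^{2}\tau=\tau\Sq^{2}+\tau\rho\Sq^{1}$ together with $\Sq^{2}\Sq^{2}=\tau\Sq^{3}\Sq^{1}$ show that the kernel of the exiting $d_{1}$ on $h^{0,3}\oplus h^{2,3}$ is parametrised by $y\in h^{0,3}$ with $x=\tau^{-1}\Sq^{2}y\in h^{2,3}$; projection onto the $y$-coordinate yields $E^{2}_{3,3}\iso h^{0,3}$.

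The case $q=4$ is the crux and requires matching the image of the entering $d_{1}\colon h^{1,3}\rTo h^{3,4}\oplus h^{1,4}$ with the kernel of the exiting $d_{1}\colon h^{3,4}\oplus h^{1,4}\rTo h^{5,5}\oplus h^{3,5}\oplus h^{1,5}$. The same two Adem identities present the kernel as $\{(\tau^{-1}\Sq^{2}y,y):y\in h^{1,4}\}$, while Corollary~\ref{cor:sq-weight-1} identifies the image as $\{(\rho^{2}\tau^{-2}y,y):y\in h^{1,4}\}$ after reparametrising by $y=\tau^{3}w$. The two subgroups coincide by the identity $\Sq^{2}(\tau^{3}w)=\rho^{2}\tau^{2}w$ from the same corollary, yielding $E^{2}_{3,4}=0$.

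It remains to promote $E^{2}$ to $E^{\infty}$: every outgoing $d_{r}$ with $r\geq 2$ from $E^{r}_{3,q}$ lands in $E^{r}_{2,q+r}$, which vanishes by Lemma~\ref{lemma:BScomputation} and Theorem~\ref{thm:e2-witt} since $2\not\equiv 0\bmod 4$, while every incoming $d_{r}$ originates in $E^{r}_{4,q-r}$ and is trivial for $q-r\leq 1$ by the bidegree bookkeeping carried out above. The nontrivial graded pieces of the slice filtration on $KO_{3}(F)$ are thus $E^{\infty}_{3,2}=2H^{1,2}$ and $E^{\infty}_{3,3}=h^{0,3}$, which assemble into the claimed short exact sequence. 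The principal obstacle is the Steenrod-algebra bookkeeping at $E^{2}_{3,4}$, where the identification of kernel and image rests on the delicate interplay of the two Adem relations with Corollary~\ref{cor:sq-weight-1}.
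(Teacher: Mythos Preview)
Your argument is correct and follows essentially the same route as the paper: compute $E^{2}_{3,q}$ weight by weight from Theorem~\ref{thm:diff-ko} and Corollary~\ref{cor:sq-weight-1}, invoke Lemma~\ref{lemma:BScomputation} for $q\geq 5$, and observe that higher differentials vanish. Your parametrisations of the kernels by $y\in h^{0,3}$ and $y\in h^{1,4}$ are equivalent, via multiplication by $\tau$-powers, to the paper's parametrisations by $c\in h^{0,0}$ and $\phi\in h^{1,1}$; your use of the Adem relations $\Sq^{2}\tau=\tau\Sq^{2}+\tau\rho\Sq^{1}$ and $\Sq^{2}\Sq^{2}=\tau\Sq^{3}\Sq^{1}$ to check that the second kernel condition is automatic is exactly the mechanism behind the paper's terse statement. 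One small gap: you assert $E^{1}_{4,2}=H^{0,2}=0$ without justification, whereas the paper appeals to Lichtenbaum's weight-two motivic complex for this vanishing; it does not follow from the elementary range $h^{p,q}=0$ for $p<0$ or $p>q$.
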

\begin{proof}
Note that $\Sq^{2}$ acts trivially on $h^{1,2}$ by Corollary~\ref{cor:sq-weight-1}.
In weight $2$ we look at the kernel of $\tau\circ\pr\colon H^{1,2}\rTo h^{1,3}$.
By Lemma~\ref{lem:tau-iso} this equals the kernel of $\pr$, 
i.e.,
$2H^{1,2}\subseteq H^{1,2}$.
In weight $3$, 
the kernel of the exiting differential is isomorphic to $h^{0,0}$ via $c\rMapsto (\tau\rho^{2}c,\tau^{3}c)$. 
In weight $4$, 
the kernel of the exiting differential is isomorphic to $h^{1,1}$ via $\phi\rMapsto (\tau\rho^{2}\phi,\tau^{3}\phi)$.
The entering differential surjects onto the kernel of the latter map.
In weight $5$, 
the group coincides with the corresponding group for $\KT$, 
as the change from $h^{4,4}$ to $H^{4,4}$ does not affect the spectral sequence, 
due to the triviality of the differential exiting bidegree $(4,4)$.
The remaining claims follow from Lemma \ref{lemma:BScomputation}.
\end{proof}
\begin{remark}
There are isomorphisms $H^{1,2}\cong K_{3}^{ind}(F)$;
the cokernel of $K_{3}^{M}(F)\to K_{3}(F)$, 
i.e., the $K_{3}$-group of indecomposable elements, 
and $h^{0,3}\cong\ZZ/2$.
By Lichtenbaum's weight-two motivic complex $\Gamma(2)$ \cite{Lichtenbaum} there is a short exact sequence
$
0
\to
h^{0,2}
\to
H^{1,2}
\to
2H^{1,2}
\to 
0.
$
By comparing with the forgetful map $\KQ\to\KGL$ and identifying $h^{0,2}$ with $h^{0,3}$ one concludes that $KO_{3}(F)$ is isomorphic to $K_{3}^{ind}(F)$.
\end{remark}

\begin{lemma}
There are isomorphisms
\begin{equation*}
E^{2}_{4,q}(\KQ)
\cong
\begin{cases}
0           & q=2, 3 \\
H^{4,4} & q=4 \\
h^{q,q} & q\geq 5.
\end{cases}
\end{equation*}
\end{lemma}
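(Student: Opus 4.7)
The plan is to compute $E^{2}_{4,q}(\KQ)$ directly from the $d_{1}$-differentials of Theorem~\ref{thm:diff-ko}, using the Steenrod evaluations in Corollary~\ref{cor:sq-weight-1} and the Tate isomorphism of Lemma~\ref{lem:tau-iso}, after reading $E^{1}_{4,q} = \pi_{4,0}\s_{q}(\KQ)$ off Theorem~\ref{thm:slices-ko}. Discarding summands $h^{a,b}$ or $H^{a,b}$ with $a > b$, which vanish over any field, yields $E^{1}_{4,2} = H^{0,2}$, $E^{1}_{4,3} = h^{1,3}$, $E^{1}_{4,4} = H^{4,4} \oplus h^{2,4} \oplus h^{0,4}$, and for $q \geq 5$ a larger but still explicit direct sum.

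For $q \geq 5$, Lemma~\ref{lemma:BScomputation} (case $p = 4$ even, $q \geq p+1 = 5$) identifies $E^{2}_{4,q}(\KQ)$ with $E^{2}_{4,q}(\KT)$, and Theorem~\ref{thm:e2-witt} equates the latter with $h^{q,q}$ since $4 \equiv 0 \bmod 4$. For $q = 2$, the vanishing $H^{0,2}(F) = 0$ in characteristic zero (from Lichtenbaum's weight-two motivic complex, as invoked in the preceding lemma) gives $E^{1}_{4,2} = 0$. For $q = 3$, Theorem~\ref{thm:diff-ko} (with $q \equiv 3 \bmod 4$) produces an exiting $d_{1}$ whose $\tau$-component is the isomorphism $\tau : h^{1,3} \to h^{1,4}$ of Lemma~\ref{lem:tau-iso}, so the whole differential is injective and $E^{2}_{4,3} = 0$.

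The main case $q = 4$ proceeds by evaluating the exiting and entering $d_{1}$ on the three summands of $E^{1}_{4,4}$ at relative shifts $i = 4, 2, 0$. On $H^{4,4}$ the only potentially nonzero component of $d_{1}^{\KQ}(4,4) = (0, \Sq^{2} \circ \pr, 0)$ targets $h^{6,5} = 0$; on $h^{2,4}$ the $\tau$-component of $d_{1}^{\KQ}(4,2)$ is the isomorphism $\tau : h^{2,4} \to h^{2,5}$, so $d_{1}$ is injective there; on $h^{0,4}$ every component of $d_{1}^{\KQ}(4,0)$ vanishes by Corollary~\ref{cor:sq-weight-1}, since the weight index $n = 4 \equiv 0 \bmod 4$. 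Hence $\ker d_{1} = H^{4,4} \oplus h^{0,4}$.

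Finally, the entering $d_{1}$ comes from $E^{1}_{5,3} = h^{0,3}$, which is generated by the summand of $\s_{3}(\KQ)$ at relative shift $q - 1 = 2$. Writing $a = \tau^{3} c$ with $c \in h^{0,0}$, Corollary~\ref{cor:sq-weight-1} gives $\Sq^{2} a = \rho^{2} \tau^{2} c = \rho \Sq^{1} a$, so the $h^{2,4}$-component of $d_{1}^{\KQ}(3,2) = (\delta \Sq^{2}\Sq^{1}, \Sq^{2} + \rho \Sq^{1}, \tau)$ vanishes; the remaining two components evaluate to $\delta(\rho^{3}\tau c) \in H^{4,4}$ and $\tau^{4} c \in h^{0,4}$. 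Since $\tau : h^{0,3} \to h^{0,4}$ is an isomorphism, the image is the graph of a homomorphism $h^{0,4} \to H^{4,4}$, and the quotient of $H^{4,4} \oplus h^{0,4}$ by this graph identifies with $H^{4,4}$ via projection onto the first factor, yielding $E^{2}_{4,4}(\KQ) \cong H^{4,4}$. The principal obstacle is the careful matching of Steenrod components to target summands via the relative-shift convention of Theorem~\ref{thm:diff-ko}; once this indexing is set up, each individual evaluation reduces to a straightforward application of Corollary~\ref{cor:sq-weight-1} and Lemma~\ref{lem:tau-iso}.
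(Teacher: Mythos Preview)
Your argument follows the paper's approach closely and is essentially correct, but there is one genuine gap. You invoke Lemma~\ref{lemma:BScomputation} for the range $q\geq 5$ with $p=4$, yet that lemma is stated under the hypothesis $p\leq 3$; the extension to $p>3$ requires the Beilinson--Soul\'e vanishing conjecture, which is not assumed here. The paper does \emph{not} appeal to the lemma at $q=5$: instead it treats $q=5$ by hand, observing that the only obstruction to comparing with $\KT$ is the integral summand $H^{3,4}\subset E^{1}_{5,4}(\KQ)$, and that the entering differential $\Sq^{2}\circ\pr\colon H^{3,4}\to h^{5,5}$ is trivial (since $\Sq^{2}$ vanishes on $h^{3,4}$ by Corollary~\ref{cor:sq-weight-1}). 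Only afterwards does the paper cite Lemma~\ref{lemma:BScomputation}, and for $q\geq 6$ every integral summand $H^{a,b}$ appearing in $E^{1}_{5,q-1}$, $E^{1}_{4,q}$, $E^{1}_{3,q+1}$ has $a>b$ and hence vanishes unconditionally, so the comparison with $\KT$ goes through without Beilinson--Soul\'e. You should insert the $q=5$ step explicitly.

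A smaller point: in the $q=4$ case you say the quotient $(H^{4,4}\oplus h^{0,4})/\mathrm{graph}(f)$ identifies with $H^{4,4}$ ``via projection onto the first factor.'' This is only literally the projection when $\delta(\rho^{3}\tau)=0$; in general the isomorphism is $(a,b)\mapsto a-f(b)$. The conclusion $E^{2}_{4,4}\cong H^{4,4}$ is correct either way, and the paper's proof says exactly the same thing (``the quotient can be identified with $H^{4,4}$'') without specifying the map.
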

\begin{proof}
As noted above the group $H^{0,2}$ is trivial.
Lemma~\ref{lem:tau-iso} shows the exiting differential in weight $3$ is injective.
In weight $4$, 
the kernel of the exiting differential is $H^{4,4}\directsum h^{0,4}$.
The nontrivial element in the image of the entering differential is $(\delta(\rho^{3}\tau),\tau^{4})$. 
Thus the quotient can be identified with $H^{4,4}$. 
In weight $5$, 
this follows from the case of $\KT$ because $\Sq^{2}\circ\pr\colon H^{3,4}\rTo h^{3,4}\rTo h^{5,5}$ is trivial.
Lemma \ref{lemma:BScomputation} finishes the proof.
\end{proof}

In principle one can continue with a similar analysis of the next columns.
To summarize the computations above, 
we note that in low-degrees the $E^{2}$-page takes the form:

\begin{center}
\begin{tikzpicture}[scale=1.1,font=\scriptsize,line width=1pt]
\draw[help lines] (-2.5,0) grid (4.5,6.2);
\foreach \i in {0,...,6} {\node[label=left:$\i$] at (-2.5,\i) {};}
\foreach \i in {-2,...,4} {\node[label=below:$\i$] at (\i,-.2) {};}
\foreach \i in {1,...,6} {\draw[fill] (0,\i) circle (1pt)
                                        node[above right=-1pt] {$h^{\i,\i}$};}
{\draw[fill]     
(0,0) circle (0pt) node[above right=1pt] {{$H^{0,0}$}}
;}

{\draw[fill]      
(1,1) circle (1pt) node[above right=-1pt] {$h^{0,1}$}
(1,2) circle (1pt) node[above right=-1pt] {$h^{1,2}$}
;}

{\draw[fill]     
(2,2) circle (0pt) node[above=10pt] {$\mathrm{ker}(\tau\circ\pr + \Sq^{2})$}
;}

{\draw[fill]      
(3,3) circle (1pt) node[above right=-1pt] {$h^{0,2}$}
(3,2) circle (0pt) node[above right=1pt] {$2H^{1,2}$}
;}

\foreach \i in {5,...,6} {\draw[fill] (4,\i) circle (1pt)
                                        node[above right=-1pt] {$h^{\i,\i}$};}
{\draw[fill]     
(4,4) circle (0pt) node[above right=1pt] {{$H^{4,4}$}}
;}
\node at (0,0) [shape=rectangle,draw,fill]{};
\node at (2,2) [shape=rectangle,draw]{};
\node at (3,2) [shape=rectangle,draw,fill]{};
\node at (4,4) [shape=rectangle,draw,fill]{};

\end{tikzpicture}
\end{center}

\begin{corollary}
The group $KO_{4}(F)$ surjects onto $H^{4,4}\cong K_{4}^{M}$.
If $K_{4}^{M}=0$ then $KO_{4}(F)$ is the trivial group.
\end{corollary}

The symplectic $K$-groups $KSp_{\ast}(F)$ of $F$ are the filtered target groups of the second slice spectral sequence for $\KQ$
on account of the isomorphism $\pi_{p,2}\KQ\iso KSp_{p-4}(F)$.
Computations similar to the above yields the $E^{2}$-page:

\begin{center}
\begin{tikzpicture}[scale=1.1,font=\scriptsize,line width=1pt]
\draw[help lines] (2.5,2) grid (8.5,7.2);
\foreach \i in {2,...,7} {\node[label=left:$\i$] at (2.5,\i) {};}
\foreach \i in {3,...,8} {\node[label=below:$\i$] at (\i,1.8) {};}
{\draw[fill]     
(4,2) circle (0pt) node[above right=1pt] {{$2H^{0,0}$}}
;}
{\draw[fill]     
(6,4) circle (0pt) node[above right=1pt] {$H^{2,2}$}
(6,5) circle (1pt) node[above right=-1pt] {$h^{3,3}$}
(6,6) circle (1pt) node[above right=-1pt] {$h^{4,4}$}
(6,7) circle (1pt) node[above right=-1pt] {$h^{5,5}$}
(7,4) circle (0pt) node[above right=1pt] {$H^{1,2}$}
(7,5) circle (1pt) node[above right=-1pt] {$h^{2,3}$}
(7,6) circle (1pt) node[above right=-1pt] {$h^{3,4}\! /\Sq^2$}
(7,7) circle (1pt) node[above right=-1pt] {}
(8,5) circle (1pt) node[above right=-1pt] {$\ker \Sq^2_{1,3}$}
(8,6) circle (1pt) node[above right=-1pt] {}
(8,7) circle (1pt) node[above right=-1pt] {}
;}
\node at (4,2) [shape=rectangle,draw,fill]{};
\node at (6,4) [shape=rectangle,draw,fill]{};
\node at (7,4) [shape=rectangle,draw,fill]{};
\end{tikzpicture}
\end{center}

We read off that $KSp_{0}(F)\cong 2H^{0,0}$ is infinite cyclic and $KSp_{1}(F)$ is the trivial group.
It follows that all the classes in the sixth column are infinite cycles and $E^{2}_{p,q}=E^{\infty}_{p,q}$ when $(p,q)=(6,4), (6,5)$.
Hence we obtain a surjection $KSp_{2}(F)\rTo H^{2,2}$.  
Its kernel is isomorphic to $I(F)^{3}$ as shown by Suslin \cite[\S 6]{SuslinK2}.
If $\cd_{2}(F)\leq 2$, 
so that $h^{p,q}=0$ when $p\geq 3$, 
the seventh column degenerates to a short exact sequence
$
0
\to
h^{2,3}
\to
KSp_{3}(F)
\to
H^{1,2}
\to 
0.
$

\begin{remark}
The computations in this section hold for smooth semilocal rings containing a field of characteristic zero, 
cf.~the generalization of Milnor's conjecture on quadratic forms discussed in the introduction.
\end{remark}

\appendix
\section{Maps between motivic Eilenberg-MacLane spectra}
\label{section:endom-motiv-eilenb}
Throughout this section the base scheme is essentially smooth over a field of characteristic unequal to $2$ \cite[Definition 2.9]{HKPAO}.
In the following series of results, 
we identify weight zero and weight one endomorphisms of motivic Eilenberg-MacLane spectra in $\SHH$ in terms of 
Steenrod operations $\Sq^{i}$ and the motivic cohomology classes $\rho$, $\tau$.
When the base scheme is a field of characteristic zero, 
these identifications follow from Voevodsky's work on the motivic Steenrod algebra \cite{Voevodsky:Steenrod},
while the generalization to our set-up relies on \cite{HKPAO}.
We use square brackets to denote maps in $\SHH$.
\begin{lemma} 
\begin{align*}
[\MZZ/2,\Sigma^{p,0}  \MZZ/2] 
= &
\begin{cases} 
\FF_{2} & p = 0 \\ 
\FF_{2}\{\Sq^{1}\} & p=1\\
0 & \mathrm{otherwise.}
\end{cases} 
\end{align*}
\end{lemma}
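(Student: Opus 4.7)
The plan is to identify the abelian group $[\MZZ/2,S^{p,0}\smash\MZZ/2]$ with the group of stable bidegree $(p,0)$ cohomology operations on mod-$2$ motivic cohomology, and then appeal to the structure theorem for the motivic Steenrod algebra.

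First I would recall that by Voevodsky's calculation of the motivic Steenrod algebra \cite{Voevodsky:Steenrod}, extended to the present setting of base schemes essentially smooth over a field of characteristic $\neq 2$ via \cite{HKPAO}, the mod-$2$ motivic Steenrod algebra $\mathcal{A}^{\ast,\ast}$ is a free left module over the coefficient ring $h^{\ast,\ast}$ with basis the admissible monomials $\Sq^{I}=\Sq^{i_{1}}\Sq^{i_{2}}\cdots\Sq^{i_{k}}$, where $\Sq^{i}$ has bidegree $(i,\lfloor i/2\rfloor)$. Equivalently, $[\MZZ/2,S^{\ast,\ast}\smash\MZZ/2]\iso\mathcal{A}^{\ast,\ast}$.

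Next I would enumerate the basis elements of bidegree $(p,0)$. An admissible monomial $\Sq^{I}$ contributes weight $\sum_{j}\lfloor i_{j}/2\rfloor$, while a coefficient $c\in h^{m,n}$ contributes weight $n$. Since $h^{m,n}=0$ for $n<0$ and $h^{\ast,0}$ is concentrated in degree zero (equal to $\FF_{2}$), a bidegree $(p,0)$ element must involve an admissible monomial of weight zero, forcing every $i_{j}\in\{0,1\}$. Using $\Sq^{0}=\id$ and the Adem relation $\Sq^{1}\Sq^{1}=0$, the only admissible monomials of weight zero are $\Sq^{0}$ and $\Sq^{1}$, of degrees $0$ and $1$ respectively. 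Combined with the rank-one $h^{0,0}=\FF_{2}$ of scalars, this gives the displayed formula.

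The main obstacle, as usual in arguments of this type, is ensuring the structural input on $\mathcal{A}^{\ast,\ast}$ is available in the stated generality. For a perfect field of characteristic zero this is immediate from \cite{Voevodsky:Steenrod}, but the lemma is stated for base schemes essentially smooth over a field of characteristic $\neq 2$; here one invokes the base change arguments in \cite{HKPAO}, together with Lemma~\ref{lemma:slice-ess-smooth-base-change} and Corollary~\ref{cor:slice-ess-smooth-base-change}, to transfer the computation from the prime field to the given base.
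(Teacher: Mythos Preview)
Your proposal is correct and matches the paper's approach: the paper does not give a proof of this lemma but simply records it as a consequence of Voevodsky's computation of the motivic Steenrod algebra \cite{Voevodsky:Steenrod} together with the extension in \cite{HKPAO}, which is exactly the structural input you invoke. One small remark: the references to Lemma~\ref{lemma:slice-ess-smooth-base-change} and Corollary~\ref{cor:slice-ess-smooth-base-change} are not needed here, since those concern base change for slices rather than for the Steenrod algebra; the passage from fields to essentially smooth base schemes is handled entirely within \cite{HKPAO}.
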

Recall that $\Sq^{1}$ is the canonical composite map $\MZZ/2 \rTo^{\delta} \Sigma^{1,0}  \MZZ\rTo^{\mathrm{pr}} \Sigma^{1,0}  \MZZ/2$.
\begin{lemma} 
\label{lemma:mod2tomod2weight1}
\begin{align*}
[\MZZ/2,\Sigma^{p,1}  \MZZ/2] 
= &
\begin{cases} 
\tau h^{0,0}  & p=0 \\ 
h^{1,1}\directsum h^{0,0}\{\tau\Sq^{1}\} & p=1 \\
h^{1,1}\Sq^{1} \directsum \, h^{0,0}\{\Sq^{2}\} & p=2\\
h^{0,0}\{\Sq^{2}\Sq^{1}\} \directsum h^{0,0}\{\Sq^{1}\Sq^{2}\} & p=3 \\
h^{0,0}\{\Sq^{1}\Sq^{2}\Sq^{1}\} & p=4 \\
0 & \mathrm{otherwise.}
\end{cases} 
\end{align*}
\end{lemma}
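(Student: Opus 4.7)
The plan is to identify $[\MZZ/2,S^{p,1}\smash\MZZ/2]$ with the bidegree-$(p,1)$ piece of the mod-$2$ motivic Steenrod algebra $\mathcal{A}^{\ast,\ast}$ and then read off the answer from its known module structure. By definition $[\MZZ/2,S^{p,1}\smash\MZZ/2]=\mathcal{A}^{p,1}$, and by \cite{HKPAO}---which extends Voevodsky's \cite{Voevodsky:Steenrod} to the essentially smooth setting fixed at the start of this appendix---$\mathcal{A}^{\ast,\ast}$ is free as a left $h^{\ast,\ast}$-module on the admissible monomials $\Sq^I=\Sq^{i_1}\cdots\Sq^{i_n}$ (sequences with $i_k\geq 2i_{k+1}$ and $i_n\geq 1$). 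Recalling the bidegrees $|\Sq^{2k}|=(2k,k)$ and $|\Sq^{2k+1}|=(2k+1,k)$, each admissible $\Sq^I$ of bidegree $(|I|,w(I))$ contributes the summand $h^{p-|I|,\,1-w(I)}\cdot\Sq^I$ to $\mathcal{A}^{p,1}$.

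First I would enumerate the admissible monomials whose contribution can be nonzero. Since $h^{\ast,q}=0$ for $q<0$, only $w(I)\leq 1$ matters. The weight-zero admissibles are just $\id$ and $\Sq^1$, of bidegrees $(0,0)$ and $(1,0)$. For weight one, an admissible sequence must contain exactly one entry in $\{2,3\}$ with remaining entries equal to $1$; since admissibility forces any $1$ to appear at the end with $i_{n-1}\geq 2$, the only possibilities are $\Sq^2$, $\Sq^3$, $\Sq^2\Sq^1$, $\Sq^3\Sq^1$, of bidegrees $(2,1)$, $(3,1)$, $(3,1)$, $(4,1)$.

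Next I would collect the contributions for each $p$, using the weight-zero vanishing $h^{p,0}=\FF_2$ for $p=0$ and zero otherwise (from the preceding lemma), together with the diagonal vanishing $h^{p,1}=0$ for $p\geq 2$. Adding up: $p=0$ yields $h^{0,1}\cdot\id=h^{0,1}\{\tau\}$; $p=1$ yields $h^{1,1}\cdot\id\directsum h^{0,1}\cdot\Sq^1=h^{1,1}\directsum h^{0,1}\{\tau\Sq^1\}$; $p=2$ yields $h^{1,1}\cdot\Sq^1\directsum\FF_2\{\Sq^2\}$; $p=3$ yields $\FF_2\{\Sq^3\}\directsum\FF_2\{\Sq^2\Sq^1\}$, rewritten via the Adem relation $\Sq^3=\Sq^1\Sq^2$ to match the stated decomposition; $p=4$ yields $\FF_2\{\Sq^3\Sq^1\}=\FF_2\{\Sq^1\Sq^2\Sq^1\}$; and $p\geq 5$ yields zero since no admissible monomial of the correct bidegree survives.

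The one obstacle worth flagging is the diagonal vanishing $h^{p,1}=0$ for $p\geq 2$: over a perfect field of characteristic unequal to two this is the standard fact that motivic cohomology vanishes strictly above the diagonal, but for the more general essentially smooth bases of \cite{HKPAO} one has to invoke the Beilinson--Lichtenbaum identification with \'etale cohomology in the relevant range. All remaining steps are formal consequences of the $h^{\ast,\ast}$-freeness of $\mathcal{A}^{\ast,\ast}$ together with the two displayed Adem relations $\Sq^1\Sq^2=\Sq^3$ and $\Sq^3\Sq^1=\Sq^1\Sq^2\Sq^1$.
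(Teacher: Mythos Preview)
Your argument is correct and is exactly the computation the paper leaves implicit: the appendix gives no proof for this lemma, merely pointing to \cite{Voevodsky:Steenrod} and \cite{HKPAO}, and your enumeration of admissible monomials of weight at most one together with the vanishing of $h^{\ast,\ast}$ outside the cone $0\leq p\leq q$ is precisely how one reads off $\mathcal{A}^{p,1}$ from the free $h^{\ast,\ast}$-basis.

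One small correction to your closing remark: the vanishing $h^{p,1}=0$ for $p\geq 2$ is not a Beilinson--Lichtenbaum statement (that isomorphism lives in the range $p\leq q$, which is the wrong side); it is the elementary vanishing of motivic cohomology of a \emph{field} above the diagonal. Over a general base essentially smooth over a field this fails (e.g.\ $h^{2,1}(X)\cong\Pic(X)/2$), so the lemma as stated really requires the base to be a field or semilocal, which is how the paper actually uses it in Section~\ref{section:differentialsI}.
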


\begin{lemma} 
\label{lemma:MZtoMZ/2weightzero}
\begin{align*} 
[\MZZ,\Sigma^{p,0}  \MZZ/2] 
= &
\begin{cases} 
\FF_{2}\{\mathrm{pr}\} & p=0 \\
0 & p\neq 0 
\end{cases} 
& 
[\MZZ/2,\Sigma^{p,0}  \MZZ] 
= &
\begin{cases} 
\FF_{2}\{\delta\} & p=1 \\
0 & p\neq 1. 
\end{cases}
\end{align*} 
\end{lemma}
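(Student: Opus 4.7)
The plan is to reduce both computations to the already established identification of $[\MZZ/2,S^{p,0}\smash\MZZ/2]$ from the previous lemma by means of the distinguished triangle
\[
\MZZ\rTo^{2}\MZZ\rTo^{\mathrm{pr}}\MZZ/2\rTo^{\delta}S^{1,0}\smash\MZZ,
\]
together with the two identities $\Sq^{1}=\mathrm{pr}\circ\delta$ (definition of the mod-$2$ Bockstein in $\SHH$) and $\delta\circ\mathrm{pr}=0$ (consecutive arrows of a cofiber sequence). Throughout I will exploit that multiplication by $2$ annihilates any $\Hom$-group having $\MZZ/2$ as either source or target.

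First I would treat $[\MZZ,S^{p,0}\smash\MZZ/2]$ by applying $[-,S^{p,0}\smash\MZZ/2]$ to the triangle. Since $2^{\ast}=0$ on the target, the map
\[
\mathrm{pr}^{\ast}\colon [\MZZ/2,S^{p,0}\smash\MZZ/2]\rTo [\MZZ,S^{p,0}\smash\MZZ/2]
\]
is surjective. For $p\notin\{0,1\}$ the source vanishes by the previous lemma, so the target is zero. For $p=1$ the generator satisfies $\mathrm{pr}^{\ast}(\Sq^{1})=\mathrm{pr}\circ\delta\circ\mathrm{pr}=0$, so the target again vanishes. For $p=0$ the image of $\mathrm{id}$ is $\mathrm{pr}$, which is nonzero because on $\pi_{0,0}$ it coincides with the canonical surjection $\ZZ\rTo\FF_{2}$; hence the target is $\FF_{2}\{\mathrm{pr}\}$.

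Next I would treat $[\MZZ/2,S^{p,0}\smash\MZZ]$ by applying $[\MZZ/2,-]$ to the same triangle. Since $2_{\ast}$ vanishes on both flanking terms, the long exact sequence collapses into short exact sequences
\[
0\rTo [\MZZ/2,S^{p,0}\smash\MZZ]\rTo^{\mathrm{pr}_{\ast}} [\MZZ/2,S^{p,0}\smash\MZZ/2]\rTo^{\delta_{\ast}} [\MZZ/2,S^{p+1,0}\smash\MZZ]\rTo 0
\]
for every $p\in\ZZ$. An upward induction starting at $p=2$ (where the middle term is zero) gives vanishing for all $p\geq 2$, and an analogous downward induction handles $p\leq -1$. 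For $p=1$ the middle term is $\FF_{2}\{\Sq^{1}\}$ and the right-hand term is zero, so $[\MZZ/2,S^{1,0}\smash\MZZ]\iso\FF_{2}$; since $\mathrm{pr}_{\ast}(\delta)=\mathrm{pr}\circ\delta=\Sq^{1}$, the class $\delta$ is a generator. Finally, for $p=0$ the sequence reads $0\rTo[\MZZ/2,\MZZ]\rTo\FF_{2}\{\mathrm{id}\}\rTo^{\delta_{\ast}}\FF_{2}\{\delta\}\rTo 0$, and since $\delta_{\ast}(\mathrm{id})=\delta$ is the generator one concludes $[\MZZ/2,\MZZ]=0$.

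The argument is essentially bookkeeping with two long exact sequences. No serious obstacle is anticipated; the only points requiring care are the verification that $\mathrm{pr}$ defines a nonzero element of $[\MZZ,\MZZ/2]$, which follows by inspection of its effect on $\pi_{0,0}$, and consistent use of the relations $\Sq^{1}=\mathrm{pr}\circ\delta$ and $\delta\circ\mathrm{pr}=0$ when identifying kernels and cokernels.
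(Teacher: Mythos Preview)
Your argument is correct. The paper does not actually supply a proof of this lemma; it simply asserts that the identifications in Appendix~\ref{section:endom-motiv-eilenb} follow from Voevodsky's computation of the motivic Steenrod algebra \cite{Voevodsky:Steenrod} and its extension in \cite{HKPAO}. Your approach instead reduces the statement to the preceding lemma on $[\MZZ/2,S^{p,0}\smash\MZZ/2]$ by running the long exact sequences associated to the cofiber sequence $\MZZ\rTo^{2}\MZZ\rTo\MZZ/2$, exploiting that $2$ acts trivially whenever $\MZZ/2$ appears as source or target. This is a clean, self-contained derivation; the only external input is the weight-zero piece of the mod-$2$ Steenrod algebra, which is precisely the previous lemma. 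One minor redundancy: your downward induction for $p\leq -1$ already forces $[\MZZ/2,\MZZ]=0$ (take $p=-1$ and read off the right-hand term of the short exact sequence), so the separate treatment of $p=0$ is not strictly necessary, though it does no harm.
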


\begin{lemma} 
\label{lemma:integraltomod2andmod2tointegralweight1}
\begin{align*} 
[\MZZ,\Sigma^{p,1}  \MZZ/2] 
= &
\begin{cases} 
\tau h^{0,0}\circ \mathrm{pr} & p=0 \\
h^{1,1}\circ \mathrm{pr} & p=1 \\
h^{0,0}\{\Sq^{2}\circ \mathrm{pr}\} & p=2\\
h^{0,0}\{\Sq^{1}\Sq^{2}\circ \mathrm{pr} \} & p=3 \\
0 & \mathrm{otherwise}
\end{cases} \\
[\MZZ/2,\Sigma^{p,1}  \MZZ] 
= & 
\begin{cases} 
\delta \circ \tau h^{0,0} & p=1 \\
\delta \circ h^{1,1} & p=2 \\
\FF_{2}\{\delta \circ \Sq^{2}\} & p=3\\
\FF_{2}\{\delta \circ \Sq^{2}\Sq^{1}\} & p=4 \\
0 & \mathrm{otherwise.}
\end{cases} 
\end{align*} 
\end{lemma}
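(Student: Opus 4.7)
The plan is to exploit the fundamental distinguished triangle
\[
\MZZ\rTo^{2}\MZZ\rTo^{\pr}\MZZ/2\rTo^{\delta}S^{1,0}\smash\MZZ
\]
in $\SHH$, and apply $\Hom$-functors to reduce both computations to the known description of $[\MZZ/2, S^{p,1}\smash\MZZ/2]$ from Lemma~\ref{lemma:mod2tomod2weight1}.

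For the first family, I would apply the contravariant functor $[-,S^{p,1}\smash\MZZ/2]$ to the triangle. Since the target is $2$-torsion, multiplication by $2$ on all the resulting $\Hom$-groups vanishes, so the long exact sequence splits into short exact sequences
\[
0\rTo [\MZZ, S^{p-1,1}\smash\MZZ/2]/\Ker(\delta^{*})\rTo [\MZZ/2, S^{p,1}\smash\MZZ/2]\rTo^{\pr^{*}}[\MZZ, S^{p,1}\smash\MZZ/2]\rTo 0.
\]
The map $\pr^{*}$ is precomposition with $\pr$, so every generator of $[\MZZ, S^{p,1}\smash\MZZ/2]$ has the form $\alpha\circ\pr$ for some $\alpha\in[\MZZ/2,S^{p,1}\smash\MZZ/2]$. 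Using the identity $\Sq^{1}=\pr\circ\delta$, one checks that $\delta^{*}(\alpha\circ\pr)=\alpha\circ\Sq^{1}$. An induction on $p$ starting from the trivial range then reads off each of the five cases directly from Lemma~\ref{lemma:mod2tomod2weight1}; for instance, at $p=3$ the summand $\FF_{2}\{\Sq^{2}\Sq^{1}\}$ of $[\MZZ/2,S^{3,1}\smash\MZZ/2]$ is the image of $\Sq^{2}\circ\pr$ under $\delta^{*}$, leaving the quotient $\FF_{2}\{\Sq^{1}\Sq^{2}\circ\pr\}$.

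For the second family, apply the covariant functor $[\MZZ/2,-]$ to the analogous triangle smashed with $S^{p,1}$. Every map out of $\MZZ/2$ is $2$-torsion, so the long exact sequence again collapses to
\[
0\rTo[\MZZ/2,S^{p,1}\smash\MZZ]\rTo^{\pr_{*}}[\MZZ/2,S^{p,1}\smash\MZZ/2]\rTo^{\delta_{*}}[\MZZ/2, S^{p+1,1}\smash\MZZ]\rTo 0.
\]
This identifies $[\MZZ/2, S^{p,1}\smash\MZZ]$ with $\Ker(\delta_{*})$ and determines everything by downward induction starting from $p\geq 5$, where all groups vanish. The generators take the form $\delta\circ\alpha$: for instance $\delta\Sq^{2}\Sq^{1}$ at $p=4$; then $\delta\Sq^{2}$ at $p=3$ lifts $\Sq^{1}\Sq^{2}$ (since $\delta\Sq^{2}\Sq^{1}\neq 0$ while $\delta\Sq^{1}\Sq^{2}=0$ because $\delta\Sq^{1}=0$); and $\delta\tau$ at $p=1$.

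The main obstacle is the $p=2$ case of the second formula, where the description involves a genuine quotient rather than a direct sum. The subtle point is that the Adem relation $\Sq^{1}\tau=\tau\Sq^{1}+\rho$ combined with $\Sq^{1}=\pr\circ\delta$ gives
\[
\pr_{*}(\delta\tau)=\Sq^{1}\tau=\tau\Sq^{1}+\rho
\]
in $[\MZZ/2,S^{1,1}\smash\MZZ/2]$. Postcomposing with $\Sq^{1}$ and using $\delta\circ\pr=0$ and $\Sq^{1}\Sq^{1}=0$ forces the identity $\delta\rho=(\delta\tau)\circ\Sq^{1}$ in $[\MZZ/2,S^{2,1}\smash\MZZ]$, which is precisely the relation $(\rho,\tau\Sq^{1})\sim 0$ built into the quotient. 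All remaining computations reduce to routine applications of the Adem relations from Section~\ref{section:milnorconjecture}, together with the bidegree vanishing $h^{p,q}=0$ for $p>q$, which kills the Cartan correction $\Sq^{1}(a)$ for $a\in h^{1,1}$ and thereby forces $\delta_{*}(a\Sq^{1})=(\delta a)\Sq^{1}$ to lie in the zero component of the target in the appropriate range.
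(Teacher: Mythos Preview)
Your argument is correct. The paper itself gives no proof of this lemma: the appendix simply states that all four lemmas on weight $\leq 1$ maps between motivic Eilenberg--MacLane spectra ``follow from Voevodsky's work on the motivic Steenrod algebra'' \cite{Voevodsky:Steenrod}, with the extension to essentially smooth base schemes handled by \cite{HKPAO}. In other words, the paper treats Lemma~\ref{lemma:integraltomod2andmod2tointegralweight1} as a direct read-off from the known structure of the motivic Steenrod algebra and does not record a derivation.

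Your route is therefore more explicit than, rather than genuinely different from, what the paper does: you reduce the integral statements to the mod-$2$ statement of Lemma~\ref{lemma:mod2tomod2weight1} via the Bockstein triangle $\MZZ\to^{2}\MZZ\to\MZZ/2\to S^{1,0}\smash\MZZ$, which is exactly the standard mechanism by which these computations are extracted from Voevodsky's work in practice. The advantage of writing it out as you have is that it makes transparent why the generators take the form $\alpha\circ\pr$ and $\delta\circ\alpha$, and in particular it explains the origin of the relation $(\rho,\tau\Sq^{1})\sim 0$ at $p=2$: it is forced by $\delta\circ\pr=0$ together with $\pr_{*}(\delta\tau)=\Sq^{1}\tau=\tau\Sq^{1}+\rho$. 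Your verification that $\delta_{*}(a\Sq^{1})=0$ for $a\in h^{1,1}$ (via $\Sq^{1}(a)\in h^{2,1}=0$ and injectivity of $\pr_{*}$) is the one step that requires a little care, and you have it right; the downward induction for the second family and the upward induction for the first family both terminate correctly in the stated vanishing ranges.
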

Here $\Sq^{1}(\tau)=\rho$, $\Sq^{2}(\tau)=0$ and $\Sq^{2}(\tau^{2})=\tau\rho^{2}$.
It follows that
\begin{align*}
\Sq^{1}(\tau^n)
= &
\begin{cases} 
\rho\tau^{n-1} & n\equiv 1\bmod 2\\
0              & n\equiv 0\bmod 2 
\end{cases}
&
\Sq^{2}(\tau^n) 
= & 
\begin{cases} 
\rho^{2} \tau^{n-1} & n\equiv 2,3\bmod 4 \\
0                   & n\equiv 0,1\bmod 4. 
\end{cases}
\end{align*}

\begin{proposition}\label{prop:sum-product}
For every subset $A\subseteq \ZZ$ there is a canonical weak equivalence 
\[ 
\alpha\colon
\bigvee_{i\in A} \Sigma^{i,0}  \MZZ/2 
\rTo 
\prod_{i\in A} \Sigma^{i,0}  \MZZ/2.
\]
\end{proposition}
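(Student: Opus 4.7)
The plan is to verify that $\alpha$ is a weak equivalence by showing that it induces an isomorphism on $[S^{p,q}\smash X_+,-]$ for every smooth $S$-scheme $X$ and every bidegree $(p,q)$, since the objects $\{S^{p,q}\smash X_+\}$ form a compact generating set for $\SHH(S)$.

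Compactness of $S^{p,q}\smash X_+$ identifies
\[
[S^{p,q}\smash X_+,\bigvee_{i\in A} S^{i,0}\smash \MZZ/2]
\iso
\bigdirectsum_{i\in A} [S^{p,q}\smash X_+, S^{i,0}\smash \MZZ/2],
\]
while the universal property of the product gives
\[
[S^{p,q}\smash X_+,\prod_{i\in A} S^{i,0}\smash \MZZ/2]
\iso
\prod_{i\in A} [S^{p,q}\smash X_+, S^{i,0}\smash \MZZ/2].
\]
Each factor identifies with the mod-$2$ motivic cohomology group $H^{i-p,-q}(X;\ZZ/2)$, and under these identifications $\alpha_{\ast}$ becomes the canonical injection of the direct sum into the direct product.

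Next I would invoke the standard vanishing of mod-$2$ motivic cohomology of smooth schemes: $H^{a,b}(X;\ZZ/2)$ is trivial unless $b\geq 0$ and $0\leq a\leq 2b$, the latter coming from Bloch's higher Chow interpretation. Applied to the pair $(a,b)=(i-p,-q)$, this bounds $i$ to the finite interval $p\leq i\leq p-2q$ (empty when $q>0$), so for each fixed triple $(p,q,X)$ only finitely many $i\in A$ contribute nontrivially. A finite direct sum coincides with the corresponding direct product, so $\alpha_{\ast}$ is a bijection and hence $\alpha$ is a weak equivalence.

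The main obstacle is securing the required vanishing range for mod-$2$ motivic cohomology over a base essentially smooth over a field of characteristic unequal to two, which is the setting assumed in Appendix \ref{section:endom-motiv-eilenb}. Over a field the statement is classical, and in the more general setting the necessary structural properties of $\MZZ/2$ are recorded in \cite{HKPAO}. Once the finite-range vanishing is in hand, the remainder of the argument is purely formal.
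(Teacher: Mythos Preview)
Your argument is correct and matches the paper's proof essentially line for line: reduce to the compact generators $S^{p,q}\smash X_+$, identify the induced map as the canonical inclusion of a direct sum of mod-$2$ motivic cohomology groups into the corresponding product, and invoke the vanishing range $0\leq a\leq 2b$ for $h^{a,b}(X)$ to see that only finitely many summands survive. The paper cites Suslin--Voevodsky for the vanishing over a field and \cite{HKPAO} for the essentially smooth case, just as you do.
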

\begin{proof}
It suffices to show $[\Sigma^{p,q}  X_{+},\alpha]$ is an isomorphism for all $p,q\in\ZZ$ and $X\in\Sm_{F}$. 
This is the canonical map
\[ 
\bigdirectsum_{i\in A}h^{i-p,-q}(X_{+}) 
\rTo 
\prod_{i\in A} h^{i-p,-q}(X_{+}). 
\]
Work of Suslin-Voevodsky \cite{Suslin-Voevodsky:banff} shows the group $h^{i-p,q}(X_{+})$ is nonzero only if $0\leq i-p \leq 2q$,
see \cite[Corollary 2.14]{HKPAO} for base schemes essentially smooth over a field.
\end{proof}

\begin{footnotesize}


\end{footnotesize}
\vspace{0.1in}

\begin{center}
Institut f\"ur Mathematik, Universit\"at Osnabr\"uck, Germany.\\
e-mail: oroendig@uni-osnabrueck.de
\end{center}
\begin{center}
Department of Mathematics, University of Oslo, Norway.\\
e-mail: paularne@math.uio.no
\end{center}

\end{document}